\definecolor{violet}{rgb}{0.0,0.2,0.7}
\definecolor{rouge2}{rgb}{0.8,0.0,0.2}
\theoremstyle{plain}
    \newtheorem{thm}{Theorem}[section]
	\newtheorem{lem}[thm]{Lemma}
	\newtheorem{prop}[thm]{Proposition}
\theoremstyle{plain}
	\newtheorem{bigthm}{Theorem}
	\newtheorem{bigcor}[bigthm]{Corollary}
	\newenvironment{taggedbigcond}[1]
    {\taggedbigcondx}
    {\endtaggedbigcondx}
	\newenvironment{taggedbigset}[1]
    {\taggedbigsetx}
    {\endtaggedbigsetx}
    \newtheorem*{bigrmk*}{Remark}
\theoremstyle{definition}
	\newtheorem{defn}[thm]{Definition}
    \newtheorem*{defn*}{Definition}
	\newtheorem{eg}[thm]{Example}
	\newtheorem{sett}[thm]{Setting}
    \newtheorem*{conj*}{Conjecture}
	\newtheorem*{claim*}{Claim}
	\newtheorem*{ack*}{Acknowledgements}
\theoremstyle{remark}
	\newtheorem{rmk}[thm]{Remark}
	\newtheorem*{rmk*}{Remark}
	\newtheorem*{ques*}{Question}
	\newtheorem*{ans*}{Answer}
\numberwithin{equation}{section}
\newlist{steps}{enumerate}{1}
\setlist[steps, 1]{label = Step \arabic*:}
\DeclareMathSymbol{\lsb@l}{\mathalpha}{letters}{`l}
\DeclareFontFamily{U}{MnSymbolC}{}
\DeclareSymbolFont{MnSyC}{U}{MnSymbolC}{m}{n}
\DeclareFontShape{U}{MnSymbolC}{m}{n}{
	<-6>  MnSymbolC5
	<6-7>  MnSymbolC6
	<7-8>  MnSymbolC7
	<8-9>  MnSymbolC8
	<9-10> MnSymbolC9
	<10-12> MnSymbolC10
	<12->   MnSymbolC12}{}
\DeclareMathSymbol{\intprod}{\mathbin}{MnSyC}{'270}
\DeclareMathOperator{\Hom}{Hom}
\DeclareMathOperator{\im}{im}
\DeclareMathOperator{\tr}{tr}
\DeclareMathOperator{\Ric}{Ric}
\DeclareMathOperator{\ext}{ext}
\DeclareMathOperator{\PH}{PH}
\DeclareMathOperator{\PSH}{PSH}
\DeclareMathOperator{\MA}{MA}
\DeclareMathOperator{\Bl}{Bl}
\DeclareMathOperator{\Exc}{Exc}
\DeclareMathOperator{\Spec}{Spec}
\DeclareMathOperator{\Aut}{Aut}
\DeclareMathOperator{\Bisec}{Bisec}
\DeclareMathOperator{\Scal}{S}
\DeclareMathOperator{\Lie}{Lie}
\DeclareMathOperator{\red}{red}
\DeclareMathOperator{\QPSH}{QPSH}
\def\1{\mathds{1}}
\def\E{\mathbf{E}}
\def\H{\mathbf{H}}
\def\M{\mathbf{M}}
\newcommand{\ii}{\mathrm{i}}
\newcommand{\iii}{\sqrt{-1}}
\newcommand{\loc}{\mathrm{loc}}
\newcommand{\nmlz}{{\mathrm{norm}}}
\newcommand{\wX}{{\widetilde{X}}}
\newcommand\sm{\sigma}
\newcommand\dt{\delta}
\newcommand\vep{\varepsilon}
\newcommand\vph{\varphi}
\newcommand\om{\omega}
\newcommand\ta{\theta}
\newcommand\gm{\gamma}
\newcommand\af{\alpha}
\newcommand\bt{\beta}
\newcommand\ld{\lambda}
\newcommand\Dt{\Delta}
\newcommand\Om{\Omega}
\newcommand\Gm{\Gamma}
\newcommand\Ld{\Lambda}
\newcommand\Ta{\Theta}
\newcommand\RSL{\mathrm{SL}}
\newcommand\BN{\mathbb{N}}
\newcommand\BQ{\mathbb{Q}}
\newcommand\BR{\mathbb{R}}
\newcommand\BC{\mathbb{C}}
\newcommand\BD{\mathbb{D}}
\newcommand\CB{\mathcal{B}}
\newcommand\CC{\mathcal{C}}
\newcommand\CE{\mathcal{E}}
\newcommand\CH{\mathcal{H}}
\newcommand\CL{\mathcal{L}}
\newcommand\CO{\mathcal{O}}
\newcommand\CX{\mathcal{X}}
\newcommand\CY{\mathcal{Y}}
\newcommand\CZ{\mathcal{Z}}
\newcommand\lt{\left}
\newcommand\rt{\right}
\newcommand\pl{\partial}
\newcommand\db{\bar{\partial}}
\newcommand\ddb{\partial \bar{\partial}}
\newcommand\dd{d}
\newcommand\dc{d^c}
\newcommand\ddc{dd^c}
\newcommand\abs[1]{\left\lvert {#1} \right\rvert}
\newcommand\w{\wedge}
\newcommand\reg{\mathrm{reg}}
\newcommand\sing{\mathrm{sing}}
\newcommand\set[2]{\left\{ {#1} \, \middle| \, {#2} \right\}}
\newcommand{\RN}[1]{\textup{\uppercase\expandafter{\romannumeral#1}}}
\newsavebox{\@brx}
\newcommand{\llangle}[1][]{\savebox{\@brx}{\(\m@th{#1\langle}\)}%
  \mathopen{\copy\@brx\kern-0.5\wd\@brx\usebox{\@brx}}}
\newcommand{\rrangle}[1][]{\savebox{\@brx}{\(\m@th{#1\rangle}\)}%
  \mathclose{\copy\@brx\kern-0.5\wd\@brx\usebox{\@brx}}}
\title{Weighted cscK metrics on K\"ahler varieties}
\author{Chung-Ming Pan}
\address[Chung-Ming Pan]{Centre interuniversitaire de recherches en g\'eom\'etrie et topologie (CIRGET); Universit\'e du Qu\'ebec \`a Montr\'eal; Case postale 8888, Succursale centre-ville, Montr\'eal, Qu\'ebec, H3C 3P8, Canada}
\email{\href{mailto:pan.chung_ming@uqam.ca}{pan.chung\_ming@uqam.ca} \qquad\qquad\qquad\qquad\qquad\qquad\qquad\qquad\qquad\qquad\qquad\qquad\qquad\qquad}
\urladdr{\href{https://chungmingpan.github.io/}{https://chungmingpan.github.io/}}
\author{Tat Dat T\^o}
\address[Tat Dat T\^o]{Sorbonne Université, Université Paris Cité, CNRS, IMJ-PRG, F-75005 Paris, France}
\email{\href{mailto:tat-dat.to@imj-prg.fr}{tat-dat.to@imj-prg.fr} \qquad\qquad\qquad\qquad\qquad\qquad\qquad\qquad\qquad\qquad\qquad}
\urladdr{\href{https://sites.google.com/site/totatdatmath/}{https://sites.google.com/site/totatdatmath/}}
\date{\today}
\subjclass{53C55, 32J27, 32Q20, 32W20, 35A23}
\keywords{Weighted cscK metric, Log terminal singularities, A priori estimates}
\begin{document} 

\maketitle

\begin{abstract} 
We study the weighted constant scalar curvature K\"ahler equations on mildly singular K\"ahler varieties.
Assuming the existence of a suitable resolution of singularities, we establish the existence of singular weighted cscK metrics when the weighted Mabuchi functional is coercive for an extremal weight. This extends the works of Chen-Cheng and He to the singular weighted setting. 
Moreover, we provide a method for constructing examples of singular cscK metrics inspired by the work of Arezzo--Pacard. 
In contrast to the usual gluing techniques, our approach does not require a precise understanding about of the metric behavior near the singular locus.  
\end{abstract}

\tableofcontents

\section*{Introduction}
The constant scalar curvature K\"ahler (cscK) metric problem has become one of the central focus in K\"ahler geometry during the last decades. The
Yau--Tian--Donaldson conjecture asserts that given a compact K\"ahler manifold with a fixed K\"ahler class, the existence of cscK metrics in the K\"ahler class is equivalent to an algebro-geometric notion called "K-stability". 

\smallskip
Several progresses in the literature \cite{Darvas_Rubinstein_2017, Berman_Darvas_Lu_2020, Chen_Cheng_2021_1, Chen_Cheng_2021_2} have shown that the existence of a unique cscK metric in a K\"ahler class is equivalent to the coercivity of the Mabuchi functional, whose minimizers are cscK metrics. 
Boucksom--Hisamoto--Jonsson~\cite{Boucksom_Hisamoto_Jonsson_2019} demonstrated that the coercivity of the Mabuchi functional implies uniform K-stability (see \cite{Dervan_Ross_2017, Dervan_2018, SD_2018, SD_2020} for a transcendental setup). 
Conversely, C. Li~\cite{Chi_Li_2022_2} (and the recent transcendental version by Mesquita-Piccione~\cite{Piccione_2024}) showed that the strong uniform K-stability implies the coercivity of Mabuchi functional. The remaining challenge in proving the uniform Yau–Tian–Donaldson conjecture lies in establishing strong uniform K-stability from uniform K-stability.
Very recently, Boucksom--Jonsson \cite{Boucksom_Jonsson_2025} and Darvas--Zhang \cite{Darvas_Zhang_2025} independently proved different versions of the Yau--Tian--Donaldson correspondence.

\smallskip
Typical examples of cscK metrics are K\"ahler--Einstein metrics. 
Motivated by Minimal Model Program and moduli theory, K\"ahler--Einstein metrics have been well studied on smooth and mildly singular K\"ahler varieties \cite{Aubin_1978, Yau_1978, EGZ_2009, Chen_Donaldson_Sun_2015, BBEGZ_2019, BBJ_2021, Li_Tian_Wang_2021, Li_Tian_Wang_2022, Chi_Li_2022} and their families \cite{Koiso_1983, Rong_Zhang_2011, Ruan_Zhang_2011, Spotti_Sun_Yao_2016, Li_Wang_Xu_2019, DGG2023, Pan_Trusiani_2023} etc. 

\smallskip
However, there are very few results regarding 
cscK metrics in the singular setting. 
We shall focus on 
the analytic part of the Yau--Tian--Donaldson conjecture on mildly singular varieties, particularly the relation between the existence of singular cscK metrics and the coercivity of the Mabuchi functional and explore under the weighted formalism. 
In a recent joint work with Trusiani \cite{Pan_To_Trusiani_2023}, when the Mabuchi functional is coercive, we establish the existence of singular cscK metrics on {\it $\BQ$-Gorenstein smoothable} K\"ahler varieties with log terminal singularities. 
One of the key ingredients is the stability of the coercivity of the Mabuchi functional \cite[Thm.~A]{Pan_To_Trusiani_2023}. 
A similar strategy for establishing openness of coercivity has been applied to the resolution setting by Boucksom--Jonsson--Trusiani \cite{Boucksom_Jonsson_Trusiani_2024} under an appropriate condition on the resolution. 

\smallskip
This article aims to remove the additional smoothable assumption and to investigate existence results in a more general weighted setting. 
The weighted framework introduced by Lahdili \cite{Lahdili_2019} (see
also \cite{Inoue_2022}) includes various notions of canonical K\"ahler metrics, for example, extremal K\"ahler metrics and K\"ahler--Ricci solitons. 
For further results on the existence of weighted cscK metrics, we refer to \cite{Apostolov_Jubert_Lahdili_2023, Lahdili_23, DiNezza_Jubert_Lahdili_2024, DiNezza_Jubert_Lahdili_2024_b, Han_Liu_2024} and the references therein. 

\smallskip
We quickly review the basic setup and notations for the weighted cscK metrics below (see Section~\ref{sec:prelim} for more details).

\begin{taggedbigset}{(GS)}\label{bigset:GS}
Let $X$  be an $n$-dimensional compact K\"ahler variety with log terminal singularities, let $T \subset \Aut_{\red}(X)$ be a maximal real torus in the reduced automorphism group, and let $\om$ be a $T$-invariant K\"ahler metric on $X$. 
Denote by $T_\BC$ the complexification of $T$,  $\frak{t}$ the Lie algebra of $T$ and $m_\om: X \to \mathfrak{t}^\vee$ a moment map associated to $\om$. 
Consider functions $v,w \in \CC^\infty(\frak{t}^\vee, \mathbb{R})$ with $v > 0$ on $m_\om(X)$. 
\end{taggedbigset}

A metric $\om'$ is a singular {\it $(v, w)$-cscK metric} if it has locally bounded potentials on $X$ and it solves the weighted cscK equation $\Scal_v(\omega')=w(m_{\om'})$ on $X^\reg$, where $\Scal_v$ is the weighted scalar curvature. 
Such an equation is the Euler--Lagrange equation of the $(v,w)$-weighted Mabuchi functional $\M_{v,w}$.
When $w = \ell w_0$, where $w_0 > 0$ on \( m_\omega(X) \) and \( \ell \) is an affine function such that the Mabuchi functional \( \M_{v,w} \) is $T_\BC$-invariant, the corresponding \( (v, w) \)-cscK metrics are called {\it\( (v, w_0) \)-extremal metrics}. 
In this context, \( w \) is referred to as an {\it extremal weight}. The cscK metrics and extremal K\"ahler metrics correspond to the special case of $ (1,1) $-extremal metrics.

\smallskip
Before presenting our results, we introduce the following additional condition:

\begin{taggedbigcond}{(A)}\label{cond_A}
There exists a $T$-equivariant resolution of singularities $\pi: Y \to X$ such that $Y$ is K\"ahler and $\pi$ is an isomorphism over $\pi^{-1}(X^\reg)$. 
Also, there exist a K\"ahler metric $\om_Y$ on $Y$, a positive constant $K_1 > 0$, and a 
function $\rho_1 \in \QPSH(Y)$ such that
\begin{equation*}
    \Ric(\omega_Y) \geq -K_1 (\pi^\ast \om + \ddc \rho_1) 
    \quad\text{and}\quad 
    \int_Y e^{-K_1 \rho_1} \omega^n_Y< +\infty.
\end{equation*}
\end{taggedbigcond}

In the above notation, $\QPSH(Y)$ is the set of all quasi-plurisubharmonic functions on $Y$.
A resolution of singularities described in Condition \ref{cond_A} is referred to as 
{\it a resolution of Fano type} in \cite{Boucksom_Jonsson_Trusiani_2024} (see Section \ref{sect_examples} and \cite[Sec.~4.1]{Boucksom_Jonsson_Trusiani_2024} for further discussions and examples). 

\smallskip
Under Condition~\ref{cond_A}, we establish the following existence theorem for singular weighted cscK metrics: 

\begin{bigthm}
\label{bigthm_existence_0}
Let $(X,\om)$ be a compact K\"ahler variety with log terminal singularities that satisfies Setting~\ref{bigset:GS}. 
Assume that $(X,\om)$ satisfies Condition~\ref{cond_A},  $v$ is $\log$-concave and $w$ is an extremal weight.
If the weighted Mabuchi functional $\M_{v,w}$ is $T_\BC$-coercive,
then $X$ admits a singular $(v,w)$-cscK metric in $\{\om\}$, which also minimizes $\M_{v,w}$. 
\end{bigthm}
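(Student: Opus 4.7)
The plan is to follow the scheme outlined for Theorem~\ref{bigthm:cscK}, adapted to the weighted framework. Fix a $T$-equivariant resolution $\pi: Y \to X$ as in Condition~\ref{cond_A} and consider the family of $T$-invariant K\"ahler metrics $\om_\vep := \pi^\ast \om + \vep \om_Y$ on $Y$ for $\vep \in (0,1]$. Since $\pi$ is $T$-equivariant, each $\om_\vep$ admits a moment map $m_{\om_\vep}: Y \to \frak{t}^\vee$, and the weighted scalar curvature equation $S_v(\om_{\vep,\vph_\vep}) = w(m_{\om_{\vep,\vph_\vep}})$ on $Y$ translates, as in \eqref{eq:cscK_vep}, into a coupled Monge--Amp\`ere system for a pair $(\vph_\vep, F_\vep)$, with the second equation now twisted by derivatives of $v$ and $w$ along the moment map.

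The first step is to establish openness of $T_\BC$-coercivity of the weighted Mabuchi functional under the resolution: the $T_\BC$-coercivity of $\M_{v,w}$ on $X$ implies the uniform $T_\BC$-coercivity of the corresponding weighted Mabuchi functionals $\M_{v,w,\om_\vep}$ on $Y$ for all sufficiently small $\vep > 0$. This should follow by combining the weighted energy formalism of \cite{Lahdili_2019, Boucksom_Jonsson_Trusiani_2024} with the openness argument of \cite[Thm.~A]{Boucksom_Jonsson_Trusiani_2024}; the main inputs are continuity of the weighted energy functionals under pullback by $\pi$ together with the integrability provided by Condition~\ref{cond_A}. With uniform coercivity in hand, existence of a smooth $T$-invariant $(v,w)$-cscK metric in $\{\om_\vep\}$ on $Y$ for each fixed $\vep$ follows from the weighted analog of Chen--Cheng's theorem under the concavity of $\log v$, established in \cite{Han_Liu_2024, DiNezza_Jubert_Lahdili_2024, DiNezza_Jubert_Lahdili_2024_b}.

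The next and main step is to upgrade Theorem~\ref{thm_weak_solution_cscK} to the weighted setting, yielding uniform a priori estimates on $(\vph_\vep, F_\vep)$. The uniform $T_\BC$-coercivity of $\M_{v,w,\om_\vep}$ provides a uniform bound on the weighted entropy $\int_Y \log\left(\om_{\vep,\vph_\vep}^n / \om_Y^n\right) v(m_{\om_{\vep,\vph_\vep}}) \om_{\vep,\vph_\vep}^n$, and since $v$ is bounded above and below on the compact image $m_{\om_\vep}(Y)$ uniformly in $\vep$, one recovers a uniform bound on the unweighted entropy of $\vph_\vep$. The $L^\infty$-estimate on $\vph_\vep$ via the Guo--Phong auxiliary Monge--Amp\`ere method \cite{Guo_Phong_2022, Guo-Phong-Tong2023} is insensitive to the weight, so it carries over verbatim using Condition~\ref{cond_A}. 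The integral Laplacian estimate adapted from \cite[Thm.~3.1]{Chen_Cheng_2021_1} and \cite[Thm.~4.1]{Deruelle_DiNezza_2022} requires the concavity of $\log v$ in order to produce the correct sign in the maximum principle for the linearized weighted scalar curvature operator; this is precisely where the hypothesis on $\log v$ is used. Once $\tr_{\om_{\vep,\vph_\vep}} \om_Y$ is controlled in $L^p$ on $Y$ and pointwise on compact subsets of $Y \setminus E$, the local higher-order Chen--Cheng estimates propagate as in Theorem~\ref{thm_weak_solution_cscK}.

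With these uniform estimates, a subsequence of $\vph_\vep$ converges in $\CC^\infty_\loc(Y \setminus E)$ to some $\vph_0 \in \PSH(Y, \pi^\ast \om) \cap L^\infty(Y)$ satisfying the weighted cscK system on $Y \setminus E$. Since $\pi$ is a biholomorphism over $X^\reg$, $\vph_0$ descends to a bounded $\om$-psh function on $X$, smooth on $X^\reg$, and $\om + \ddc \vph_0$ is the desired singular $(v,w)$-cscK metric. The minimizing property follows from lower semicontinuity of $\M_{v,w}$ along the approximation, combined with the fact that each $\om_{\vep,\vph_\vep}$ minimizes $\M_{v,w,\om_\vep}$ on $Y$. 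The main obstacle I expect is the weighted Laplacian estimate: tracking the interaction between the weight $v(m_{\om_{\vep,\vph_\vep}})$ and the degeneration $\vep \to 0$ in the Chen--Cheng style integral inequalities is delicate, and the concavity of $\log v$ is precisely what makes the requisite convexity arguments go through.
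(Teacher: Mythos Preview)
Your outline matches the paper's proof (carried out as Theorem~\ref{thm_existence}) essentially step for step: openness of coercivity via \cite{Boucksom_Jonsson_Trusiani_2024}, smooth existence on $(Y,\om_\vep)$ via \cite{DiNezza_Jubert_Lahdili_2024_b,Han_Liu_2024}, uniform entropy control, the weighted a priori estimates of Theorem~\ref{thm_est_cscK_general_2}, and extraction of a $\CC^\infty_\loc$ limit on $Y\setminus E$.

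Two places where your sketch is thinner than the actual argument are worth flagging. First, on $(Y,\om_\vep)$ the paper does not solve $S_v=w(m_\vph)$ directly but rather $S_v=(w\ell^{\ext}_\vep)(m_\vph)$, where $\ell^{\ext}_\vep$ is the extremal affine function for $\om_\vep$ and $\ell^{\ext}_\vep\to\ell^{\ext}$ in $\frak t\oplus\BR$; this is needed so that the perturbed Mabuchi functional $\M^{\rm rel}_\vep$ is $T_\BC$-invariant on $(Y,\om_\vep)$, without which the cited smooth existence theorems and the openness result of \cite{Boucksom_Jonsson_Trusiani_2024} do not directly apply. Second, your one-line treatment of the minimizing property hides real work: lower semicontinuity gives $\M^{\rm rel}_X(\psi_0)\le\liminf_\vep\M^{\rm rel}_\vep(\vph_\vep)$, and minimality of $\vph_\vep$ gives $\M^{\rm rel}_\vep(\vph_\vep)\le\M^{\rm rel}_\vep(\,\cdot\,)$, but to conclude $\M^{\rm rel}_X(\psi_0)\le\M^{\rm rel}_X(u)$ for an arbitrary $u\in\CE^{1,T}_\om$ one must manufacture approximants $u_{j,\vep}\in\PSH(Y,\om_\vep)^T$ with $\M^{\rm rel}_\vep(u_{j,\vep})\to\M^{\rm rel}_X(u)$. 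The paper does this by approximating $u$ by bounded continuous $T$-invariant potentials $u_j$ with converging entropies, then solving auxiliary Monge--Amp\`ere equations on $Y$ to obtain monotone uniformly bounded families $u_{j,\vep}\searrow\pi^\ast u_j$, and invoking Lemma~\ref{lem_convergence_Mabuchi_smt} for the convergence of Mabuchi energies.
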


For cscK and extremal metrics, the weight $v \equiv 1$ on $\frak{t}^\vee$ (hence $\log$-concave). 
In the smooth setup, existence results under the coercivity of the Mabuchi functional are obtained by \cite{Chen_Cheng_2021_2} and \cite{He_2019} for cscK and extremal metrics, respectively. 
Our Theorem~\ref{bigthm_existence_0}, in particular, extends their results to singular settings. 
We state a direct corollary for the case of cscK metrics, corresponding $v=1$ and $w = \frac{4 \pi n c_1(X) \cdot [\om]^{n-1}}{[\om]^n}$ in the following.
\begin{bigcor}\label{bigthm:cscK}
Let $X$ be a compact K\"ahler variety with log terminal singularities and let $T \subset \Aut_{\red}(X)$ be a maximal real torus. 
Assume that $\omega$ is a $T$-invariant K\"ahler metric and that $(X, \omega)$ satisfies Condition~\ref{cond_A}. 
If the Mabuchi functional $\M_{\omega}$ on $X$ is $T_\mathbb{C}$-coercive, then $X$ admits a singular cscK metric in $\{\om\}$  which also minimizes $\M_\om$. 
In particular, if $\Aut(X)$ is discrete, the coercivity of the Mabuchi functional implies the existence of singular cscK metrics in $\{\om\}$. 
\end{bigcor}

The strategy for proving Theorem~\ref{bigthm_existence_0} is to establish uniform a priori estimates for the solutions to a family of  
{weighted} cscK equations on the resolution of singularities $\pi: Y \to X$.
Denote by $E$ the exceptional set of $\pi$. 
Consider a family of perturbed K\"ahler metrics $\om_\vep := \pi^\ast \om + \vep \om_Y$ on the resolution for $\vep \in (0,1]$. 
Under Condition~\ref{cond_A},  and assuming $w$ is an extremal weight,   \cite[Thm.~A]{Boucksom_Jonsson_Trusiani_2024}  has proved the openness of uniform coercivity on $Y$. 
By \cite{DiNezza_Jubert_Lahdili_2024_b, Han_Liu_2024}, there exists a K\"ahler metric $\om_{\vep,\vph_\vep} := \om_\vep + \ddc \vph_\vep$ solving the weighted cscK equation on $Y$.  
Then the uniform coercivity of weighted Mabuchi functional yields a uniform control on the entropy $\H_\vep(\varphi_\vep)$.  Under Condition \ref{cond_A} and the bound on the entropy  we establish priori estimates for the weighted cscK equation extending the estimates of Chen and Cheng to the  {\it degenerate weighted} setting.
Consequently, one can then extract a subsequence $\vph_\vep$ converging in $\CC^\infty_\loc(Y \setminus E)$ to a bounded $\pi^\ast \om$-psh function $\vph_0$, which is smooth on $Y \setminus E \simeq X^\reg$ and solves the weighted cscK equation there. 

In particular, within the context of Corollary~\ref{bigthm:cscK},  these estimates  shows that singular K\"ahler--Einstein metrics on a K\"ahler varieties can be approximated by extremal metrics, provided Condition~\ref{cond_A} holds, and this generalizes a recent result by Sz\'ekelyhidi \cite[Thm.~3]{Szekelyhidi_2024} for non-discrete automorphism group. 

\smallskip
We highlight below the main difficulties and contributions made in the article: 

\begin{itemize}
    \item {\bf $L^\infty$-estimate and  Condition~\ref{cond_A}.}
    The main difficulty to obtain  uniform $L^\infty$-estimate is the lack of uniform bounds for the Ricci curvature of $\omega_Y$ with respect to the reference metrics $(\omega_\vep)_\vep$ on the resolution of singularities.
    That is the reason why the estimates of Chen--Cheng \cite{Chen_Cheng_2021_1} and Guo--Phong \cite{Guo_Phong_2022} cannot be applied directly.   
    To overcome this difficulty, we follow and generalize the Guo-Phong's approach by incorporating Condition~\ref{cond_A}, which provides a weak version of a lower bound for $\Ric(\omega_Y)$, together with the strong openness \cite{Berndtsson_2013_openness_conjecture, Guan_Zhou_2015_strong_openness}, and Demailly--Kollar's theorem \cite{Demailly_Kollar_2001}. 
    Moreover, as in the smooth setting \cite{Han_Liu_2024, DiNezza_Jubert_Lahdili_2024}, our $L^\infty$-estimate, Theorem~\ref{thm_uniform_cscK}, does not require that $v$ be log-concave and $w$ be an extremal weight.
    
    \item {\bf Constructing examples.} In Section~\ref{sec:examples}, we present a method for constructing singular cscK metrics inspired by Arezzo--Pacard \cite{Arezzo_Pacard_2006} under Condition~\ref{cond_A}. 
Additionally, we propose a mixed construction that integrates the result on the smoothable setting \cite{Pan_To_Trusiani_2023}. 
An important ingredient in the construction is the stability of the coercivity of the Mabuchi functional for the blow-up along compact submanifolds in the smooth locus of singular varieties (cf. Lemma \ref{lem:openness_coercivity}) instead of desingularizations as in \cite{Boucksom_Jonsson_Trusiani_2024}.
Comparing to the usual gluing technique (cf. \cite{Arezzo_Pacard_2006}), our method does not require a very precise understanding of how the metric behaves near the singular locus. 
\end{itemize}

We provide further detailed comments on the proof and the technical assumption on the $\log$-concavity of $v$.  
For higher order estimates, we adapt the strategy of Chen--Cheng \cite{Chen_Cheng_2021_2} and its generalization for the weighted setting \cite{Han_Liu_2024, DiNezza_Jubert_Lahdili_2024}. 
Due to the degeneration of $\omega_\epsilon$, we modify the test function for the Laplacian by adding a strictly $\pi^*\omega$-psh function $\rho$ with analytic singularities, ensuring $\omega_\vep+dd^c \rho\geq C \omega_Y$ to absorb problematic terms. 
We obtain an integral Laplacian estimate with respect to $\mu= e^{C'\rho} \omega_Y^n$, giving a local integral Laplacian estimate away from the exceptional locus. 
The $\log$-concavity of $v$ is used in the uniform integral Laplacian estimate in order to eliminate a problematic term in the weighted Aubin--Yau inequality. 
Local Laplacian and higher order estimates then follow from a generalization of Chen--Cheng's local estimates for the weighted setting (cf.  Appendix~\ref{sect_local_chen_cheng}).   
We remark that our estimates apply to general $w$ without requiring it to be an extremal weight.

\begin{ack*}
The authors are grateful to C.~Arezzo, S.~Boucksom, E.~Di~Nezza, S.~Jubert, A.~Lahdili, Y.~Odaka, S.~Sun, G.~Sz\'ekelyhidi, and A.~Trusiani for helpful and inspiring discussions. 
The authors would like to thank  S.~Boucksom, M.~Jonsson, A.~Trusiani, and G.~Sz\'ekelyhidi for kindly sharing their articles.
The authors would also like to thank V.~Guedj and H.~Guenancia for their suggestions that helped improve the exposition. 
We would like to thank the referee for very helpful comments and suggestions. 

\smallskip
Part of this article is based upon work supported by the National Science Foundation under Grant No. DMS-1928930, while the first-named author was in residence at the Simons Laufer Mathematical Sciences Institute (formerly MSRI) in Berkeley, California, during the Fall 2024 semester. 
The second-named author is partially supported by ANR-21-CE40-0011-01 (research project MARGE), PEPS-JCJC-2024 (CRNS) and Tremplins-2024 (Sorbonne University). 
\end{ack*}

\section{Preliminaries}\label{sec:prelim}
In this section, we recall several basic notions of pluripotential theory on singular spaces and weighted cscK metrics. 
We define $\dc := \ii (\db - \pl)$ and then we have $\ddc = 2 \ii \ddb$.
By variety, we always mean an irreducible reduced complex analytic space. 

\subsection{Pluripotential theory on normal K\"ahler varieties} 
Let $X$ be an $n$-dimensional compact normal complex variety. 
A {smooth K\"ahler metric} on $X$ is defined by a K\"ahler metric $\om$ on $X_\reg$, and it is locally a restriction of a K\"ahler metric defined near the image of a local embedding $j:X \underset{\loc.}{\hookrightarrow} \BC^N$. 
By {K\"ahler variety}, we mean a complex variety $X$ equipped with a smooth K\"ahler metric $\om$. 
More generally, a smooth form $\af$ on $X$ is defined as a smooth form on $X^\reg$ such that $\af$ extends smoothly under any local embedding $X \underset{\loc.}{\hookrightarrow} \BC^N$. 

\begin{defn}
A function $\phi: X \to \BR \cup \{-\infty\}$ is $\om$-plurisubharmonic ($\omega$-psh for short) if $\phi + u$ is plurisubharmonic where $u$ is a local potential of $\omega$; i.e. $\phi+u$ is the restriction of an psh function defined near an open neighborhood of $\im(j)$. 
Denote by $\PSH(X, \om)$ the set of all integrable $\om$-psh functions. 
\end{defn}

By Bedford--Taylor's theory \cite{Bedford_Taylor_1982}, the complex Monge--Amp\`ere operator can be extended to bounded $\om$-psh functions on smooth complex manifolds. 
In the singular setting, the complex Monge--Amp\`ere operator of locally bounded psh functions can also be defined by taking zero through singular locus (cf. \cite{Demailly_1985} for more details).

\subsubsection{Finite energy class}
Set $V := \int_X \omega^n$.  
For all $\varphi\in \PSH(X,\omega)\cap L^{\infty}(X)$, the Monge--Amp\`ere energy is defined by
\[
    \E(\varphi) := \frac{1}{(n+1)} \sum_{j=0}^n \int_X \vph \, \om_\vph^j \w \om^{n-j},
\]
where $\om_\vph := \om + \ddc \vph$.
The energy satisfies $\E(\varphi+ c) = \E(\varphi )+ cV$ for all $c \in \mathbb{R}$ and for $\varphi,\psi\in \PSH(X,\omega)\cap L^{\infty} (X) $, if $\varphi\leq \psi$ then $\E(\varphi)\leq \E(\psi)$ with equality iff $\varphi = \psi$. 
By the later property, $\E$ extends uniquely to $\PSH(X, \om)$ by 
\[
    \E(\varphi):= \inf \set{\E(\psi)}{ \varphi \leq \psi \in \PSH(X,\omega) \cap L^\infty(X)}.
\]
The finite energy class is then given as 
$$
    \mathcal{E}^1(X, \omega)= \set{\varphi \in \PSH(X, \omega)}{\E(\varphi) > -\infty}.
$$

\smallskip
The space of finite energy potentials $\CE^1(X,\om)$ admits a metric topology induced by the so-called $d_1$-distance, which can be expressed as follows (cf. \cite[Thm.~2.1]{Darvas_2017}, \cite[Thm.~B]{Dinezza_Guedj_2018})
\[
    d_1(u,v) = \E(u)+\E(v)-2\E(P_\omega(u,v)),
\]
where $P_{\omega}(u,v):=\left( \sup\set{w\in \PSH(X,\omega)}{w\leq \min(u,v)}\right)^*$. 

\subsection{Weighted cscK  metrics}
In this section, we review some basic concepts related to weighted cscK metrics. 

\subsubsection{Automorphism group and holomorphic vector fields}\label{sect_w_smt_setting} We recall here certain well-known properties of the Lie algebra of $\Aut(X)$ and some of its subgroups (cf. \cite{LeBrun_Simanca_1994, Gauduchon_book}).

\smallskip
Let $(X,\om)$ be a compact K\"ahler manifold and denote $J$ to be its complex structure.  
The automorphism group $\Aut(X)$ is a complex Lie group, whose Lie algebra  $\mathfrak{h} = \set{\xi \in \Gm(TX)}{\mathcal{L}_\xi J=0}$ consists of real holomorphic vector fields on $X$. 
A vector field $\xi$ is real holomorphic if and only if $\xi':= \frac{1}{2}(\xi-iJ\xi )$ is a holomorphic, so $\frak{h}$ can be identified with $H^0(X, T_X)$ the space of holomorphic vector fields. 
Denote by $\Aut_0(X)$ the identity component of $\Aut(X)$. 

\smallskip
The Albanese torus is defined by (cf. \cite[Thm.~9.7]{Ueno_1975})
\[
    \text{Alb}(X)= H^0(X, \Omega_X^1)^\vee /H_1(X, \mathbb{Z}),
\]
where the inclusion $H_1(X, \mathbb{Z} )\xhookrightarrow{}  H^0(X, \Omega_X^1)^\vee $ is induced via the map $\af \mapsto \int_c \af$ for any loop $ c$ and holomorphic $1$-form $\af$. 
Then one can define a Lie group homomorphism $\tau: \Aut_0(X)\rightarrow  \text{Alb}(X)$ as follows. 
Fix a $x_0\in X$, for any $\sm \in \Aut_0(X)$, we define $\tau(\sm): \af \mapsto \int_{x_0}^{\sm \cdot x_0} \af$, which does not depend on $x_0$ as $\af$ is harmonic.  
The homomorphism $\tau$ induces an action of $\Aut_0(X)$ on $\text{Alb}(X)$ by translation. 
Moreover, the derivative of $\tau$ at the identity is $\tau': \mathfrak{h}\rightarrow  H^0(X, \Om_X^1)^\vee$, $\xi \mapsto (\tau'(\xi): \alpha \mapsto \alpha(\xi))$. 
Define $\mathfrak{h}_{\red}:= \ker \tau'$, which consists of all holomorphic vector field $\xi \in  H^0(X,T_X)$ such that $\alpha(\xi)= 0$ for all $\alpha \in H^0(X,\Om_X^1)$. 
By \cite[Thm. 1]{LeBrun_Simanca_1994}, we have 
\begin{equation*}
    \frak{h}_{\red} 
    = \{\xi \in  H^0(X,T_X) \mid \xi = \nabla^{1,0}f = g^{j\bar k}\partial_{\bar k} f \frac{\partial}{\partial z^j},\, f\in \CC^\infty(X,\BC)\}. 
\end{equation*}
Then $\mathfrak{h}_{\red}$ generates a Lie subgroup $\Aut_{\red}(X) \subset \Aut_0(X)$ which acts trivially on the Albanese torus.  
We also identify  $\frak{h}_{\red}\subset \frak{h}$ with the Lie algebra of real holomorphic vector fields with zeros (cf. \cite[Thm. 1]{LeBrun_Simanca_1994}). 

\subsubsection{Weighted setting}
Fix a  real torus $T\subset\Aut_{\red}(X)$ with Lie algebra $\mathfrak{t}\subset \frak{h}_{\red}$. 
Any closed, real, $T$-invariant $(1,1)$-form $\omega$ admits a moment map $m_\omega: X\rightarrow \mathfrak{t}^\vee$, that is a unique (up to additive constant) $T$-invariant smooth map such that for each $\xi\in \mathfrak{t}$, $m_\om^\xi := \langle m_\om, \xi\rangle: X\rightarrow \mathbb{R}$ satisfies $-dm_\om^\xi = i_\xi\omega = \omega(\xi, \cdot)$; in other words, $m^\xi_\om$ is a Hamiltonian function of $\xi$ with respect to $\om$. 

\smallskip
Denote by $P_\om := \im(m_\omega)$ a compact set in $\mathfrak{t}^\vee$ and $\CH^T_\om:=\CH^T(X,\om)$ the set of $T$-invariant, strictly positive, smooth, $\om$-psh functions.   
For any $\phi\in \mathcal{H}^T_\om$, we normalize $m_\phi := m_{\om_\phi}$ by $m_\phi^\xi =m_\om^\xi + d^c \phi(\xi)$ for all $\xi \in \frak{t}$. 
Then from \cite[Lem.~1]{Lahdili_2019}, under this normalization, for any $\phi \in \mathcal{H}^T_\om$, one has $P_{\om_{\phi}} = P_\om$ and
\begin{equation}\label{eq_fixed_vol}
    \int_X m^\xi_\phi \om^n_\phi= \int_X m^\xi_\om \om^n, \quad  \forall \xi\in \frak{t}.
\end{equation}

\smallskip
Let $v,w\in \CC^\infty(\frak{t}^\vee, \mathbb{R})$ with $v>0$ on $P:=P_\om$. 
We recall the following notations and definitions from \cite[Sec.~3]{Boucksom_Jonsson_Trusiani_2024}, 
\begin{defn}\label{def_weighted_setting}
Let $(\ta, m_\ta)$ be a $T$-invariant pair and $f$ be a $T$ distribution. 
\begin{enumerate}
    \item A moment map $m_{dd^c f}$ for the $T$-invariant $(1,1)$-current $dd^c f$ is defined by
    \[
        m_{dd^c f}^\xi:= d^c f(\xi) = i_\xi d^c f;
    \]
    
    \item For a smooth function $g: \BR_{>0} \to \BR$, and another $T$-invariant pair $(\om, m_\om)$, 
    \[
        \langle (g(v))'(m_\omega), m_\ta \rangle 
        := g'(v(m_\omega)) \sum_{\af} v_{\af} (m_\om) m_\ta^{\xi_\af}
    \]
    where $(\xi_\af)_\af$ is a basis of $\frak{t}$, and $(v_{\af})_\af$ are the partial derivatives of $v$ with respect to the dual basis $(\xi^\af)_\af$ in $\frak{t}^\vee$;
    
    \item The $v$-weighted trace is given by 
    \[
        \tr_{\omega,v} \theta := \tr_\omega \theta + \langle (\log v)'(m_\omega), m_\theta \rangle;
    \]
    
    \item The $v$-weighted Laplacian is defined as
    \[
        \Delta_{\omega, v} f := \tr_{\omega,v} (dd^c f)= \Delta_{\omega} f + \langle (\log v)'(m_\omega), m_{dd^c f}\rangle;
    \]
    
    \item The $v$-weighted Ricci curvature is 
    \begin{equation*}
        \Ric_{v}(\omega):= \Ric(v(m_\om) \omega^n ) = -dd^c \log (v(m_\om) \omega^n) 
        = \Ric(\om) - dd^c \log v(m_\om);
    \end{equation*}
    
    \item The $v$-weighted scalar curvature is 
    \begin{equation*}
        \Scal_{v}(\omega):= \tr_{\omega,v } \Ric_{v}(\omega)= \Scal(\omega)-\frac{\langle v'(m_\omega), \Delta_\omega m_\omega \rangle + \Delta_\omega v(m_\omega)}{v(m_\omega)}. 
    \end{equation*}
    \end{enumerate}
\end{defn}

As \cite[(3.4)]{Boucksom_Jonsson_Trusiani_2024}, by applying the interior product $i_\xi$ to the trivial relation $d^c f\wedge \om^n=0$, one can obtain $m_{dd^c f}^\xi \om^n= n d^c f \w i_\xi \omega \wedge \om^{n-1}$.
Combining this formula with
\[
    dv(m_\om)= \sum_{\alpha}v_\alpha(m_\om)dm_\om^{\xi_\alpha}=-\sum_\alpha v_{\alpha} (m_\om)i_{\xi_\alpha}\om,
    \quad\text{and}\quad
    \langle v'(m_\omega), m_{dd^c f} \rangle 
    :=  \sum_{\af} v_{\af} (m_\om) m_{dd^c f}^{\xi_\af},
\]
one can derive that  
\[
\langle v'(m_\om), m_{dd^c f}\rangle= n\frac{dv(m_\om) \wedge d^c f\wedge \om^{n-1}}{\om^n}=\tr_\om(dv(m_\om)\wedge d^cf). 
\]
Therefore, we have the following formula for the weighted Laplacian
\begin{equation}\label{eq_w_laplacian}
        \Delta_{\omega, v}f =   \Delta_{\omega} f+ \tr_{\om} (d\log v(m_\om)\wedge d^cf).
\end{equation}

\smallskip
The $v$-weighted Monge--Amp\`ere operator is defined as 
\[
    \MA_v(\phi):= \MA_{\om,v}(\phi) := v(m_{\phi})(\omega+ dd^c \phi)^n
\] 
for any $\phi\in \CH^T_\om$.
From \cite[(3.32)]{Boucksom_Jonsson_Trusiani_2024}, one has the following integration-by-parts formula
\begin{equation}\label{eq:weighted_IPP}
    \int_X g(\Delta_{\om_{\phi}, v}f) \MA_v(\phi)=\int_X f(\Delta_{\om_\phi, v}g) \MA_v(\phi) = -\int_X \langle df,dg\rangle_{\om_\phi} \MA_v(\phi),
\end{equation}
for all $T$-invariant distributions $f,g$ such that at least one of which is smooth, 
where 
\[
    \langle df, dg \rangle_\om := \tr_\om (df \w \dc g ).
\]

\subsubsection{Weighted cscK equations}
Consider the weighted cscK problem
\begin{equation*}
    \Scal_v(\omega_\phi)= w(m_\phi).  
\end{equation*}
The above equation can be rewritten as 
\begin{equation*}
\begin{cases}
    v(m_\phi)(\omega+dd^c \phi)^n = e^F \omega^n, \quad \sup_X \phi= 0,\\ 
    \Delta_{\phi,v} F = -w(m_\phi)+ \tr_{\phi, v} (\Ric(\omega)),
\end{cases}  
\end{equation*}
where $\tr_{\phi,v} := \tr_{\om_{\phi},v}$ and $\Delta_{\phi,v} := \Delta_{\om_\phi, v}$. 
For any volume form $\mu_X$ on $X$, taking $\widetilde{F} := F+ \log \frac{\omega^n}{\mu_X} $, the original coupled equations are equivalent to the following coupled equations 
\begin{equation*}
\begin{cases}
    v(m_\phi)(\omega+dd^c \phi)^n = e^{\widetilde{F}} \mu_X, \quad \sup_X \phi = 0,\\ 
    \Delta_{\phi, v} \widetilde{F} = - w(m_\phi)+ \tr_{\phi, v} (\Ric(\mu_X)).  
\end{cases}
\end{equation*}

\smallskip
Since $P$ is compact, there are positive constants $C_{v}$ and $C_{w}$ such that, on $P$,   
\begin{equation*}
    C_{v}^{-1}
    \leq v + \sum_\af |v_\alpha| + \sum_{\af,\bt} |v_{\alpha\beta}| 
    \leq C_{v},  \text{ and }\,   
    C_{w}^{-1}
    \leq |w| + \sum_\af |w_\alpha| + \sum_{\af,\bt} |w_{\alpha\beta}| 
    \leq C_{w}.
\end{equation*}
This yields that 
\begin{equation*}
    |\tr_{\om_\phi,v} (\eta) - \tr_{\om_\phi} (\eta))|= |\langle (\log v)'(m_\phi), m_{\eta} \rangle
    |\leq C_{v, \eta},  
\end{equation*}
where $C_{v, \eta}$ depending only on $C_{v}$ and $\eta$. 
In particular, for any $\vph, \psi \in \CH^T(X,\om)$ and $\eta := \om_\psi$,
\begin{equation}\label{eq_compare_trace_2}
    |\tr_{\om_\vph, v} (\om_\psi ) - \tr_{\om_\vph} (\om_\psi)|= |\langle (\log v)'(m_\vph), m_{\psi} \rangle
        |\leq C_{v},  
\end{equation}
since $m_\vph(X) = m_\om(X) = P$ is compact.  

\subsubsection{Weighted Mabuchi functional}
Let $\mu_X$ be a fixed $T$-invariant  volume form on $X$. 
The $v$-weighted relative entropy is defined by
\[
    \H_{v}(\phi):=\H_{v,\mu_X}(\phi)
    :=\int_X \log \lt(\frac{\MA_v(\phi)}{\mu_X}\rt) \MA_v(\phi).
\] 
The $w$-weighted Monge--Amp\`ere energy $\E_{w}:=\E_{\om,w}: \CH_\om^T\rightarrow \BR$ is given by
\[
    (d \E_{w})_\phi(f):=  \int_X f w(m_{\om_\phi})\omega^n_\phi ,\quad \E_{w}(0)=0 
\]
for any $ f\in \CC^\infty(X)^T$ (cf. \cite[Lem. 3]{Lahdili_2019}).

\smallskip
Fix $\theta$ a $T$-invariant closed $(1,1)$-form and $m_\ta$ is its moment map. 
Then the twisted weighted Monge--Amp\`ere energy  $\E_{v}^\theta:=\E_{\om, v}^\theta: \mathcal{H}^T_\om \rightarrow \BR$ is the primitive of $\lt( v(m_\phi )n\theta\wedge \om_\phi^n+ \langle v'(m_\phi), m_\theta\rangle\om_\phi^n \rt)$ with $\E_{\om, v}^\theta(0) = 0$; in other words,
\[
   (d \E_{\om, v}^\theta)_\phi(f) 
   = \int_X f \lt(v(m_\phi) n \theta \wedge \om_\phi^n+ \langle v'(m_\phi), m_\theta\rangle\om_\phi^n\rt), 
   \quad \E_{\om, v}^\theta(0) = 0.
\]
for any $ f\in \CC^\infty(X)^T$ (see \cite[Lem. 4]{Lahdili_2019} for the well-definedness).  

\begin{defn}
    The weighted Mabuchi energy $\M_{v,w}$ is an Euler--Lagrange functional of $\CH^T_\om\ni \phi \mapsto \M'_{v,w}(\phi):=(w(m_\phi) -\Scal_v(\om_\phi))\MA_{v}(\phi)$.
\end{defn}

From \cite{Lahdili_2019, Boucksom_Jonsson_Trusiani_2024}, we have the following lemma. 
\begin{lem}
     The following formula holds on $\CH_\om^T$
    \[\M_{v,w}:=\M_{v,w, \mu_X}= \H_{v, \mu_X} 
  +\E_v^{-\Ric(\mu_X) }  + \E_{vw}.\]
\end{lem}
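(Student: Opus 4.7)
The plan is to prove the identity by verifying that both sides have the same first variation on $\CH^T_\om$, using the fact that $\M_{v,w}$ is characterized by $(d\M_{v,w})_\phi(f) = \int_X f (w(m_\phi) - S_v(\om_\phi)) \MA_v(\phi)$ for every $T$-invariant smooth function $f$. Thus it suffices to differentiate the right-hand side $\H_{v,\mu_X} + \E_v^{-\Ric(\mu_X)} + \E_{vw}$ and recover this expression; the additive constant is fixed by the standard normalization that each energy vanishes at $\phi=0$ (and $\M_{v,w}$ is normalized accordingly).

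The contribution from $\E_{vw}$ is immediate: the defining formula $(d\E_w)_\phi(f) = \int_X f w(m_\phi) \om_\phi^n$ applied with the weight $vw$ in place of $w$ gives $(d\E_{vw})_\phi(f) = \int_X f\, w(m_\phi) \MA_v(\phi)$, which accounts for the $w(m_\phi)$-part of $\M'_{v,w}$. For the twisted weighted energy, the definition of $\E_v^\theta$ together with the bookkeeping identity $\tr_{\phi,v}(\theta)\cdot \MA_v(\phi) = v(m_\phi)\, n\,\theta\wedge \om_\phi^{n-1} + \langle v'(m_\phi), m_\theta\rangle\, \om_\phi^n$ (which is immediate from the definition of the weighted trace) yields, with $\theta = -\Ric(\mu_X)$,
\[
    (d\E_v^{-\Ric(\mu_X)})_\phi(f)
    = -\int_X f\, \tr_{\phi,v}(\Ric(\mu_X))\, \MA_v(\phi).
\]

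For the entropy, the main input is the variational identity $\tfrac{d}{dt}\big|_{t=0} \MA_v(\phi + tf) = \Delta_{\phi,v} f \cdot \MA_v(\phi)$, which follows from $\tfrac{d}{dt}m^\xi_{\phi+tf} = \dc f(\xi) = m^\xi_{\ddc f}$, the chain rule applied to $v(m_{\phi+tf})$, and the standard variation $\tfrac{d}{dt}\om_{\phi+tf}^n = n\ddc f \wedge \om_\phi^{n-1}$, after recognizing the result as precisely the definition of the weighted Laplacian. Writing $\MA_v(\phi) = e^{F_\phi}\mu_X$ with $F_\phi := \log(\MA_v(\phi)/\mu_X)$, this identity gives $\tfrac{d}{dt}F_{\phi+tf} = \Delta_{\phi,v} f$, so
\[
    (d\H_{v,\mu_X})_\phi(f)
    = \int_X \Delta_{\phi,v} f \cdot \MA_v(\phi) + \int_X F_\phi\, \Delta_{\phi,v} f \cdot \MA_v(\phi)
    = \int_X f\, \Delta_{\phi,v} F_\phi \cdot \MA_v(\phi),
\]
where the first term vanishes by \eqref{eq:weighted_IPP} applied with $g\equiv 1$ and the second is the integration-by-parts formula \eqref{eq:weighted_IPP}. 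Taking the $v$-weighted trace of the relation $\Ric_v(\om_\phi) = -\ddc F_\phi + \Ric(\mu_X)$ yields $\Delta_{\phi,v} F_\phi = -S_v(\om_\phi) + \tr_{\phi,v}\Ric(\mu_X)$.

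Summing the three variations, the $\tr_{\phi,v}\Ric(\mu_X)$ terms cancel and one recovers $\int_X f(w(m_\phi) - S_v(\om_\phi))\MA_v(\phi)$, as required. The main—and in my view only subtle—point is establishing $\tfrac{d}{dt}\MA_v(\phi+tf) = \Delta_{\phi,v} f \cdot \MA_v(\phi)$, since it is here that the moment-map normalization $m_\phi^\xi = m_\om^\xi + \dc\phi(\xi)$ and the definition \eqref{eq_w_laplacian} of the weighted Laplacian must be matched; once this variational identity is secured, the remainder of the argument reduces to a direct application of the weighted integration-by-parts formula \eqref{eq:weighted_IPP}.
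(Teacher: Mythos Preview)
Your proof is correct and follows essentially the same variational approach as the paper: both verify that the right-hand side has first variation $\M'_{v,w}(\phi)=(w(m_\phi)-S_v(\om_\phi))\MA_v(\phi)$, the paper simply citing \cite[Lem.~3.29]{Boucksom_Jonsson_Trusiani_2024} for the fact that $\H_v+\E_v^{-\Ric(\mu_X)}$ is an Euler--Lagrange functional for $-S_v\MA_v$, whereas you carry out that computation explicitly. One minor slip: the entropy $\H_{v,\mu_X}$ does not vanish at $\phi=0$ in general, but this is harmless since the ``$:=$'' in the statement makes the identity a definition of the normalization, so matching first variations is indeed all that is required.
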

The above lemma follows from the fact that the functional $\CH^T_\om \ni \phi \mapsto \H_{v} +\E_v^{-\Ric(\mu_X)}$ is an Euler--Lagrange functional for $\phi\mapsto -\Scal_{v}(\om_\phi) \MA_{v}(\phi)$ (cf. \cite[Lem. 3.29]{Boucksom_Jonsson_Trusiani_2024}).
\begin{lem}\label{lem_minimizer}
   Any $(v,w)$-cscK metric in $\{\om\}$ is a minimizer of $\M_{v,w, \mu_X}$ for any choice of $\mu_X$.
\end{lem}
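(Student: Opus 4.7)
The plan is to combine the Euler--Lagrange identification from the preceding lemma with convexity of the weighted Mabuchi functional along weak geodesics. First, since $\M_{v,w,\mu_X}$ is by definition the Euler--Lagrange functional of $\phi \mapsto (w(m_\phi) - S_v(\om_\phi))\MA_v(\phi)$, a $(v,w)$-cscK potential $\phi_0 \in \CH^T_\om$ satisfies
\[
(d\M_{v,w,\mu_X})_{\phi_0}(f) = \int_X f\bigl(w(m_{\phi_0}) - S_v(\om_{\phi_0})\bigr)\MA_v(\phi_0) = 0
\]
for every smooth $T$-invariant function $f$, and is therefore a critical point of $\M_{v,w,\mu_X}$.

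To upgrade ``critical point'' to ``minimizer'', I would invoke convexity of $\M_{v,w,\mu_X}$ along $\CC^{1,\bar{1}}$-geodesics in the space of $T$-invariant $\om$-psh functions, as established in Lahdili~\cite{Lahdili_2019} and revisited in~\cite{Boucksom_Jonsson_Trusiani_2024}. Given any $\phi_1 \in \CH^T_\om$, connect it to $\phi_0$ by a weak geodesic $(\phi_t)_{t\in[0,1]}$; then $t \mapsto \M_{v,w,\mu_X}(\phi_t)$ is a convex function on $[0,1]$. Since $\phi_0$ is smooth and is a critical point, the right-slope at $t=0$ can be computed by the first variation formula and equals $\int_X \dot{\phi}_0^{+}\bigl(w(m_{\phi_0}) - S_v(\om_{\phi_0})\bigr)\MA_v(\phi_0) = 0$, so convexity yields $\M_{v,w,\mu_X}(\phi_1) \geq \M_{v,w,\mu_X}(\phi_0)$. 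The inequality extends to arbitrary $\phi_1 \in \CE^1(X,\om)^T$ by lower semicontinuity of $\M_{v,w,\mu_X}$ together with a standard approximation.

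The independence of $\mu_X$ is a direct bookkeeping: replacing $\mu_X$ by $e^g\mu_X$ alters $\H_{v,\mu_X}$ by $-\int_X g\,\MA_v(\phi)$ and shifts $\E_v^{-\Ric(\mu_X)}$ by an exactly compensating twisted-energy contribution, so that $\M_{v,w,\mu_X}$ and $\M_{v,w,\mu'_X}$ differ only by a $\phi$-independent constant and share the same minimizers.

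The main technical subtlety lies in the convexity step: one must verify convexity of the weighted entropy $\H_v$ along weak geodesics and identify the slope at a smooth endpoint with the integral of the Euler--Lagrange expression against the geodesic's initial velocity. This weighted analogue of the Berman--Berndtsson convexity theorem is where the essential work sits; the remainder is a standard convex-analysis argument combined with the density of $\CH^T_\om$ in $\CE^1(X,\om)^T$.
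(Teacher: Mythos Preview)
Your argument is correct and follows the same underlying mathematics as the paper's proof. The paper simply black-boxes the two steps: it cites \cite[Cor.~1]{Lahdili_23} for the fact that a $(v,w)$-cscK metric minimizes $\M_{v,w,\om^n}$ (which is precisely the convexity-along-geodesics argument you sketch), and cites \cite[Lem.~3.48]{Boucksom_Jonsson_Trusiani_2024} for the statement that $\M_{v,w,\mu_X}$ and $\M_{v,w,\om^n}$ differ by an additive constant (your ``bookkeeping'' paragraph). Your version unpacks what those citations contain; the content is the same.
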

\begin{proof}
It follows from \cite[Cor.~1]{Lahdili_23} that  any $(v,w)$-cscK metric in $\{\om\}$ is a minimizer of $\M_{v,w, \om^n}$. 
On the other hand, by \cite[Lem. 3.48]{Boucksom_Jonsson_Trusiani_2024} there is a constant $C$ such that  $\M_{v,w, \mu_X}=\M_{v,w, \om^n}+C$, hence   any $(v,w)$-cscK metric in $\{\om\}$ is  also a minimizer of $\M_{v,w, \mu_X}$.
 \end{proof}

\subsubsection{Weighted extremal metrics}\label{sect_w_ext}
We recall here the definition of weighted extremal metrics and relative weighted Mabuchi energy, which is a modification of Mabuchi energy such that it is $T_\BC$-invariant. 
We refer to \cite[Sec.~3.6]{Boucksom_Jonsson_Trusiani_2024} for more details. 
\begin{lem}[{\cite[Lem. 3.34]{Boucksom_Jonsson_Trusiani_2024}}] \label{lem_Mabuchi_invariant}
The weighted Mabuchi functional $\M_{v,w}$ is translation invariant and $T_\BC$-invariant iff for all affine function $\ell =\xi+ c\in \frak{t}\bigoplus \BR$ on $\frak{t}^\vee$ we have $\int_X \ell (m_\om)\M'_{v,w}(0)=0$, i.e.
\[
    \int_X \ell(m_\om) w(m_\om)\MA_v(0)=\int_X \ell(m_\om) \Scal_v(\om)\MA_v(0).
\]
\end{lem}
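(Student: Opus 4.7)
My plan is to rephrase both the invariance of $\M_{v,w}$ and the pairing condition as the vanishing of the differential of $\M_{v,w}$ along a distinguished family of tangent directions, and then reduce the ``for every $\phi$'' formulation to ``at $\phi = 0$'' via a $\phi$-independence argument.

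First I would identify the relevant infinitesimal directions. Constant shifts $\phi \mapsto \phi + c$ give the constant tangent direction, while the flow of $\exp(it\xi) \subset T_\BC$ with $\xi \in \frak{t}$ acts on $\CH_\om^T$ with infinitesimal generator $m_\phi^\xi$ at the potential $\phi$. Together they span exactly the affine directions $\{\ell(m_\phi) : \ell = \xi + c \in \frak{t} \oplus \BR\}$ at $\phi$. Using the Euler--Lagrange characterization of $\M'_{v,w}$, the differential in such a direction is
\[
(d\M_{v,w})_\phi(\ell(m_\phi)) = \int_X \ell(m_\phi)\bigl(w(m_\phi) - S_v(\om_\phi)\bigr)\MA_v(\phi),
\]
so translation and $T_\BC$-invariance of $\M_{v,w}$ is equivalent to the vanishing of this integral for every $\phi \in \CH_\om^T$ and every affine $\ell$. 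Specializing at $\phi = 0$ gives the necessity direction of the lemma.

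For sufficiency, the task reduces to showing that the functional
\[
\Psi_\ell(\phi) := \int_X \ell(m_\phi)\bigl(w(m_\phi) - S_v(\om_\phi)\bigr)\MA_v(\phi)
\]
does not depend on $\phi \in \CH_\om^T$; once this is established, the hypothesis $\Psi_\ell(0) = 0$ forces $\Psi_\ell \equiv 0$, so the differential vanishes along every $T_\BC$-orbit and translation ray, yielding invariance by integration along these orbits. The piece $\int_X (\ell w v)(m_\phi)\om_\phi^n$ is $\phi$-independent by the K\"ahler Duistermaat--Heckman principle: for any smooth $F$ on $\frak{t}^\vee$, the pushforward $(m_\phi)_\ast \om_\phi^n$ depends only on $[\om]$, which I would verify by differentiating along a smooth path $\phi_t$ and using the moment map identity $-dm_\phi^\xi = i_\xi \om_\phi$ together with Stokes' theorem (this already appears in \eqref{eq_fixed_vol} with $\ell=\xi$). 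The remaining piece $\int_X \ell(m_\phi) S_v(\om_\phi)\MA_v(\phi)$ is the weighted Futaki-type invariant; one rewrites $S_v(\om_\phi)\MA_v(\phi)$ in terms of $\Ric_v(\om_\phi)\wedge v(m_\phi)\om_\phi^{n-1}$ plus a $\ddc$-exact term and then applies the weighted integration-by-parts formula \eqref{eq:weighted_IPP} along a variation to show that the first-order variation is $\ddc$-exact, hence of zero integral.

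The main obstacle is this last constancy. Expanding $S_v$ unpacks into several terms involving derivatives of $m_\phi$ and of $v$, and one must check that their first-order variations organize into exact forms under the weighted pairing. This is essentially the computation carried out in the smooth setting by Lahdili \cite{Lahdili_2019} and extended to the singular pluripotential framework in \cite[Sec.~3]{Boucksom_Jonsson_Trusiani_2024}; the lemma is cleanest as a direct corollary of that constancy combined with the direction-identification of the first step.
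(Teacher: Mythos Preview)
The paper does not supply its own proof of this lemma; it is quoted verbatim from \cite[Lem.~3.34]{Boucksom_Jonsson_Trusiani_2024} and used as a black box. Your outline is the standard argument behind that reference (and behind Lahdili's original treatment): identify the infinitesimal $T_\BC$-and-translation directions at $\phi$ with the affine functions $\ell(m_\phi)$, express $(d\M_{v,w})_\phi$ in those directions via the Euler--Lagrange density $\M'_{v,w}(\phi)$, and then reduce ``vanishing for all $\phi$'' to ``vanishing at $\phi=0$'' by proving that $\Psi_\ell(\phi)$ is a weighted Futaki-type invariant independent of $\phi$. That reduction is exactly the nontrivial content, and you correctly locate it and defer its verification to \cite{Lahdili_2019, Boucksom_Jonsson_Trusiani_2024}; the Duistermaat--Heckman argument handles the $w$-term, and the $S_v$-term is the genuine weighted Futaki computation. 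So your proposal is correct and matches what the cited source does, while the present paper simply imports the result.

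One cosmetic point: the real torus $T$ itself acts trivially on $T$-invariant potentials, so the nontrivial $T_\BC$-directions come from $\exp(sJ\xi)$ with $s\in\BR$ rather than from ``$\exp(it\xi)$''; your identification of the tangent vector as $m_\phi^\xi$ is nonetheless the right one.
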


Now, let $w_0 \in \CC^\infty(P, \BR_{>0})$. 
The weighted Futaki--Mabuchi pairing  on the space $\frak{t}\bigoplus \BR$ of affine functions on $\frak{t}^\vee$ is defined by 
\[
    \langle \ell,\ell'\rangle := \int_X \ell(m_\om) \ell'(m_\om) w_0(m_\om)\MA_v(0)=\int_X \ell(m_\om)\ell'(m_\om)\MA_{vw_0}(0).
\]
Then this is positive definite, and there exists the unique affine function $\ell^{\ext} =\ell^{\ext}_{\om,v,w_0}$ on $\frak{t}^\vee$ such that 
\begin{equation}\label{eq_ext_affine}
    \langle\ell, \ell^{\ext}\rangle= \int_X \ell(m_\om) \Scal_v(\om)\MA_v(0).
\end{equation}
Then the weighted Mabuchi energy $\M_{v,w}$ with $w=w_0\ell^{\ext}$ is $T_\BC$-invariant.  We define $\M^{\rm rel}_{v,w_0}:=\M_{v,w_0\ell^{\ext}}$ is the {\it relative weighted Mabuchi energy}.

\begin{defn} 
Let $v, w_0\in \CC^\infty(P, \BR_{>0})$. 
A metric $\om$ is called a $(v,w_0)$-extremal metric of it is $(v,w)$-cscK with $w= w_0\ell^{\ext}$.  In this case,  $w$ is called  an {\it extremal weight}.  
\end{defn}

\subsubsection{Extremal K\"ahler metrics}
\label{eg_extremal}
We explain here how the problem of finding extremal K\"ahler metrics is a special case of the one for weighted setting. This corresponds to the $(1,1)$-extremal metric, i.e. $v=1$ and $w_0=1$.

\smallskip
Let $(X, \omega)$ be a compact K\"ahler manifold, and let $g$ be the Riemannian metric defined by $\om$. 
The metric $\omega$ is said to be extremal if the Hamiltonian vector field $\xi_\om = J \nabla \Scal(\omega) $ is a Killing vector field for $g$, i.e $\mathcal{L}_{\xi_\om}g=0$.  

\smallskip
Let $T$ be a maximal compact torus of $\Aut_{\red}(X)$ and $\alpha$ be a K\"ahler class. 
From \cite{Futaki_Mabuchi_1995} (see also \cite[Sec.~3.1]{Lahdili_2019}), the projection of $\Scal(\om)$ with respect to $L^2(\om^n)$-inner product, to the sub-space $\{m_\omega^\xi + c: \xi\in \frak{t}, c\in \mathbb{R}\}$ is written as $\Pi_\om^T(\Scal(\om)) = m_{\om}^{\xi_{\ext}}+ c_{\ext}$ for some $\xi_{\ext}\in \frak{t}$ where $\xi_{\ext}$ only depends  on $T$ and $ \{\omega\}$.  
In particular, it implies that $c_{\ext}$ also depend only on $T$ and $\{\omega\}$, since 
\[
    nc_1(X)\cdot \{\omega\}^{n-1}
    = \int_X \Scal(\om)\om^n 
    = \int_X \Pi^T_\om(\Scal(\om))\om^n 
    = \int_X m_{\om}^{\xi_{\ext}} \omega^n + c_{\ext} \{\omega\}^n,
\]
where the last integral only depends  $\{\omega\}$  and $ \xi_{\ext}$ by \eqref{eq_fixed_vol}. 
One can also normalize $\int_X m_{\om}^{\xi_{\ext}} \omega^n=0$ to get $c_{\ext}= \overline{s}:= n\frac{ c_1(X)\cdot \{\omega\}^{n-1}}{\{\om\}^n}$.
Therefore, we obtain the unique affine function $\ell^{\ext}(p) = \langle \xi_{\ext}, p \rangle + c_{\ext}$ defined in \eqref{eq_ext_affine} and the problem of finding extremal metric is equivalent to the one for $(1, \ell^{\ext})$-cscK metric.  

\subsubsection{Extension on $\CE^{1,T}$ and coercivity} 
Denote by $\CE^{1,T}_\om := \CE^1(X,\om)^T \subset \CE^1(X,\om)$ the space of $T$-invariant finite energy potentials and $\CE^{1,T}_{\rm norm}(X,\om) := \{u \in \CE^{1,T}_\om \mid \E_\omega( u)=0\}$. 
From \cite[Prop.~3.41]{Boucksom_Jonsson_Trusiani_2024}, one can extend all functionals above on $\CE^{1,T}_\om$.

\smallskip
Since $\Aut_0(X)$ acts trivially on cohomologies, for each $\sm \in \Aut_0(X)$, one can find a unique function $\tau_\sm \in \PSH(X,\om) \cap \CC^\infty(X)$ such that 
\[
    \sm^\ast \om = \om + \ddc \tau_\sm,
    \quad \E(\tau_\sm) = 0.
\]
For $u \in \PSH(X,\om)$ and $\sm \in \Aut_0(X)$, define $\sm \cdot u := \sm^\ast u + \tau_\sm$.
Set
\begin{equation}\label{eq_def_d_1}
    d_{1,T}(u,0) := \inf_{\sigma\in T_{\BC}} d_1(\sigma \cdot u,0).
\end{equation}

\begin{defn}
The weighted Mabuchi functional $\M_{v, w}$ is $T_\BC$-coercive if there exist constants $A > 0$ and $B > 0$ such that
\begin{equation*}
    \M_{v,w} (\phi)\geq A d_{1,T} (\phi,0)-B
\end{equation*}
for any $\phi\in \CE^{1,T}_{\rm norm}(X,\om)$. 
\end{defn}
 By definition, if   $\M_{v, w}$ is $T_\BC$-coercive, then it  is $T_\BC$-invariant since it is bounded from below (cf. \cite[Section 1.1]{Boucksom_Jonsson_Trusiani_2024}).
\subsection{Weighted variational formalism in the singular setting}
We recall here the weighted formalism for the variational problem of singular weighted cscK metrics on K\"ahler varieties with log terminal singularities as introduced in \cite[Sec.~3.8, 4.1]{Boucksom_Jonsson_Trusiani_2024}.

\subsubsection{Reduced automorphism group and moment maps}\label{sect_red_aut}
Let $(X, \omega)$ be a normal compact K\"ahler variety. 
The automorphism group $\Aut(X)$ is a complex Lie group, whose Lie algebra is $H^0(X, T_X) \simeq H^0(X^\reg, T_{X^\reg})$ the space of holomorphic vector fields, that are global section of the tangent sheaf $T_X := \Hom(\Om_X^1, \mathcal{O}_X)$. 
There exists an $\Aut_0(X)$-equivariant resolution of singularities $\pi: \tilde{X} \to X$, i.e. $\pi$ is an isomorphism over $X^\reg$ and any $\sigma\in \Aut_0(X)$ can be extended to a unique $\sigma'\in \Aut_0(\tilde{X} )$. 
Hence, we get the inclusion $\Aut_0(X)\subset \Aut_0(\tilde{X} )$.  Moreover, any holomorphic vector field on $\tilde{X} $ descends to an element in   $H^0(X^\reg, T_{X^\reg})$, so we get $\Aut_0(X)\simeq \Aut_0(\tilde{X} )$. 

\smallskip
Given an $\Aut_0(X)$-equivariant resolution of singularities $\pi: \wX \to X$, the {reduced automorphism group} $\Aut_{\red}(X) \subset \Aut_0(X)$ is defined as the subgroup of $\Aut_0(X) \simeq \Aut_0(\wX)$ acting trivially on the Albanese torus of $\wX$. 
This definition is independent of the choice of $\Aut_0(X)$-equivariant resolution of singularities $\pi: \tilde{X} \rightarrow X$, by the bimeromorphic invariance of Albanese torus (cf. \cite[Prop. 9.12]{Ueno_1975}).  
Since $\Aut_0(X)\simeq  \Aut_0(\tilde{X} )$, we get $\Aut_{\red}(X) \simeq \Aut_{\red}(\tilde{X} )$.

\smallskip
Denote by $\mathcal{Z}$ (reps. $\mathcal{Z}'$) the space of locally $dd^c$-exact real $(1,1)$-form (resp. currents) on $X$. 
In particular, any current $\theta\in \mathcal{Z}'$ can be written as $\theta = \om + dd^c u$ for some $\om \in \mathcal{Z}$ and $u$ is a distribution \cite[Sec.~4.6.1]{Boucksom_Guedj_2013}. 
Then the group $\Aut(X)$ acts on $\CZ$ and $\CZ'$. 
In particular, $\Aut_0(X)$ acts trivially on the the classes of $\CZ$ (cf. \cite[Lem. 3.54]{Boucksom_Jonsson_Trusiani_2024}): for any $\sigma \in \Aut_0(X)$, there exist $f \in \CC^\infty(X)$ such that $\sigma^* \om = \om + dd^c f$. 

\smallskip
Fix a compact torus $T\subset\Aut_{\red}(X)$ with Lie algebra $\mathfrak{t}$. 
Any $T$-invariant $(1,1)$-form (resp. current) $\omega \in \mathcal{Z}$ admits a moment map $m_\omega: X \rightarrow \mathfrak{t}^\vee$ which is a $T$-invariant smooth map (resp. a distribution), unique up to an additive constant, such that for each $\xi\in \mathfrak{t}$, $m_\om^\xi:=\langle m_\om, \xi\rangle: X\rightarrow \mathbb{R}$ satisfies $- \dd m_\om^\xi = i_\xi \omega = \omega(\xi, \cdot)$.
Denote by $P := \im(m_\omega)$ which is a compact subset in $\mathfrak{t}^\vee$ and  normalize  such that  $P= \im(m_{\omega_\phi})$ for all $T$-invariant smooth $\om$-psh function $\phi$. 
Take $v,w\in \CC^\infty(t^\vee, \BR)$ with $\inf_P v > 0$. 

\subsubsection{Singular weighted cscK metrics}\label{sect_singular_recall}
Suppose that $X$ is $\BQ$-Gorenstein, meaning that $X$ is normal and $K_X$ is $m$-Cartier for some $m \in \BN^\ast$. 
Below, we recall the definitions of adapted measures and log terminal singularities from \cite[Sec.~5-6]{EGZ_2009}.

\begin{defn}\label{def_adapted_measure}
Let $h^m$ be a smooth hermitian metric on $m K_X$.
Taking $\Om$ a local generator of $m K_X$, the {\it adapted measure} associated with $h^m$ is defined by 
\[
    \mu_h 
    := \ii^{n^2} \lt(\frac{\Om \w \overline{\Om}}{\abs{\Om}_{h^m}^2}\rt)^{1/m}.
\]
This definition does not depend on the choice of $\Om$, and two adapted measures differ by a smooth positive density.
The {\it Ricci form} of the adapted measure $\mu_h$ is given as 
\[
    \Ric(\mu_h) :=  \ddc \log |\Omega |^{2/m}_{h^m}
\] 
which belongs to $\CZ$. 
The $\BQ$-Gorenstein variety $X$ has {\it log terminal singularities} if the measure $\mu_h$ has finite masses near $X^\sing$. 
\end{defn}

We now further assume $X$ is log terminal. 
As in \cite[Sec.~5]{EGZ_2009}, $\Ric(\mu_h)$ is canonically attached to an element in $H^0(X, \CC^\infty_X/\PH_X)$ where $\CC^\infty_X$ (resp. $\PH_X$) is the subsheaf of continuous functions on $X$ that are local restrictions of smooth functions (resp. pluriharmonic functions) under local embeddings. 
The {\it first Chern class of $X$}, denoted $\{\Ric(\mu_h)\}$, is the image of $\Ric(\mu_h)$ in $H^1(X,\PH_X)$ via the connecting homomorphism $\{\bullet\}$ in the following exact sequence 
\[
    H^0(X, \CC^\infty_X) \to H^0(X, \CC^\infty_X/ \PH_X) \xrightarrow[]{\{\bullet\}} H^1(X, \PH_X) \to 0
\]
which is induced by the short exact sequence $0 \to \PH_X \to \CC^\infty_X \to \CC^\infty_X/ \PH_X \to 0$.

\smallskip
Assume that $h^m$ is a $T$-invariant metric on $mK_X$ so that  $\Ric(\mu_h)$ is an equivariant curvature form.
 Fix a compact torus $T\subset\Aut_{\red}(X)$ with Lie algebra $\mathfrak{t}$. 
Set $P := P_\om := m_{\om}(X) \subset \frak{t}^\vee$ and take $v \in \CC^\infty(P,\BR_{>0})$. 
For any $\phi\in \CH^T_{\om}$, the {\it weighted Ricci current} of $\om_\phi:=\om+dd^c\phi$ is defined as
\[
    \Ric_{v}^T(\om_\phi)=\Ric^T(\MA_v(\phi)) 
    := -\ddc \log \lt(\frac{v(m_{\phi})\om_\phi^n}{\mu_h}\rt) + \Ric(\mu_h).
\]
and the $v$-weighted scalar curvature is the distribution expressed by 
\[
    \Scal_v({\om_\phi}):= \tr_{\om_\phi} (\Ric^T (\om_\phi)).
\]
One can extend the weighted energy and the weighted Ricci energy on $\CE^{1,T}_\om$ (cf. \cite[Sec.~4.1.2]{Pan_To_Trusiani_2023}, \cite[Sec.~3.8]{Boucksom_Jonsson_Trusiani_2024}). 
Fix a $T$-invariant adapted measure $\mu_X$, for any $w \in \CC^\infty(P,\BR)$, the {\it weighted Mabuchi energy} $\M_{v,w}: \CE^{1,T}_\om \rightarrow \BR\cup\{+\infty\}$ can be expressed as  
\[
    \M_{v,w}:=\M_{v,w, \mu_X}:= \H_{v} + \E^{-\Ric(\mu_X)}_v + \E_{vw}. %{\clr \pm C?}.
\]

\begin{defn} 
Let $v \in \CC^\infty(P,\BR_{>0})$ and $w \in \CC^\infty(P, \BR)$. 
Then $\om_\phi \in \{\om\}$ is a singular $(v,w)$-cscK metric if $\phi \in \PSH(X,\om) \cap L^\infty(X)$ which is also smooth on $X^\reg$, and  
$$
    \Scal_v(\omega_\phi)= w(m_\phi) \quad \text{on } X^\reg.
$$
 Let $w_0\in \CC^\infty(P, \BR_{>0})$ and $\ell^{\ext}$ be
 the unique affine function $\ell^{\ext} =\ell^{\ext}_{\om,v,w_0}$ on $\frak{t}^\vee$ such that 
\[
    \langle\ell, \ell^{\ext}\rangle= \int_X \ell(m_\om) \Scal_v(\om)\MA_v(0).
\]
Then $\om_\phi$ is called a singular $(v,w_0)$-extremal metric if it is a singular $(v,w)$-cscK metric with $w = w_0\ell^{\ext}$.  
\end{defn}
Let $\pi: Y\rightarrow X$ be an $T$-equivariant resolution of singularities. We can also define $\M^{\rm rel}_{\pi^*\om}: \pi^*\CH^T_\omega \rightarrow \BR$ by
\[
    \M^{\rm rel}_{\pi^*\om}(\pi^* u):=\H_{\om_Y^n}(\pi^* u) + \E_{\pi^* \om}^{-\Ric(\om_Y)} (\pi^*u) + \E_{\pi^\ast\om, vw\ell^{\ext}} (\pi^\ast u).
\]
Under Condition \ref{cond_A}, following \cite[Lem.~4.22]{Boucksom_Jonsson_Trusiani_2024}, one can extend the weighted energy, the weighted Ricci energy, and $\M^{\rm rel}_{\pi^*\om}$ to $\CE^{1,T}_{\pi^*\om}$.

\section{A priori estimates}
In this section, we shall establish a priori estimates for weighted cscK equations on compact K\"ahler manifolds when the reference metrics are degenerating. 
These estimates are crucial to obtain the existence of singular weighted cscK metrics in Section \ref{sect_existence}. 

\smallskip 
In the sequel, we always assume the following setting:

\begin{sett}\label{sett:set_sec2} 
We fix $(X, \omega_X)$ to be an $n$-dimensional compact K\"ahler manifold, $T \subset \Aut_{\red}(X)$ a compact torus with Lie algebra $\mathfrak{t}$, and $\om$ a $T$-invariant K\"ahler metric on $X$ with the moment polytope $P$ of $\om$ and  $V:=\int_X \omega^n$. 
Take $v\in \CC^\infty(\frak{t}^\vee, \BR)$ such that $v > 0$ on $P$. 
\end{sett} 

\subsection{$L^\infty$-estimates}
In the following, we shall establish a uniform $L^\infty$-estimate following and generalizing the approach of Guo and Phong \cite[Thm.~3]{Guo_Phong_2022} with certain modifications (cf. \cite[Thm.~5.4]{Pan_To_Trusiani_2023}).

\begin{thm}\label{thm_uniform_cscK} 
Assume the Setting \ref{sett:set_sec2}. 
Let $\mu$ be a $T$-invariant  smooth volume form such that $\mu=g\omega^n_X $ with $K_0^{-1}\leq g\leq K_0$ for some constant $K_0 > 0$.
Suppose that $(\varphi, F) \in \CH^T(X,\om) \times \CC^\infty(X)^T$ is a solution to the coupled equations
\begin{equation*}
\begin{cases}
    v(m_\vph)(\omega + dd^c \varphi)^n = e^{F}\mu, 
    \quad \sup_X \varphi = 0,\\
    \Delta_{\varphi,v} F = -S + \tr_{\varphi,v}(\Ric(\mu)),
\end{cases}
\end{equation*}
for some $S\in \CC^\infty(X)^T$. 
In addition, assume that there are positive constants $K_1, K_3, K_4$ such that 
\begin{enumerate}[label=(\arabic*)]
    \item\label{cond_ric_1} 
    $-K_1 (\omega + dd^c\rho_1)\leq  {\rm Ric} (\mu)$,
    where $\rho_1$ is a $T$-invariant quasi-psh function, $\sup_X \rho_1 = 0$ and $\int_X e^{-K_1 \rho_1} d \mu < +\infty$; 

    \item\label{cond_ent} 
    ${\bf H}_\mu(\varphi)= \int_X \log\lt(\frac{\omega_\varphi^n}{\mu}\rt) \omega_\varphi^n \leqslant K_3$; 
    
    \item\label{cond_skoda}
    there exists $\alpha>0$ such that $\int_X e^{-\alpha(\phi-\sup_X \phi)} \mu \leqslant K_4$ for all $\phi\in {\rm PSH}(X, \omega)$.
\end{enumerate}
Then there is a uniform constant $C > 0$ depending only on $n, \max_X S, V, \alpha, K_1, K_3, K_4, C_v, \mu, \rho_1$ such that 
$$ 
    \|\varphi\|_{L^\infty} \leq C \, \text{ and } \, F \leq C. 
$$
Furthermore, if we have that   
\begin{equation*}
    \Ric(\mu) \leq K_2 (\om + \ddc \rho_2),
\end{equation*} 
for a $T$-invariant $\omega$-psh function $\rho_2 \in \CC^\infty(X\setminus Z)$, with $Z:=\{\rho_2 = -\infty\}$ and $\sup_X \rho_2=0$, there is a constant $C_2>0$ depending only on $n,\max_X |S|, V,\alpha, K_0, K_1, K_2, K_3, K_4, C_{v}, \mu, \rho_1$ and $\int_X \om\wedge \omega_X^{n-1}$ such that
$$F\geq K_2\rho_2-C_2.$$
\end{thm}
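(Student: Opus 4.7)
The plan is to establish the three bounds in sequence, each building on the previous ones.

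\textbf{Stage 1} ($\|\varphi\|_\infty\leq C$). I follow the Guo--Phong scheme of auxiliary complex Monge--Amp\`ere equations, in the weighted form developed in \cite[Thm.~5.4]{Pan_To_Trusiani_2023}. The inputs are the entropy bound $\H_\mu(\varphi)\leq K_3$ in (ii), the Skoda/$\alpha$-invariant bound in (iii), the density bound $\|g^{1/n}\|_\infty\leq K_0$, and the uniform positivity of $v$ on the moment polytope (via $C_v$). Comparing $\varphi$ with solutions $\psi_s$ of auxiliary Monge--Amp\`ere equations of the form $(\omega+\ddc\psi_s)^n = \tau_s\,\mathbf{1}_{\{-\varphi>s\}}\omega^n$ and using a Legendre-duality argument translates the entropy bound into an $L^\infty$ bound on $-\varphi$. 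The weight $v$ is absorbed into $\om_\varphi^n$ via $C_v$ and plays no essential role here.

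\textbf{Stage 2} ($F\leq C$). I run Chen--Cheng's Moser iteration on the second equation, using the lower Ricci bound from (i). Setting $G:=F+K_1(\rho_1-\varphi)$, the computation
\[
    \Delta_{\varphi,v} G = -S + \tr_{\varphi,v}\bigl(\Ric(\mu)+K_1(\om+\ddc\rho_1)\bigr) - K_1\tr_{\varphi,v}\om + K_1\,\Delta_{\varphi,v}\varphi
\]
combined with (i) and \eqref{eq_compare_trace_2} shows $\Delta_{\varphi,v} G \geq -C_0(1+\tr_{\varphi,v}\om_\varphi)$. Testing by $e^{pG_+}\MA_v(\varphi)$ via \eqref{eq:weighted_IPP}, iterating in $p$ with the Sobolev inequality on $(X,\om)$, and initializing the iteration with $\H_\mu(\varphi)\leq K_3$, $\|\varphi\|_\infty\leq C$ from Stage 1, the Skoda bound (iii), and the integrability $\int e^{-K_1\rho_1}d\mu<\infty$ in (i) produces a uniform $L^\infty$ bound on $G_+$. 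Since $\rho_1\leq 0$ and $\varphi\leq 0$, this yields $F\leq C'$.

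\textbf{Stage 3} ($F\geq K_2\rho_2-C_2$). For the refined lower bound I apply the maximum principle to
\[
    H := F - K_2\rho_2 + \lambda\varphi, \qquad \lambda := 2K_2+1.
\]
Since $-K_2\rho_2\to+\infty$ along $Z=\{\rho_2=-\infty\}$ while $F$ and $\varphi$ are smooth and bounded on $X\setminus Z$, the infimum of $H$ is attained at some $p\in X\setminus Z$. The upper Ricci bound gives $\Ric(\mu)-K_2\ddc\rho_2 = \bigl(\Ric(\mu)-K_2(\om+\ddc\rho_2)\bigr)+K_2\om\leq K_2\om$. Combining with $\Delta_{\varphi,v}\varphi = \tr_{\varphi,v}(\om_\varphi-\om)$ and \eqref{eq_compare_trace_2}, at $p$ we find
\[
    0\leq \Delta_{\varphi,v}H(p)\leq (K_2-\lambda)\tr_{\varphi,v}\om(p) + \lambda(n+C_v) + \max_X|S|,
\]
which forces $\tr_\varphi\om(p)\leq C$. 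The arithmetic--geometric inequality applied to $\om^{-1}\om_\varphi$ together with the first coupled equation (and $v\leq C_v$) yields $F(p)\geq -n\log\tr_\varphi\om(p) - C'\geq -C''$. Since $\lambda\varphi\leq 0$, evaluating $H(x)\geq H(p)$ gives $F(x)-K_2\rho_2(x)\geq -C_2$, as claimed.

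The main obstacle is the Moser iteration in Stage 2: the constants must be tracked so as to depend only on the data listed in the theorem, and in particular to remain uniform when the reference geometry degenerates in the intended application to $(\om_\vep)_\vep$ on the resolution. The integrability hypothesis $\int e^{-K_1\rho_1}d\mu<\infty$ in (i) is exactly what lets the iteration close; without it one cannot absorb the $e^{-K_1\rho_1}$ factors that appear when extracting a lower bound on $\Delta_{\varphi,v}G$ from the lower Ricci bound.
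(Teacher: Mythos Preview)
Your Stage~3 is essentially the paper's argument: the maximum principle applied to $H=F+(K_2+1)\varphi-K_2\rho_2$ (your $\lambda=2K_2+1$ works just as well), followed by AM--GM to extract the lower bound on $F$ at the minimum point.

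The gap is in Stage~2, and it is twofold.

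\emph{First, the final implication is false.} From $G=F+K_1(\rho_1-\varphi)\leq C$ and $\|\varphi\|_\infty\leq C_1$ you obtain $F\leq C+K_1C_1-K_1\rho_1$, and since $\rho_1\leq 0$ the term $-K_1\rho_1$ is nonnegative and blows up along $\{\rho_1=-\infty\}$. So no uniform upper bound on $F$ follows. The sign on $\rho_1$ is forced: you need $+K_1\rho_1$ in $G$ to make $\Delta_{\varphi,v}G\geq -C$ (so that the $\tr_{\varphi,v}\omega$ terms cancel), but that same sign prevents you from peeling off $\rho_1$ at the end. Reversing it destroys the subsolution inequality, since $dd^c\rho_1$ has no upper bound.

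\emph{Second, the Moser step itself uses a Sobolev constant for $(X,\omega)$,} which is not among the listed dependencies $n,\max_X S,V,\alpha,K_0,\dots,C_v,\mu,\rho_1$. In the intended application $\omega=\omega_\vep$ degenerates and this constant blows up, so even if the final implication were correct the estimate would not be uniform.

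The paper handles Stages~1 and~2 \emph{together} by a single maximum-principle argument that avoids Sobolev entirely. One solves an auxiliary equation
\[
V^{-1}(\omega+dd^c\phi_k)^n=\frac{\tau_k(-\varphi+\lambda F)+1}{A_k}\,e^F\mu,
\]
and applies the maximum principle to $\Phi=-\epsilon(-\phi_k+\Lambda)^{n/(n+1)}-\varphi+\lambda(F+K_1\rho_1)$. The term $\tr_{\omega_\varphi}\omega_{\phi_k}$ arising from $\Delta_{\varphi,v}\Phi$, combined via AM--GM with the auxiliary equation, absorbs the bad $\tr_{\omega_\varphi}\omega$ contribution; the $+K_1\rho_1$ merely pushes the maximum off $\{\rho_1=-\infty\}$. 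The output is $-\varphi+\lambda(F+K_1\rho_1)\leq C(-\phi_k+\Lambda)^{n/(n+1)}$, which together with strong openness and (iii) gives $e^F\in L^p(\mu)$ for some $p>1$. Ko\l odziej's estimate then yields $\|\varphi\|_\infty\leq C$; bootstrapping gives $\|\phi_k\|_\infty\leq C$; and the displayed inequality finally gives $F\leq C$. The point is that $\varphi$ and $F$ must be coupled in the test function---separating them, as you do, is exactly what fails.
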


\begin{proof}
Consider $\tau_k: \BR \to \BR_{>0}$ a sequence of positive smooth functions decreasing towards the function $x \mapsto x\cdot \1_{\BR_{>0}}(x).$  
Let $\phi_k$ be a solution to the following auxiliary complex Monge--Amp\`ere equation
\begin{equation}\label{eq_aux_MA}
    V^{-1} (\om + \ddc \phi_{k})^n = \frac{\tau_k(-\varphi + \ld F)+1}{A_k} e^F \mu, 
    \quad \sup_X \phi_k=0,
\end{equation} 
where 
$$
    A_k = \int_X (\tau_k (-\vph  + \ld F) +1) e^F \mu
    \xrightarrow[k \to +\infty]{}  
    \int_X (-\vph + \ld F)_+ e^F \mu + \int e^F \mu = A_\infty.
$$
We remark that $\phi_k$ is also $T$-invariant since the RHS of \eqref{eq_aux_MA} is $T$-invariant. 
Applying Young's inequality with $\chi(s)=(s+1)\log (s+1)-s$ and $\chi^\ast(s) = e^s - s - 1$, 
\[
    \int_X (-\varphi) e^F\mu \leq \int_X \chi(\alpha^{-1}e^F)\mu + \int_X \chi^*(-\alpha\varphi)\mu
\]
where $\alpha > 0$ is the constant in \ref{cond_skoda}. 
It follows from \ref{cond_ent} and \ref{cond_skoda} that $C_v^{-1} V \leq  A_\infty \leq C(K_3, K_4, V, \alpha)$.
Thus $C_v^{-1} V \leq A_k \leq C_1 = C(K_3, K_4, V, \alpha)$ for $k$ sufficiently large. 

\smallskip
By strong openness \cite{Berndtsson_2013_openness_conjecture, Guan_Zhou_2015_strong_openness}, we have a constant $\dt >0$ such that $I := \int_X e^{-(K_1+\delta)\rho_1}< +\infty$. 
Using Demailly's approximation theorem \cite{Demailly_1992} (see also \cite[Sec.~14B]{Demailly_amag}) and Demailly--Koll\'ar's convergence result \cite[Main Thm. 0.2 (2)]{Demailly_Kollar_2001}, there exists a sequence quasi-plurisubharmonic functions $(\rho_{1, k})_k$ with analytic singularities and smooth away from their singular locus, such that $-K_1(\omega+dd^c\rho_{1,k}) -\frac{1}{k}\omega \leq \Ric(\mu)$, $\rho_{1,k} \to \rho_1$ in $L^1$ as $k \to +\infty$, and $\int_X e^{-(K_1+ \delta)\rho_{1, k}}\leq 2I$ for all $k \gg 1 $. 
Hence, $-K_1' (\omega +dd^c \rho'_{1,k}) \leq \Ric(\mu) $ where $K_1'= K_1+\frac{1}{k}$ with $k>1/\delta$ and $\rho'_{1,k}= \frac{K_1}{K_1 +1/k}\rho_{1,k}$. 
Replacing $\rho_{1}$ by $\rho'_{1,k}$ and $K_1$ by $K_1'$, one can assume $\rho_1$ has analytics singularities. 

\smallskip
Consider the function 
$$
    \Phi=-\epsilon(-\phi_k+\Lambda)^{\frac{n}{n+1}}-\varphi+\lambda (F+K_1\rho_1) 
$$ 
with $\Ld = \left(\frac{2n}{n+1} \epsilon\right)^{n+1}$ and $\epsilon = \left(\frac{(n+1)(n + \lambda \bar s + L)}{n^2}\right)^{\frac{n}{n+1}} A_k^{\frac{1}{n+1}}$, where $\bar s := \max_X S$, $L := \lambda(C_{v, \mu} +K_1 C_v) +2 C_v$ with $C_{v,\mu} > 0$ a constant independent of $\vph$ so that $\langle (\log v)'(m_\vph), m_{\Ric(\mu)}\rangle\geq -C_{v, \mu}$, and $\lambda > 0$ is a constant such that $n + \lambda \bar s > 0$ and $\ld K_1 < 1/2$. 
Since $\rho_1(x) \to -\infty$ as $x \to  Z_1:=\{\rho_1=-\infty\}$, the maximal points of $\Phi$ only occur in $X \setminus Z_1$. 
Fix $x_0 \in X \setminus Z_1$ a maximal point of $\Phi$. 
At $x_0$, we have 
\begin{align*}
    0&\geq \Delta_{\varphi,v}\Phi  
    = \Delta_{\om_\varphi}\Phi  
    + \langle (\log v)'(m_\vph), m_{dd^c \Phi} \rangle\\
    &\geq \frac{\epsilon n}{n+1}(-\phi_k + \Lambda) ^{ -\frac{1}{n+1}} \Dt_{\om_\varphi} \phi_k - \Dt_{\om_\vph} \vph  + \ld \Dt_{\vph,v} (F + K_1 \rho_1)  \\
    & \quad + \frac{\epsilon n}{n+1} (-\phi_k+ \Lambda)^{\frac{-1}{n+1}}\langle (\log v)'(m_\vph), m_{dd^c \phi_k}\rangle - \langle (\log v)'(m_\vph), m_{dd^c \vph} \rangle.
\end{align*}
Since $-C_v \leq \langle (\log v)'(m_\phi), m_{dd^c \psi} \rangle\leq C_v$ for any $\phi, \psi \in \PSH(X,\om)^T$ (see \eqref{eq_compare_trace_2}), we infer that 
\begin{align*}
    0 &\geq \frac{\epsilon n}{n+1} (-\phi_k + \Lambda)^{-\frac{1}{n+1}}  \Delta_{\om_\varphi} \phi_k - \Delta_{\om_\varphi} \varphi  + \lambda \Delta_{\varphi, v} (F + K_1\rho_1) - C_v \lt(\frac{\epsilon n}{n+1} \Lambda^{\frac{-1}{n+1}} + 1\rt)\\
    &= \frac{\epsilon n}{n+1}(-\phi_k+ \Lambda) ^{ -\frac{1}{n+1}}  (\tr_{\om_\varphi} \omega_{\phi_k} - \tr_{\om_\varphi}\omega) -\tr_{\om_\varphi} (\omega_\varphi - \omega) + \lambda (-S + \tr_{\om_\varphi} ({\rm Ric}(\mu) + K_1dd^c\rho_1))\\
    &\quad + \lambda \left(\langle (\log v)'(m_\vph), m_{\Ric(\mu)  }\rangle + K_1 \langle (\log v)'(m_\vph), m_{dd^c \rho_1}\rangle   \right)  -2 C_v.
\end{align*} 
Then 
\begin{align*}
    0&\geq  \frac{n^2\epsilon}{n+1} (-\phi_k+\Lambda)^{-\frac{1}{n+1}} \left( \frac{\tau_k(-\varphi+\lambda F)+1}{A_k}\right)^{1/n} -n- \lambda \bar s -L+  \left(1-\frac{n\epsilon}{n+1}\Lambda^{-\frac{1}{n+1}}  -\lambda K_1 \ \right)\tr_{\omega_\varphi}\omega\\
    &\geq \frac{n^2\epsilon}{n+1} (-\phi_k+\Lambda)^{-\frac{1}{n+1}} \left( \frac{\tau_k(-\varphi+\lambda F)+1}{A_k}\right)^{1/n} -n- \lambda \bar s -L,
\end{align*} 
where the second inequality above uses $\Ld = \left(\frac{2n}{n+1} \epsilon\right)^{n+1}$ and $\lambda K_1<1/2$; hence, $1-\frac{n\epsilon}{n+1}\Lambda^{-\frac{1}{n+1}}  -\lambda K_1 \geq 0$. 
Therefore we obtain
$$(\tau_k(-\varphi+\lambda F)+1)^{1/n}\leq \frac{(n+\lambda \bar s+L)(n+1)}{n^2\epsilon} A_k^{1/n}(-\phi_k+\Lambda)^{\frac{n}{n+1}},$$
and thus, 
\begin{equation}\label{eq:ineq_test}
   -\varphi+\lambda (F+K_1\rho_1) \leq -\varphi+\lambda F\leq  \left(\frac{(n+\lambda \bar s+L)(n+1)}{n^2\epsilon}\right)^n A_k(-\phi_k+\Lambda)^{\frac{n}{n+1}};
\end{equation} 
therefore, $\Phi(x_0) \leq 0$ and $\Phi \leq 0$ on $X$. 
By the choice of $\epsilon, \Ld$ and $V \leq  A_k\leq C(K_3, K_4, V)$, and Young's inequality, we derive that for any $\delta>0$
\begin{equation}\label{ineq_F}
    \lambda (F+K_1\rho_1)
    \leq -\varphi+\lambda (F+K_1\rho_1)
    \leq C(V, K_1,K_3,K_4) (-\phi_k + \Lambda)^{\frac{n}{n+1}}
    \leq -\delta \phi_k+ C_2 , 
\end{equation}
with $C_2=C(\delta,V,K_2, K_3, K_4)$.

\smallskip
From Condition~\ref{cond_A}, we have $\int_Y e^{-K_1 \rho_1} \mu < +\infty$. 
The strong openness \cite{Berndtsson_2013_openness_conjecture, Guan_Zhou_2015_strong_openness} yields a constant $0 < a \ll 1$ such that $\int_X e^{-(1+a) K_1\rho_1} \mu \leq C_a$ for some constant $C_a > 0$.
By \eqref{ineq_F}, H\"older inequality and \ref{cond_skoda}, for $\bt = \frac{1 + a/2}{\ld}$ and $\dt > 0$ such that $\dt \bt < \af / \gm^*$, with $\gm = \frac{1+a}{1+a/2}$ and $\frac{1}{\gm} + \frac{1}{\gm^\ast} = 1$, we obtain
\begin{equation}\label{ineq_exp_F}
    \int_X e^{\bt \ld F} \mu 
    \leq e^{\bt C_2} \int_X e^{-\frac{\af}{\gm^*} \phi_k - (1+\frac{a}{2}) K_1 \rho_1} \mu
    = e^{\bt C_2} \int_X e^{-\frac{\af}{\gm^*} \phi_k -  (1+a) K_1 \rho_1/\gm} \mu
    \leq C(a, C_a, \af).
\end{equation}

\smallskip
By a refined version of Ko{\l}odziej's $L^\infty$-estimate \cite{Kolodziej_1998} (see \cite[Thm.~A]{DGG2023} for the version we referred), a uniform control $C_{v}^{-1} \leq v(m_\vph) \leq C_{v}$ and \eqref{ineq_exp_F}, we obtain 
$\|\varphi\|_{L^\infty}\leq C(n,V,\alpha, K_1,K_3,K_4, C_{v}, \mu, \rho_1)$. Also, for $a \in (0,1)$ as above and $p':= 1+ a/4$,  we have the $y^{p'}\leq C_{a} e^{ay/4}, \forall y\geq 0$; hence
\begin{equation*}
    \int_X |F|^{p'}e^{p'F}\mu\leq C_a\int_Xe^{p'F+ \frac{a}{4}|F|}\mu \leq C_a\int_X e^{ (p'-\frac{a}{4})F }\mu +  C_a\int_X e^{ (p'+\frac{a}{4})F }\mu. 
\end{equation*}
Since $p'+ a/4= 1+a/2= \beta \lambda$, it follows from the estimate \eqref{ineq_exp_F} and H\"older inequality that
\begin{align}\label{est_FeF_Lp}
    \int_X |F|^{p'}e^{p'F}\mu\leq  C(a, C_a, \af),  \text{ for } p'=1+a/4. 
\end{align}
Combining with the $L^\infty$-estimate of $\vph$ yields  
\[
    \|(\tau_k(-\varphi +\lambda F)+1 )e^F \|_{L^{p'}(X, \mu)}\leq  C(n,\alpha, V, K_1, K_3, K_4, C_v, \mu, \rho_1)
\]   
for all $k>0$ sufficiently large. 
Again, Ko{\l}odziej's $L^\infty$-estimate yields a uniform bound $\|\phi_k\|_{L^\infty}\leq C(n,V,\alpha, K_1,K_3,K_4, C_v, \rho_1)$. 
Then the inequality \eqref{eq:ineq_test} provides a uniform upper bound for $F$. 

\smallskip 
In the second part, we consider the function $H := -F-(K_2+1)\varphi+ K_2\rho_2 + C_3$ where $$C_3 = \frac{1}{V}\int_X \left(F+(K_2+1)\varphi - K_2\rho_2 \right)\omega_\vph^n = \frac{1}{V}\int_X \left(F+(K_2+1)\varphi - K_2\rho_2 \right)v(m_\vph)^{-1}e^{F}\mu,$$ so that $\int_X H \omega_\vph^n = 0$.  
One can check that $C_3$ is uniformly bounded from \eqref{est_FeF_Lp}, the upper bound of $F$, the $L^\infty$-estimate for $\vph$, and the quasi-plurisubharmonicity of $\rho_2$. 
We have 
\begin{align*}
  \Delta_{\varphi, v} H
    &= S - \tr_{\omega_\varphi} (\Ric(\mu)) 
    -(K_2+1)n-(K_2+1) \tr_{\omega_\varphi} \omega 
    + K_2 \tr_{\omega_\varphi} (\om + dd^c \rho_2) - K_2 \tr_{\om_\vph} \om\\
    &\quad -\langle (\log v)'(m_\vph ), m_{\Ric(\mu)} \rangle
    - (K_2+1) \langle (\log v)'(m_\vph),  m_{dd^c \vph} \rangle 
    +K_2 \langle (\log v)'(m_\vph),  m_{dd^c \rho_2} \rangle \\
    &\geq S - \tr_{\omega_\varphi} (\Ric(\mu) -K_2 (\om+ dd^c\rho_2)) - (K_2+1) n + \tr_{\omega_\varphi} \omega + C_{v, \mu} - 2(K_2+1)C_{v} \\
    &\geq S -(K_2+1)n   - C_{v, \mu} - 2(K_2+1)C_{v}.
 \end{align*}
This implies that $H$ is a quasi-subharmonic function with $\Delta_{\varphi,v} H \geq -b$ for some $b>0$ depending on $n,S, K_2, C_v, C_{v, \mu}$. Moreover, we have $\omega_\vph^n = f\mu^n $ with $f= v^{-1}(m_\vph)e^F$ satisfying $\|f\|_{L^{1+a/2}(X,\mu)} \leq C(a, C_a, \alpha, C_v) $ by \eqref{ineq_exp_F} and $\mu= g\omega_X^n$ with $K^{-1}_0\leq g\leq K_0$.
Therefore, from Lemma~\ref{lem_upper_bound_sh} below, there is a uniform constant $C$ depending on $C(a, C_a, \alpha, C_v)$, $K_0$ and $\int_X \omega_\vph\wedge\omega_X^{n-1} = \int_X \omega\wedge\omega_X^{n-1}$ such that 
\begin{align*}
    \sup_X H &\leq C\left( b + \frac{1}{V_{\omega_\vph}} \int_X |H| \omega^n_\vph\right)\\
    &= C\left( b + \frac{1}{V} \int_X \big|-F - (K_2+1)\vph + K_2 \rho_2+C_3\big|  v^{-1}(m_\vph) e^F \mu\right)\\
    &\leq C\left( C_4 +  \frac{C_v(K_2+ 1)\|\vph\|_{L^\infty}}{V} \int_X e^F \mu +\frac{C_v K_2}{V\alpha'} \int_X  e^{-\alpha' \rho_2} e^{F}   \mu + \frac{C_v}{V}\int_X |F| e^F\mu\right).
\end{align*}
Here we choose $\alpha' = \frac{a+2}{a}\alpha$ so that by H\"older inequality, \ref{cond_skoda} and  \eqref{ineq_exp_F}
\begin{equation*}
    \int_X e^{-\alpha' \rho_2} e^F\mu\leq \left(\int_X e^{-\alpha\rho_2 }\mu   \right)^\frac{a}{a+2}\left(\int_Xe^{(1+a/2)F}\mu\right)^{\frac{2}{2+a}}\leq K_4^\frac{a}{a+2} C(a, C_a, \af)^{\frac{2}{2+a}}.
\end{equation*}
By \eqref{ineq_exp_F} and \eqref{est_FeF_Lp}, we also have 
\[
 \int_X e^{F}\mu\leq C(a, C_a, \af) \text{ and } \int_X |F|e^{F}\mu\leq C(a, C_a, \af).\]
Therefore, we obtain a uniform upper bound for $H$, and this yields a uniform lower bound for $F$ as required.
\end{proof}

We now provide a proof of a version of \cite[Lem.~5.1]{Guo_Phong_Song_Sturm_2024} (see also \cite[Lem.~2.1]{Guedj_To_24}) for the weighted Laplacian. 

\begin{lem}\label{lem_upper_bound_sh}
Assume the Setting \ref{sett:set_sec2}.  
Fix $b>0$ and let $u$ be in $L^1(X, \omega_X^n)$ such that $u\in C^2(\overline{\Omega})$ for some open set $\Omega \supset \{u>0\}$,   $\Delta_{\omega, v} u \geq -b$ on $\Omega$, and  $\int_X u \omega^n=0$.
Then 
$$
\sup_X u \leq C \left[ b+ \frac{1}{V_{\omega}}  \int_X |u| \omega^n \right],
$$
where $C$ only depends on $C_v$, $ \int_X \omega\wedge \omega_X^{n-1}$ and $ \int_X  \left(\frac{1}{V_\omega}\frac{\omega^n}{\omega_X^n}\right)^p \omega_X^n$, for any fixed $p>1$.  
\end{lem}

\begin{proof}
After scaling $u$, we may assume that  $b=n$.
Set $u_+=\widetilde{\max}(u,0)$, where 
$\widetilde{\max}$ denotes a convex regularized maximum satisfying $\max(x,0) \leq \widetilde{\max} (x,0) \leq 1+ \max(x,0)$.
Let $\vph \in \PSH(X,\omega)$ be the unique solution to
$$
    (\omega+dd^c \vph)^n=\frac{1+u_+}{1+M} \omega^n
    \quad\text{and}\quad \sup_X \vph=-1
$$
where  $M=\int_X u_+  \frac{\omega^n}{V_\omega} \leq 1+ \frac{1}{2} \int_X |u| \frac{\omega^n}{V_\omega}$.

Set $H=1+u-\vep (-\vph)^{\alpha}$, where $\alpha=\frac{n}{n+1}$ and 
$\vep\geq 1$ will be chosen later.
We now show that $H \leq 0$. 

Let $x_0$ be a maximum point of $H$.  
If $u(x_0)\leq 0$, then $H(x_0)\leq 0$, since $-\vph\geq 1$ and $\vep\geq 1$. 
Otherwise, we must have $x_0\in \Omega$. 
In this case, the following computations are carried out inside $\Omega$. 
Since
$$
-dd^c (-\vph)^{\alpha}=\alpha(1-\alpha) (-\vph)^{\alpha-2} d\vph \wedge d^c \vph+\alpha(-\vph)^{\alpha-1} dd^c \vph,
$$
we obtain $\Delta_{\omega}(-\vep (-\vph)^{\alpha}) \geq \alpha \vep(-\vph)^{\alpha-1} \Delta_{\omega} \vph
 \geq n \alpha \vep(-\vph)^{\alpha-1} \left[ \left(\frac{1+u_+}{1+M} \right)^{\frac{1}{n}} -1 \right]$.
Hence, one can infer that
 \begin{align*}
     \Delta_{\omega,v}H &\geq -n+n \alpha \vep(-\vph)^{\alpha-1} \left[ \left(\frac{1+u_+}{1+M} \right)^{\frac{1}{n}} -1 \right] + \vep\alpha (-\vph)^{\alpha-1} \tr_{\omega}(d\log v(m_\omega)\wedge d^c  \vph  )\\
     &\geq  -n + n \alpha \vep(-\vph)^{\alpha-1} \left[ \left(\frac{1+u_+}{1+M} \right)^{\frac{1}{n}} -1-2C_v \right].
 \end{align*}
In the second inequality above, we apply \eqref{eq_compare_trace_2} to gain the following estimate: 
$$|\tr_{\omega}(d\log v(m_\omega)\wedge d^c  \vph ) | = | \Delta_{\omega, v} \vph -\Delta_{\omega}\vph |= |\tr_{\omega, v} \omega_\vph -\tr_{\omega} \omega_\vph -(\tr_{\omega, v} \omega -\tr_{\omega} \omega) |\leq 2C_v.$$
 
Then, at $x_0\in \Omega$, we have 
\begin{equation}
    0 \geq \Delta_{\omega, v}H
    \geq -n + n \alpha \vep(-\vph)^{\alpha-1} \left[ \left(\frac{1+u_+}{1+M} \right)^{\frac{1}{n}} -1-2C_v \right].
\end{equation}
Thus using $(-\vph)^{1-\alpha} \geq 1$, at $x_0$, we obtain
$$
(1+ (2C_v+1)\alpha \vep)(-\vph)^{1-\alpha} \geq (-\vph)^{1-\alpha}+(2C_v+1)\alpha \vep \geq 
\alpha \vep \left(\frac{1+u_+}{1+M} \right)^{\frac{1}{n}}.
$$
Therefore,
$$
\vep (-\vph)^{\alpha}=\vep (-\vph)^{n(1-\alpha)} \geq \frac{\alpha^n \vep^{n+1}}{(1+(2C_v+1)\alpha \vep)^n} \frac{1+u_+}{1+M}=1+u_+\geq 1+ u,
$$
where $\vep$ is chosen such that $\frac{\vep^{n+1}\alpha^n}{(1+(2C_v+1) \vep)^n}=1+M$. This shows that $H (x_0)\leq 0$, and thus $H\leq 0$ on $X$.
Hence $$1+u \leq \vep (-\vph)^{\alpha},$$
and so
$$u_+ \leq \vep (-\vph)^{\alpha}.$$
Following the same argument in \cite[Lem.~2.1]{Guedj_To_24}, one obtains a uniform bound for $\vph$. 
Hence
$$
\sup_X u \leq \vep  (-\vph)^{\alpha} \leq c_n [1+M] C_0,
$$
and the conclusion follows since $M\leq \frac{1}{2V_\omega} \int_X |u| \omega^n+1$.
\end{proof}

\subsection{Local $L^p$-estimate for Laplacian}
Let $\om_X$ be another $T$-invariant K\"ahler metric on $X$. 
Since $P_{\om_X} = \im(m_{\om_X})$ is compact, one can further assume that 
\[
    C_v^{-1} 
    \leq |v| + \sum_\af |v_\af| + \sum_{\af,\bt} |v_{\af\bt}| 
    \leq C_v \quad\text{on } P_{\om_X}.
\]
Consider the following weighted cscK equations 
\begin{equation}\label{eq:wcscK_C2}
\begin{cases}
    v(m_\vph)(\om + dd^c \vph)^n = e^{F} \omega_X^n, \quad \sup_X \vph = 0,\\ 
    \Dt_{\vph, v} F = - S+ \tr_{\vph, v} (\Ric(\om_X)).  
\end{cases}
\end{equation} 
In this section, we shall further assume $\frak{t}^\vee \ni \zeta \mapsto \log v(\zeta)\in \BR$ to be {\it concave}.
With the concavity condition on $\log v$, from Lemma~\ref{lem:weighted_AY}, we have the following weighted Aubin--Yau type inequality: 
\[
    \Dt_{\vph, v} \log \tr_{\om_X} \om_\vph 
    \geq \frac{\Delta_{\omega_X} F}{\tr_{\om_X} \om_\vph} - \CB \tr_{\om_\vph}\om_X - \CC
\]
for some uniform constant $\CB, \CC > 0$.

\begin{prop}\label{prop_c^2} 
Suppose that $(\vph,F) \in \CH^T(X,\om) \times \CC^\infty(X)^T$ is a solution to \eqref{eq:wcscK_C2}.
Fix $p>1$. 
Assume that $\omega \leq C_\om\omega_X$, $\Ric(\omega_X) \leq A\omega_X$, $\Bisec(\om_X) \geq - B$,  $\omega+dd^c\rho\geq C_\rho \omega_X$ with $C_\rho > 0$, and 
\begin{equation*}
    \|\varphi \|_{L^\infty} \leq C_0,\quad  
    F \leq C_0, \quad \text{and}\quad  F \geq K_2 \rho-C_0,
\end{equation*}
where $\rho\in \CC^\infty(X\setminus \{\rho = -\infty\})$ is a $T$-invariant $\omega$-psh function. 
Then for any $\mathcal{K}$ compact set of $X\setminus \{\rho=-\infty\}$, one has the following estimate
\[
    \|\tr_{\omega_{X}} \omega_\vph\|_{L^{2p+2}(\mathcal{K}, \omega_X^n)}\leq C_1, 
\]
where $C_1$ only depends on $\mathcal{K},n,p, A, B, C_0, C_\rho, \min_X S$.
\end{prop}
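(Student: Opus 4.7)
The plan is to adapt the Chen--Cheng integral $C^2$-estimate \cite[Thm.~3.1]{Chen_Cheng_2021_1} (in the form revisited in \cite[Thm.~4.1]{Deruelle_DiNezza_2022}) to the present weighted setting, and to introduce a cutoff to handle the possible degeneracy of $e^{-F}$ near $\{\rho=-\infty\}$. Set $u := \tr_{\om_X}\om_\vph$, fix the compact $\CK \Subset X \setminus \{\rho=-\infty\}$, and pick a $T$-invariant smooth cutoff $\eta \in \CC^\infty_c(X\setminus\{\rho=-\infty\})$ equal to $1$ on $\CK$. On $\supp(\eta)$ the function $\rho$ is bounded, so the combined bounds $K_2\rho - C_0 \leq F \leq C_0$ make the three reference measures $\om_X^n$, $\om_\vph^n$ and $\MA_v(\vph) = e^F\om_X^n$ mutually uniformly comparable; this is what localization to $\CK$ buys us.

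The pointwise starting point is the weighted Aubin--Yau inequality (Lemma~\ref{lem:weighted_AY}, where the concavity of $\log v$ is used):
$$ \Dt_{\vph,v}\log u \,\geq\, \frac{\Dt_{\om_X,v}F}{u} - \CB\,\tr_{\om_\vph}\om_X - \CC. $$
I would multiply this inequality by the non-negative test function $u^{2p+1}\eta^{2q}$, with $q$ large to be chosen, and integrate against $\MA_v(\vph)$. On the left-hand side, the weighted integration by parts formula \eqref{eq:weighted_IPP} produces the non-positive quantity
$$ -(2p+1)\int_X u^{2p-1}\eta^{2q}\lvert\nabla u\rvert^2_{\om_\vph}\MA_v(\vph) $$
plus a cross term in $d\eta$. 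On the right-hand side, the dangerous contribution $\int_X u^{2p}\eta^{2q}\Dt_{\om_X,v}F\cdot e^F\om_X^n$ is transformed by a second integration by parts, applying \eqref{eq:weighted_IPP} this time with reference metric $\om_X$ and test function $u^{2p}\eta^{2q}e^F/v(m_{\om_X})$; this produces the second non-positive quantity
$$ -\int_X u^{2p}\eta^{2q}e^F\lvert\nabla F\rvert^2_{\om_X}\om_X^n, $$
together with cross terms in $\nabla u$, $\nabla\eta$ and $\nabla\log v(m_{\om_X})$.

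All cross terms are split and absorbed, via Cauchy--Schwarz, into the two non-positive quantities above; this uses (i) the uniform comparability of the metrics on $\supp(\eta)$, (ii) the boundedness of $v$ and its derivatives on the compact moment polytope $P_{\om_X}$, and (iii) the $L^\infty$-bound on $\vph$. To produce a term controlling $\int_X u^{2p+2}\eta^{2q}\om_X^n$, I rely on the pointwise bound $\tr_{\om_\vph}\om_X \leq C\,u^{n-1}e^{-F}$ (Newton's inequality combined with the Monge--Amp\`ere equation $\om_\vph^n = e^F v(m_\vph)^{-1}\om_X^n$) together with the comparison $\lvert\nabla u\rvert^2_{\om_X}\leq u\lvert\nabla u\rvert^2_{\om_\vph}$ arising from $\om_\vph\leq u\om_X$, and finally a Sobolev inequality applied to $u^{p+1}\eta^q$ which converts the $|\nabla u|^2_{\om_X}$-energy into an $L^{2(p+1)}$-norm. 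Iterating in $p$, with the topological $L^1$-bound $\int_X u\,\om_X^n = n\{\om_\vph\}\cdot\{\om_X\}^{n-1}$ as the base case, then closes the estimate on $\CK$.

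The main obstacle is the simultaneous absorption of the mixed gradient term $\langle du, dF\rangle$ into the two distinct negative quantities above, one involving $|\nabla u|^2_{\om_\vph}$ and the other $|\nabla F|^2_{\om_X}$; this forces a weighted Cauchy--Schwarz with a carefully chosen splitting exponent that exploits the comparability of the two metrics on $\supp(\eta)$. A secondary technical point is handling the first-order perturbation $\langle d\log v(m_{\om_X}), dF\rangle$ coming from the weighted Laplacian, which is controlled using the uniform bounds on $\log v$ and its derivatives on the compact polytope $P_{\om_X}$.
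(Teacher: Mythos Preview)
Your sketch has a genuine gap at the handling of the term $-\CB\,\tr_{\om_\vph}\om_X$ from the weighted Aubin--Yau inequality. You write that you will ``produce a term controlling $\int u^{2p+2}\eta^{2q}\om_X^n$'' using $\tr_{\om_\vph}\om_X \leq C\,u^{n-1}e^{-F}$ together with a Sobolev inequality. But the Newton inequality goes the wrong way for this: it turns the bad term $\CB\int u^{2p+1}\eta^{2q}\tr_{\om_\vph}\om_X\,e^F\om_X^n$ into $C\int u^{2p+n}\eta^{2q}\om_X^n$, a \emph{higher} power of $u$ than the $\int u^{2p+2}$ you want on the left. Your Sobolev step converts the gradient energy $\int u^{2p-2}|\nabla u|_{\om_X}^2$ into roughly $\|u\|_{L^{2pn/(n-1)}}^{2p}$, and the resulting inequality $\|u\|_{L^{2pn/(n-1)}}^{2p} \lesssim \|u\|_{L^{2p+n}}^{2p+n}$ only gains when $p > n(n-1)/2$, so the iteration cannot be started from the topological $L^1$ bound. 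A related issue: you invoke ``the uniform comparability of the metrics on $\supp(\eta)$'' to absorb cross terms, but only the \emph{volume forms} $\om_X^n$ and $\om_\vph^n$ are comparable on $\supp(\eta)$ (via the bounds on $F$); comparability of the metrics $\om_X$ and $\om_\vph$ themselves is precisely the $C^2$-estimate you are trying to prove.

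The paper's remedy is to work not with $\tr_{\om_X}\om_\vph$ alone but with the weighted quantity $u = e^{-a(F+b\vph-b\rho)}\tr_{\om_X}\om_\vph$. Applying $\Dt_{\vph,v}$ to the exponent produces an extra term $+ab\,\tr_{\vph,v}\om_\rho$; since $\om_\rho = \om + dd^c\rho \geq C_\rho\,\om_X$, this supplies a \emph{positive} multiple of $\tr_{\om_\vph}\om_X$ which, for $b$ large, absorbs the bad $-\CB\,\tr_{\om_\vph}\om_X$ and leaves a net $+c\,\tr_{\om_\vph}\om_X$. Now Newton's inequality $\tr_{\om_X}\om_\vph\cdot\tr_{\om_\vph}\om_X \geq (\tr_{\om_X}\om_\vph)^{n/(n-1)}(v e^{-F})^{1/(n-1)}$ works in your favour, yielding after a global integration (no cutoff, no Sobolev) a direct inequality of the shape $\int u^{2p+1+1/(n-1)}\,d\mu \leq C\int u^{2p+1}\,d\mu + C\int u^{2p}\,d\mu$ against a measure $\mu$ that is globally bounded above and bounded below on $\CK$. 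H\"older then closes the estimate in one step. The exponential weight is doing the work your cutoff cannot: it flips the sign of the curvature term rather than merely localizing away from $\{\rho=-\infty\}$.
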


\begin{proof} 
We shall adapt the approach in the smooth setting of Chen and Cheng \cite{Chen_Cheng_2021_1} for cscK metrics and also its generalization by \cite{DiNezza_Jubert_Lahdili_2024} and \cite{Han_Liu_2024} for weighted cscK metrics. 
We highlight some differences in the following:
\begin{itemize}
    \item 
    We shall work with the trace taken with respect to the reference metric $\om_X$ instead of $\om$ as $\om$ is moving when we are going to apply the result. 
    However, the metric $\om$ still plays a role and must be carefully merged during the computations, especially since we only have the upper bound $\om \leq C_\om \om_X$.

    \item  
    In the last step, we need to use bounds on $\vph$ and $F$. 
    Special attention is required for $F$ since its lower bound is uniform only up to a term involving $K_2 \rho$, which is not bounded from below.
\end{itemize}

\smallskip
Take $u := e^{-a(F+ b\vph-b\rho) }\tr_{\omega_X}\omega_{\vph}\geq 0$, where $a,b$ are positive constants to be determined later. 

Recall that from \eqref{eq_w_laplacian}, 
\[
    \Delta_{\omega, v}f =   \Delta_{\omega} f+ \tr_{\om} (d\log v(m_\om)\wedge d^cf);
\]
hence, we have on $X\setminus \{\rho=-\infty\}$
\begin{equation}\label{eq:int_C2_1}
\begin{split}
    \Delta_{\vph, v} u
    &= \Delta_{\vph, v} e^{\log u} 
    = \frac{|d u|^2_{\omega_\vph}}{u} + u\Delta_{\om_\vph} \log u + \tr_{\om_\vph}(d\log v(m_{\om_\vph})\wedge d^c u)\\
    &=\frac{|d u|^2_{\omega_\vph}}{u} + u\Delta_{\om_\vph} \log u + u\tr_{\om_\vph}(d\log v(m_{\om_\vph})\wedge d^c \log u)
    =\frac{|d u|^2_{\omega_\vph}}{u} + u \Delta_{\vph, v}\log u \\
    &\geq u \Delta_{\vph, v}\log u = -au \Delta_{\vph, v} ( F+ b \vph-c\rho) + u \Delta_{\vph, v}\log (\tr_{\om_X}\om_\vph). 
\end{split}
\end{equation}
Using $\Ric(\omega_X)\leq A\omega_X$, $|\langle (\log v)'(m_\vph), m_{\Ric(\om_X)} \rangle| \leq C_{v,\Ric(\om_X)}$, $\tr_{\vph, v} \omega_\vph  \leq n+ C_v$, and $ \tr_{\omega_\vph}{\om_\rho} \geq C_\rho \tr_{\vph, v} \omega_X - C_v$, we derive 
\begin{equation}\label{eq:int_C2_2}
\begin{split}
    \Delta_{\vph, v} (F + b\vph -b\rho) &= -S + \tr_{\vph, v}(\Ric(\omega_X))  + b\tr_{\vph, v}( dd^c \vph  -dd^c\rho)\\
    &\leq -\min_X S + A \tr_{\om_\vph} \omega_X + \langle (\log v)'(m_\vph), m_{\Ric(\om_X)} \rangle - b \tr_{\vph, v}\omega_\rho + b\tr_{\vph, v} \omega_\vph \\ 
    &\leq  -\min_X S  + (A -bC_\rho )\tr_{\om_\vph} \omega_X + b(2 C_v + n) + C_{v, \Ric(\om_X)}\\
    &=C_1 +(A - bC_\rho)\tr_{\om_\vph} \omega_X  
\end{split} 
\end{equation}
where $C_1>0$ only depends on $-\min_X S, b, C_v, C_{v, \Ric(\om_X)}$.

\smallskip
By a weighted Aubin--Yau's inequality for weighted Monge--Amp\`ere equation obtained in \cite{DiNezza_Jubert_Lahdili_2024} (see also Lemma \ref{lem:weighted_AY} for the version we apply), we have 
\begin{equation}\label{eq:int_C2_3}
    \Delta_{\vph, v}\log \tr_{\om_X}\om_\vph \geq \frac{1}{\tr_{\om_X} \om_\vph} \Delta_{\omega_X} F - \CB\tr_{\om_\vph}\om_X -\CC 
\end{equation}
where $\CB, \CC$ depend on $\omega_X, C_v$. 
Combining \eqref{eq:int_C2_1}, \eqref{eq:int_C2_2}, and \eqref{eq:int_C2_3}, we get
\[
    \Delta_{\vph, v} u
    \geq e^{-a(F+ b \vph-b \rho)} \left\{ -(a C_1+\CC) \tr_{\om_X} \om_\vph +\Delta_{\om_X}F + (abC_\rho-Aa -\CB)\tr_{\om_X}\om_\vph \tr_{\om_\vph}\om_X \right\}.
\]
Using the fact that 
\[
    \tr_{\om_X} \om_\vph \tr_{\omega_\vph}\omega_X 
    \geq (\tr_{\om_X} \om_\vph)^{\frac{n}{n-1}}  \left(\frac{\om_X^n}{\om_\vph^n} \right)^{\frac{1}{n-1}} 
    = (\tr_{\om_X} \om_\vph)^{\frac{n}{n-1}}  \left(v e^{-F}\right)^{\frac{1}{n-1}} 
    \geq (\tr_{\om_X} \om_\vph)^{\frac{n}{n-1}}  \left(C_v e^F\right)^{\frac{-1}{n-1}},
\]
and choosing $b $ large enough such that $abC_\rho-Aa-\CB\geq abC_\rho/2$, we get on $X\setminus \{\rho=-\infty\}$
\begin{align}\label{eq_delta_u}
    \Delta_{\vph, v} u\geq 
    -(aC_1+ \CC)u+ \frac{abC_\rho}{2} C^{-\frac{1}{n-1}} e^{\frac{-F}{n-1}}(\tr_{\om_X}\om_\vph)^{\frac{1}{n-1}} u+ e^{-a(F+b\vph-b\rho)} \Delta_{\om_X}F.   
\end{align}
Since $|\nabla u|_{\om_\vph}^2 \tr_{\om_X}\om_\vph \geq |\nabla u|^2_{\om_X}$, by \eqref{eq_delta_u}, we infer that on $X\setminus \{\rho=-\infty\}$
\begin{align}\nonumber
    \frac{1}{2p+1}\Delta_{\vph, v}u^{2p+1}
    &= u^{2p}\Delta_{\vph, v} u + 2pu^{2p-1}|\nabla u|^2_{\om_\vph} \geq u^{2p}\Delta_{\vph, v} u + 2pu^{2p-2}e^{-a(F+b\vph-b\rho)}|\nabla u|^2_{\om_X}\\ \nonumber
    &\geq u^{2p}\left(-(aC_1+ \CC)u+ \frac{abC_\rho}{2} C^{-\frac{1}{n-1}} e^{\frac{-F}{n-1}}(\tr_{\om_X}\om_\vph)^{\frac{1}{n-1}} u+ e^{-a(F+b\vph-b\rho)} \Delta_{\om_X}F\right) \\
    &\quad  + 2pu^{2p-2}e^{-a(F+b\vph-b\rho)}|\nabla u|^2_{\om_X} \label{eq_delta_u_2p+1}
\end{align}
Remark that $u^\alpha, \, \alpha\geq 1$ and $e^{ab\rho}$ are bounded quasi-psh functions. For bounded quasi-psh function $\psi$, for any K\"ahler form $\eta$, the distributions $(\Delta_\eta  \psi)\eta^{n}= ndd^c \psi\wedge \eta^{n-1} $ and $|\nabla \psi|^2_\eta \eta^n= n d\psi\wedge d^c\psi\wedge \eta^{n-1}$ do not charge pluripolar sets (see e.g. \cite[Thm.~3.14 \& Def.~3.2]{GZbook} for local versions).
Therefore, the integration on both sides of \eqref{eq_delta_u_2p+1} on $X \setminus \{\rho=-\infty\}$ extends the integration on the whole $X$.
Using the weighted integration-by-parts formula \eqref{eq:weighted_IPP}, we obtain 
\begin{equation}\label{eq_main_ineq}
\begin{split}
    0&= \int_X \frac{1}{2p+1}\Delta_{\vph, v}u^{2p+1} v(m_\vph) \om_\vph ^n \\
    &\geq \int_X  2pu^{2p-2}e^{-a(F+b\vph-b\rho) +F}|\nabla u|^2_{\om_X} \om_X^n -(aC_1 + \CC) \int_X u^{2p+1}e^F \om_X^n \\
    &\quad \quad +\frac{abC_\rho}{2} C_v^{-\frac{1}{n-1}}\int_X u^{2p+1} e^{\frac{n-2}{n-1}F} (\tr_{\om_X}\om_\vph)^{\frac{1}{n-1}}\om_X^n +\underset{=: I}{\underbrace{\int_X u^{2p} e^{-a(F+b\vph-b\rho)+F} \Delta_{\om_X}F\om_X^n}}.
\end{split}    
\end{equation}

Now we estimate the integral $I$ in \eqref{eq_main_ineq}.  Since we do not need to use the fact that $\omega+dd^c \rho\geq C_\rho\omega_X$, but only $\rho$ is $\omega$-psh, we can approximate $\rho$ by a decreasing sequence $(\rho_j)$ of smooth $\omega$-psh functions (see \cite{Blocki_Kolodziej_2007}). Then, at the end, we use the monotone convergence theorem to obtain an estimate for $I$. Therefore, in the estimate for $I$ below, we shall work with $\rho_j$, which is smooth, but for simplicity we still denote it by $\rho$. 

Set $G= (1-a) F-ab(\vph-\rho)$ and consider $a>1$. 
The last term $I$ can be written as the following two parts
\begin{align*}
     I=  \int_X u^{2p}\Delta_{\om_X}Fe^G \om_X^n= \underset{=: I_1}{\underbrace{\frac{1}{1-a} \int_X u^{2p}\Delta_{\om_X}(G) e^G \om_X^n}}  + \underset{=: I_2}{\underbrace{\frac{ab}{1-a} \int_X u^{2p}\Delta_{\omega_X} (\vph-\rho)e^G \om_X^n}}.
 \end{align*}
For $I_1$, the weighted integration-by-parts formula \eqref{eq:weighted_IPP} implies
\begin{align*}
    I_1&=\frac{1}{1-a}\int_X u^{2p}\Delta_{\om_X}(G) e^G \omega_X^n \\
    &=\frac{1}{a-1}\int_Xu^{2p}| \nabla G|^2_{\om_X} e^G \om_X^n  + \frac{1}{a-1} \int_X n 2p u^{2p-1}e^{G}du\wedge d^c G\wedge \om_X^{n-1}.
\end{align*}
By Cauchy--Schwarz inequality, 
\[
    \lt|2p u^{2p-1}  \frac{du\wedge d^c G\wedge \om_X^{n-1}}{\om_X^n} \rt| 
    \leq \frac{4p^2}{2}u^{2p-2} |\nabla u|^2_{\om_X}  +\frac{1}{2} u^{2p}|\nabla G |^2_{\om_X}.
\]
The above two inequalities yield
\begin{align}\label{eq_I_1}
     I_1\geq - \frac{2p^2}{a-1} \int_X u^{2p-2} |\nabla u|^2_{\om_X} e^G \om_X^n. 
\end{align}
We next control the term $I_2$. 
We compute
\begin{align*}
     \Delta_{\omega_X, v} (\vph-\rho) 
     = \tr_{\om_X, v} ( \om_\vph-\om_\rho) 
     &=  \tr_{\om_X}( \om_\vph-\om_\rho) + \langle (\log v)'(m_{\om_X}), (m_\vph-m_\rho) \rangle\\
     & \leq  \tr_{\om_X}  \om_\vph +C_3
 \end{align*} 
where $C_3$ only depends on $P, C_v$. 
Since $a>1$, we get 
\begin{align}\label{eq_I_2}
     I_2\geq  \frac{ab}{1-a}\int_X u^{2p} \tr_{\om_X}\om_\varphi e^G \om_X^n  + \frac{ab C_3}{1-a}\int_X   u^{2p} e^G \om_X^n.
\end{align}
Therefore, \eqref{eq_I_2} and  \eqref{eq_I_1}  yields 
\begin{equation}
I\geq - \frac{2p^2}{a-1} \int_X u^{2p-2} |\nabla u|^2_{\om_X} e^G \om_X^n +   \frac{ab}{1-a}\int_X u^{2p} \tr_{\om_X}\om_\varphi e^G \om_X^n  + \frac{ab C_3}{1-a}\int_X   u^{2p} e^G \om_X^n.
\end{equation}
 
\smallskip
Combining this with and \eqref{eq_main_ineq}, one can derive
{\small 
\begin{align*}
    0 &\geq 2\lt(p - \frac{p^2}{a-1}\rt) \int_X  u^{2p-2}e^G |\nabla u|^2_{\om_X} \om_X^n -(aC_1+\CC)\int_X u^{2p+1}e^F \om_X^n \\ 
    &\qquad+\frac{abC_\rho}{2} C_v^{-\frac{1}{n-1}}\int_X u^{2p+1} e^{\frac{n-2}{n-1}F} (\tr_{\om_X}\om_\vph)^{\frac{1}{n-1}}\om_X^n  
    + \frac{ab}{1-a}\int_X u^{2p} \tr_{\om_X}\om_\varphi e^G \om_X^n  + \frac{ab C_3 }{1-a}\int_X   u^{2p} e^G \om_X^n.
\end{align*}
}%
Taking $a > 1$ sufficiently large so that $p > p^2/(a-1)$, and using the upper bound $F \leq C_0$, one gains $\frac{n-2}{n-1} F \geq F -C_0/(n-1)$ and thus,  
{\small
\begin{align*}
    0&\geq -C_4 \int_X (\tr_{\om_X} \om_\vph)^{2p+1} e^{(2p+1)(G-F)+F} \om_X^n  -C_5 \int_X u^{2p} e^G \om_X^n
    +C_6 \int_X (\tr_{\om_X} \om_\vph)^{2p+1+\frac{1}{n-1}} e^{ (2p+1)(G-F) + F}\om_X^n,
\end{align*}
}%
where $C_4, C_5, C_6$ only depend on $ a, b, p, C_0, C_\rho, C_v,n$ and $|\nabla (v(m_{\omega_X}))|^2_{\om_X}$.  
Set $\mu := e^{(2p+1)(G-F) + F} \om_X^n = e^{-[a(2p+1)-1] F -ab(2p+1) \vph + ab(2p+1) \rho)} \om_X^n$. 
Choose $b \gg 1$ such that $ab(2p+1) > K_2 (a(2p+1)-1)$.  
By $F \geq K_2 \rho - C_0$ in the assumption, 
\[
    C_7^{-1}e^{ab(2p+1)\rho}\leq   \frac{\mu}{\om_X^n}\leq C_7,
\]
and it implies that $C_8^{-1} \leq \int_X \mu \leq C_8$.
Therefore, we obtain 
\begin{align*}
    0&\geq -C_4 \int_X (\tr_{\om_X} \om_\vph)^{2p+1} d\mu  -C_5   \int_X (\tr_{\om_X} \om_\vph)^{2p} d\mu  
    +C_6 \int_X (\tr_{\om_X} \om_\vph)^{2p+1+\frac{1}{n-1}} d\mu, 
\end{align*}
and H\"older's inequality shows $\| \tr_{\om_X} \om_\vph\|_{L^{2p+1}(X,\mu)}\leq C$. 
This concludes the proof.
\end{proof}

\subsection{Local higher-order estimates}
We next provide higher-order estimates that are away from the degenerating locus. 
We shall use the following local estimate, which generalizes
\cite{Chen_Cheng_2021_1} for cscK equations (see also \cite{DiNezza_Jubert_Lahdili_2024, Han_Liu_2024}). 
Its proof follows a similar argument in \cite{Chen_Cheng_2021_1} with further analysis of the weighted terms. 
For full details, the reader is referred to Appendix~\ref{sect_local_chen_cheng}.

\begin{thm}\label{thm_local_higher_est} 
Assume that $v\in  \CC^\infty(\frak{t}^\vee, \BR^+)$ and $w\in \CC^\infty(\frak{t}^\vee, \BR)$.
Let $\phi$ be a smooth solution of 
\begin{equation*}
\begin{cases}
    v(m_{dd^c\phi})\det{(\phi_{k \bar j})} = e^G \\ 
    \Delta_{\phi,v} G = -w(m_{\ddc\phi})
\end{cases}
\end{equation*}
in $B_1(0)\subset \mathbb{C}^n$, where $\Delta_{\phi,v} f:= \phi^{k\bar j} \partial_{k}\partial _{\bar j } f+ \tr_{dd^c\phi}(d\log v(m_{dd^c\phi}) \wedge d^c f )$, such that $\Delta \phi\in L^{p}(B_1(0))$ and $\tr_\phi \om_{\mathrm{eucl}}\in L^p (B_1(0))$ for some $p>3n$. Then there exists a constant $A>0$, depending only on $C_v$, $C_w$, $p$,  $\|\Delta \phi\|_{L^p (B_1(0))}$, $\|\tr_\phi \om_{\mathrm{eucl}}\|_{L^p(B_1(0))}$, such that $|\phi_{k\bar j}| \leq A$,  $|\nabla G|\leq A$ in $B_{\frac{1}{2}} (0))$ and for any $k\geq 2$
\[
    \|D^k \phi\|_{L^\infty(B_{1/2} (0))}\leq C(k, A, \|\phi\|_{L^\infty (B_1(0))}).
\]  
\end{thm}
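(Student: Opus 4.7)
I would follow the strategy of Chen--Cheng \cite{Chen_Cheng_2021_1}, organized in three stages, keeping careful track of the extra first-order terms arising from $d\log v(m_{\ddc\phi})$ in the weighted Laplacian and using the concavity of $\log v$ exactly where it is needed.

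\emph{Stage 1: Moser iteration to obtain $|\phi_{k\bar j}|\leq A$.}
Consider the auxiliary quantity $u := e^{-aG+b\phi}\tr_{\omega_{\mathrm{eucl}}}\omega_\phi$ for constants $a,b>0$ to be fixed. A weighted Aubin--Yau-type inequality of the form
\[
    \Delta_{\phi,v} \log \tr_{\omega_{\mathrm{eucl}}}\omega_\phi \;\geq\; \frac{\Delta_{\omega_{\mathrm{eucl}},v} G}{\tr_{\omega_{\mathrm{eucl}}}\omega_\phi} - \mathcal{B}\,\tr_\phi\omega_{\mathrm{eucl}} - \mathcal{C},
\]
valid because $\log v$ is concave, combined with the two equations of the system and the uniform bound $|\langle(\log v)'(m_{\ddc\phi}), m_{\ddc\psi}\rangle|\leq C_v$, yields after absorption a differential inequality of the shape
\[
    \Delta_{\phi,v} u \;\geq\; -C_1 u - C_2 + C_3\, u^{1+\tfrac{1}{n-1}} e^{-\tfrac{G}{n-1}}
\]
on $B_1(0)$. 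Multiplying by $\eta^2 u^{q-1}$ for a cutoff $\eta$, integrating against the weighted volume form $v(m_{\ddc\phi})\det(\phi_{k\bar j})\,d\lambda=e^G d\lambda$, and using the weighted integration by parts \eqref{eq:weighted_IPP} together with the Euclidean Sobolev inequality, I would run the standard Moser iteration on shrinking balls $B_r\subset B_1(0)$. The hypothesis $p>3n$ provides the starting $L^p$-bound large enough for the iteration to close, producing $\|\Delta\phi\|_{L^\infty(B_{3/4})}+\|\tr_\phi\omega_{\mathrm{eucl}}\|_{L^\infty(B_{3/4})}\leq A_0$. Combined with the Monge--Amp\`ere equation $v(m_{\ddc\phi})\det(\phi_{k\bar j})=e^G$ this gives two-sided $L^\infty$-bounds on $G$ on $B_{3/4}(0)$ and uniform ellipticity of $\Delta_{\phi,v}$ there.

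\emph{Stage 2: gradient bound on $G$.} With the linearised operator uniformly elliptic and $G\in L^\infty(B_{3/4})$, I would apply a Bernstein-type maximum principle to an auxiliary function of the form $\eta^2|\nabla G|^2+A(|\nabla\phi|^2+\phi^2)$ on $B_{3/4}(0)$. Using $\Delta_{\phi,v} G=-w(m_{\ddc\phi})$ (so the right-hand side is bounded in terms of $C_w$) and absorbing the $d\log v$-type cross terms via concavity of $\log v$ and the already-established two-sided control on $\phi_{k\bar j}$, one obtains $|\nabla G|\leq A$ on $B_{2/3}(0)$.

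\emph{Stage 3: higher-order estimates.} On $B_{2/3}(0)$, the equation $v(m_{\ddc\phi})\det(\phi_{k\bar j})=e^G$ is now a uniformly elliptic complex Monge--Amp\`ere equation with Lipschitz right-hand side. Evans--Krylov yields a $C^{2,\alpha}$-bound on $B_{1/2}(0)$; a standard bootstrap applying Schauder estimates to the differentiated system then gives $\|D^k\phi\|_{L^\infty(B_{1/2})}\leq C(k,A,\|\phi\|_{L^\infty(B_1(0))})$ for every $k\geq 2$.

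The main obstacle is Stage 1. The weighted Laplacian $\Delta_{\phi,v}$ is non-self-adjoint, and its integration by parts against the weighted volume form produces additional first-order terms involving $d\log v(m_{\ddc\phi})$ that are only bounded, not small. The concavity of $\log v$ is precisely the structural feature that makes these error terms absorbable into the leading positive terms generated by the weighted Aubin--Yau inequality, which is why it is imposed as a hypothesis; without it the Moser iteration would not close. Once this stage is carried out, Stages 2 and 3 are essentially weighted variants of the local arguments in \cite{Chen_Cheng_2021_1}, and the detailed book-keeping is what is deferred to Appendix~\ref{sect_local_chen_cheng}.
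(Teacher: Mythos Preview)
Your proposed route is genuinely different from the paper's, and Stage~1 as written has a gap.

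The paper does \emph{not} separate the Laplacian bound and the gradient bound. Following Chen--Cheng's local argument, it introduces the single combined quantity
\[
    u \;=\; e^{G/2}\,|dG|_{\phi}^{2} \;+\; K\,\Delta\phi
\]
and shows directly (Lemma~\ref{lem:appxB}) that $\Delta_{\phi,v}u\geq -C\,(\Delta\phi)\,u$ on $B_{1}(0)$. The crucial mechanism is this: computing $\Delta_{\phi,v}(\Delta\phi)$ produces an uncontrolled $-\Delta_{\omega_{\mathrm{eucl}}}G$ term, while the Bochner computation for $e^{G/2}|dG|_{\phi}^{2}$ produces the Hessian term $|\nabla^{\phi}\overline{\nabla^{\phi}}G|_{\phi}^{2}$; Cauchy--Schwarz in the form $|\nabla^{\phi}\overline{\nabla^{\phi}}G|_{\phi}^{2}+\tfrac{K^{2}}{4}(\Delta\phi)^{2}\geq K|\Delta G|$ then absorbs the bad term. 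Since the coefficient $(\Delta\phi)$ lies in $L^{p}$ with $p>3n$, and the weighted Laplacian is in divergence form $\Delta_{\phi,v}f=\tfrac{1}{2v}\partial_{\alpha}(v\,g_{\phi}^{\alpha\beta}\partial_{\beta}f)$, a single application of Moser iteration (\cite[Lem.~6.3]{Chen_Cheng_2021_1}) yields $\|u\|_{L^{\infty}(B_{1/2})}$ in terms of $\|u\|_{L^{1}(B_{3/4})}$, the latter being bounded by an elementary integration-by-parts argument. This simultaneously gives $L^{\infty}$ control on $\Delta\phi$ and on $|dG|_{\phi}$.

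Your Stage~1 cannot reach the pointwise inequality you state. With $u=e^{-aG+b\phi}\tr_{\omega_{\mathrm{eucl}}}\omega_{\phi}$, the weighted Aubin--Yau inequality leaves the term $\dfrac{\Delta_{\omega_{\mathrm{eucl}},v}G}{\tr_{\omega_{\mathrm{eucl}}}\omega_{\phi}}$ on the right-hand side; here $\Delta_{\omega_{\mathrm{eucl}}}G$ is the \emph{Euclidean} Laplacian of $G$, not $\Delta_{\phi,v}G$, and it is not pointwise controlled by the data. In the paper's Proposition~\ref{prop_c^2} this term is only handled \emph{after} multiplying by $u^{2p}$ and integrating (the $I_{1},I_{2}$ computation), so no clean subsolution inequality of the form you write actually holds. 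One could in principle localize Proposition~\ref{prop_c^2} and iterate in $p$, but that is a substantially different and more delicate argument than the one you sketch, and the $p$-dependence of the constants must be tracked. The paper's combined quantity sidesteps this entirely.

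A final remark: the concavity of $\log v$ is not actually used in the paper's proof of this local result. In Appendix~\ref{sect_local_chen_cheng} all second derivatives of $\log v$ are simply bounded by $C_{v}$, and the extra weighted terms are absorbed as harmless errors of size $C(\Delta\phi)u$ or $C(\Delta\phi)^{2}$. Concavity enters only through Lemma~\ref{lem:weighted_AY} in the global integral estimate (Proposition~\ref{prop_c^2}), so your emphasis on it being indispensable for the local Moser iteration is misplaced.
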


Combining Theorem~\ref{thm_uniform_cscK}, Proposition~\ref{prop_c^2}, and Theorem~\ref{thm_local_higher_est}, we obtain the following result:
\begin{thm}\label{thm_est_cscK_general_2} 
Under the same assumption of Theorem \ref{thm_uniform_cscK} and assuming that $\log v$ is concave and $S= w(m_\phi), w\in \CC^\infty(\frak{t}^\vee, \BR)$, there is a uniform constant $C > 0$ depending only on $n, V, \alpha, K_0, K_1,K_2, K_3, K_4, C_v,C_w, \mu, \rho_1$ such that 
$$ 
    \|\varphi\|_{L^\infty} \leq C \, \text{ and } \, F \leq C \, \text{ and } F\geq K_2\rho_2-C, 
$$ 
where $\rho_2$ is a quasi-plurisubharmonic function with analytic singularities, such that $\om + \ddc \rho_2 $ is a K\"ahler current with $\Ric(\mu)\leq K_2(\om + \ddc \rho_2)$.

Moreover,  for any compact set $K \subset X \setminus \{\rho_2=-\infty\}$ and $l \in \BN$, we have $\|\vph_\vep\|_{\CC^l(K, \omega_X)} \leq C_{K,l}$ for some uniform constant $C_{K,l} > 0$, where $\om_X$ is a fixed K\"ahler metric on $X$.
\end{thm}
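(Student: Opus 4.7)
The plan is to chain the three earlier results together in a straightforward fashion: Theorem \ref{thm_uniform_cscK} supplies the zeroth-order bounds on $\varphi$ and $F$; Proposition \ref{prop_c^2} upgrades these to an interior $L^p$ estimate on the Laplacian away from $\{\rho_2=-\infty\}$; and Theorem \ref{thm_local_higher_est} bootstraps from $L^p$ to all higher orders.

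The first assertion is immediate. Since the hypotheses of Theorem \ref{thm_uniform_cscK} (the bound on $\mathrm{Ric}(\mu)$ in terms of $\rho_1$, the entropy bound, the Skoda-type bound, and the auxiliary bound $\mathrm{Ric}(\mu)\leq K_2(\om+\ddc\rho_2)$) are exactly those assumed here, I would invoke it verbatim to conclude $\|\varphi\|_{L^\infty}\leq C$, $F\leq C$, and $F\geq K_2\rho_2-C$. The concavity of $\log v$ plays no role at this step.

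For the local higher-order part, fix a compact $K\subset X\setminus\{\rho_2=-\infty\}$ and choose a slightly larger compact $K'\supset K$ still contained in the smooth locus of $\rho_2$. On $K'$ the function $\rho_2$ is smooth and bounded, and because $\om+\ddc\rho_2$ is a K\"ahler current it dominates $C_\rho\,\om_X$ on $K'$ for some $C_\rho(K')>0$; the remaining geometric hypotheses of Proposition \ref{prop_c^2} ($\om\leq C_\om\om_X$, an upper bound on $\mathrm{Ric}(\om_X)$, and a lower bisectional curvature bound) are automatic for a fixed smooth K\"ahler metric $\om_X$ on the compact manifold $X$. Applying Proposition \ref{prop_c^2} on $K'$ with $p$ large enough that $2p+2>3n$ yields
\[
\|\tr_{\om_X}\om_\varphi\|_{L^{2p+2}(K',\,\om_X^n)}\leq C_1,
\]
and the Monge--Amp\`ere relation $v(m_\varphi)\om_\varphi^n=e^F\om_X^n$ combined with the upper bound on $F$ then gives a comparable $L^p$ control of $\tr_{\om_\varphi}\om_X$ on $K'$.

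Finally, I would cover $K$ by finitely many holomorphic coordinate balls $B\subset K'$ and apply Theorem \ref{thm_local_higher_est} inside each. The inputs required there ($L^p$ bounds on $\Delta\varphi$ and on $\tr_\varphi\om_{\mathrm{eucl}}$ for some $p>3n$) are furnished by the preceding step, and the output gives the desired $\mathcal{C}^l$ bound on the concentric half-ball; patching over the finite cover yields the constant $C_{K,l}$.

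The substantive obstacles were already absorbed into the earlier results: the $L^\infty$ part used the auxiliary Monge--Amp\`ere trick of Guo--Phong in Theorem \ref{thm_uniform_cscK}, while the passage to higher order used the concavity of $\log v$ critically through the weighted Aubin--Yau inequality (Lemma \ref{lem:weighted_AY}) inside Proposition \ref{prop_c^2} and Theorem \ref{thm_local_higher_est}. The only care needed at the present assembly step is the choice of $K'$, so that $\rho_2$ is genuinely smooth there and $\om+\ddc\rho_2$ strictly dominates $\om_X$ — both standard consequences of $\rho_2$ having analytic singularities and $\om+\ddc\rho_2$ being a K\"ahler current.
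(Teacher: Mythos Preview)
Your proposal is correct and matches the paper's approach exactly: the paper simply states that the result follows by combining Theorem~\ref{thm_uniform_cscK}, Proposition~\ref{prop_c^2}, and Theorem~\ref{thm_local_higher_est}, and you have filled in the natural details. Two minor points: the $L^p$ control of $\tr_{\om_\varphi}\om_X$ uses the \emph{lower} bound on $F$ (to bound $e^{-F}$ above on $K'$), not the upper bound; and the hypothesis $\om+\ddc\rho_2\geq C_\rho\om_X$ in Proposition~\ref{prop_c^2} actually holds \emph{globally} because $\om+\ddc\rho_2$ is a K\"ahler current, so there is no need to localize to $K'$ before applying the proposition --- the proposition already delivers the estimate on any compact subset of $X\setminus\{\rho_2=-\infty\}$.
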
 

\section{Existence of singular weighted cscK metrics} \label{sect_existence}
We shall combine the a priori estimates obtained in the previous section and the uniform coercivity established in \cite{Boucksom_Jonsson_Trusiani_2024} to construct singular weighted cscK metrics. 

\subsection{Setup}\label{sect_setting}
In the sequel, we always assume the following setting:

\begin{sett}\label{sett:set_sec3}
Let $X$ be an $n$-dimensional compact K\"ahler variety with log terminal singularities.
Fix a compact torus $T\subset \Aut_{\red}(X)$ and denote by $\mathfrak{t} = \Lie(T)$. 
Assume that $X$ admits a $T$-equivariant resolution of singularities $\pi: Y \to X$ with $K_Y= \pi^* K_X+ \sum_{i} a_i E_i$, $a_i>-1$, $Y$ is K\"ahler, and $\pi$ is an isomorphism over $X^{\reg}$.  Let $\om_Y$ be a $T$-invariant K\"ahler metric on $Y$.
Given a $T$-invariant K\"ahler metric $\om$ on $X$, by \cite[Thm.~3.17]{Boucksom_2004}, there exists a $T$-invariant quasi-psh function $\rho$ that is smooth on $Y \setminus E$ and has analytic singularities along $E$ such that $\pi^\ast \om + \ddc \rho$ is a K\"ahler current,  i.e. $\pi^\ast \om + \ddc \rho \geq C_\rho\omega_Y$ for some $C_\rho>0$.  Fix a $T$-invariant adapted measure $\mu_X$ on $X$, which is normalized by  $\int_ Y \log\left( \frac{\pi^* \mu_X}{\omega_Y^n}\right) \pi^* \omega^n=0$.
\end{sett}

For $\vep \in (0,1]$, we denote by $\omega_\vep := \pi^\ast \omega + \vep \omega_Y$ and $P_\vep := \im(m_{\omega_\vep}) = \im(m_{\pi^*\om}) + \vep \im(m_{\om_Y})$ which contained in a compact set of $\frak{t}^\vee$ for all $\vep$ small. 
Let $v,w\in \CC^\infty(\frak{t}^\vee, \mathbb{R})$ with  $v,w > 0$  on $P_\om$. 
Since $P_\om$ is compact, there are constants $C_v, C_w \geq 1$ such that for any $\vep$ sufficiently small, on $P_\vep$, 
\[
    C_v^{-1} \leq v + \sum_\af |v_\af| + \sum_{\af,\bt} |v_{\af\bt}| \leq C_v,
    \quad\text{and}\quad
    C_w^{-1} \leq |w| + \sum_\af |w_\af| + \sum_{\af,\bt} |w_{\af\bt}| \leq C_w.
\]
Denote by $\ell^{\ext}_\vep$ (reps. $\ell^{\ext}$) the extremal affine function on $\frak{t}^{\vee}$ associated to $\om_\vep$ (resp. $\om$). 
Since $\omega_\vep$ converges smoothly to $\pi^\ast \om$, the moment map $m_{\omega_\vep}$ converges smoothly to $m_{\pi^\ast \om}$ and $\ell^{\ext}_\vep\rightarrow \ell^{\ext}$ in $\frak{t} \oplus \BR$ (cf. \cite[Lem.~4.18]{Boucksom_Jonsson_Trusiani_2024}). 

\smallskip
Assume that a smooth pair $(\vph_\vep, F_\vep)$ solves
\begin{equation}\label{eq:wcscK_epsilon}
\begin{cases}
    v(m_{\vph_\vep})(\omega_\vep+dd^c \vph_\vep)^n = e^{ F_\vep} \omega_Y^n, \quad \sup_Y \vph_\vep=0, \\ 
    \Delta_{\vph_\vep, v} F_\vep = - (w\ell^{\ext}_\vep)(m_{\vph_\vep}) + \tr_{\vph_\vep, v} (\Ric(\om_Y)).
\end{cases}
\end{equation} 
Under Setting~\ref{sett:set_sec3}, one can conclude the following Theorem ~\ref{thm_weak_solution_cscK} by applying Theorem \ref{thm_est_cscK_general_2}. 

\begin{thm}\label{thm_weak_solution_cscK}
Under Setting~\ref{sett:set_sec3}, suppose that Condition~\ref{cond_A} holds and $\log v$ is concave. 
Let $(\vph_\vep, F_\vep)$ be a solution to \eqref{eq:wcscK_epsilon}. 
If there is a constant $C > 0$ independent of sufficient small $\vep > 0$ such that
\[
    \H_\vep(\vph_\vep) 
    \leq \int_Y \log \lt(\frac{\om_{\vep,\vph_\vep}^n}{\om_Y^n}\rt) \om_{\vep,\vph_\vep}^n \leq C, 
\]
then there exists a uniform constant $C_0 > 0$ such that 
\[
    \|\vph_\vep\|_{L^\infty} \leq C_0,
    \quad C_0 \geq F_\vep \geq K_2 \rho - C_0, 
\]
 where $K_2$ is a positive constant such that
$\Ric(\om_Y) \leq K_2(\pi^\ast \om + \ddc \rho)$.
Moreover, for any compact subset $K \subset Y \setminus \Exc(\pi)$ and $l \in \BN$, we have $\|\vph_\vep\|_{\CC^l(K,\om_Y)} \leq C_{K,l}$ for some uniform constant $C_{K,l} > 0$.
\end{thm}

\subsection{Proof of Theorem~\ref{bigthm_existence_0}}
We now prove the following result on the existence of singular weighted cscK metrics.

\begin{thm}\label{thm_existence}
Under Setting~\ref{sett:set_sec3}, suppose that Condition~\ref{cond_A} is fulfilled and $\log v$ is concave. 
If the weighted Mabuchi functional $\M_{\omega,v,w\ell^{\ext}}$ is $T_\BC$-coercive, then $X$ admits a singular $(v,w)$-extremal metric (i.e. $(v,w\ell^{\ext})$-cscK metric) in $\{\om\}$, and it is also a minimizer of $\M_{\omega,v,w\ell^{\ext}}$.
\end{thm}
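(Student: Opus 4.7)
The plan is to realize the singular $(v,w\ell^{\ext})$-cscK metric as a smooth limit on the $T$-equivariant resolution $\pi\colon Y\to X$, and then to verify it minimizes $\M_{\om,v,w\ell^{\ext}}$. First I would observe that for each $\vep\in(0,1]$ the K\"ahler pair $(Y,\om_\vep)$ is smooth, and $\ell^{\ext}_\vep\to\ell^{\ext}$ in $\frak{t}\oplus\BR$ because $m_{\om_\vep}\to m_{\pi^\ast\om}$ smoothly. The uniform openness of coercivity proved in \cite[Thm.~A]{Boucksom_Jonsson_Trusiani_2024}---this is where Condition~\ref{cond_A} is crucial---upgrades the $T_\BC$-coercivity of $\M_{\om,v,w\ell^{\ext}}$ to a uniform $T_\BC$-coercivity of $\M_{\om_\vep,v,w\ell^{\ext}_\vep}$ for $\vep$ small. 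The weighted version of the Chen--Cheng theorem (\cite{Chen_Cheng_2021_2} and its weighted extensions \cite{Han_Liu_2024, DiNezza_Jubert_Lahdili_2024}) then produces smooth $T$-invariant solutions $(\vph_\vep,F_\vep)$ of the system \eqref{eq:wcscK_epsilon}.

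Next, the uniform coercivity together with the minimizing property of $\om_{\vep,\vph_\vep}$ (Lemma~\ref{lem_minimizer}) yields a uniform entropy bound $\H_\vep(\vph_\vep)\leq C$. This is exactly the hypothesis of Theorem~\ref{thm_est_cscK_general_2} not already encoded in Setting~\ref{sett:set_sec3} or Condition~\ref{cond_A}: the Ricci lower bound and Skoda-type integrability come from Condition~\ref{cond_A}, and the Ricci upper bound is provided by the quasi-psh function $\rho$ from Setting~\ref{sett:set_sec3}. Applying Theorem~\ref{thm_est_cscK_general_2}, where the concavity of $\log v$ is used for the Laplacian and higher-order estimates, yields $\|\vph_\vep\|_{L^\infty}\leq C_0$, $F_\vep\leq C_0$, $F_\vep\geq K_2\rho-C_0$, together with locally uniform $\CC^k$ bounds on $\vph_\vep$ over every compact subset of $Y\setminus E$.

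A diagonal extraction then produces a subsequence $\vph_\vep\to\vph_0$ in $\CC^\infty_\loc(Y\setminus E)$ and in $L^1(Y)$, with $\vph_0\in\PSH(Y,\pi^\ast\om)\cap L^\infty(Y)$. Since $\pi^\ast\om$ is pulled back from $X$, the limit descends to $\psi_0\in\PSH(X,\om)\cap L^\infty(X)\cap\CC^\infty(X^\reg)$. Passing to the limit in the second line of \eqref{eq:wcscK_epsilon} on compacts of $Y\setminus E\simeq X^\reg$, together with $\ell^{\ext}_\vep\to\ell^{\ext}$ and smooth convergence of the moment maps, shows $S_v(\om+\ddc\psi_0)=w\ell^{\ext}(m_{\psi_0})$ on $X^\reg$; hence $\om+\ddc\psi_0$ is a singular $(v,w\ell^{\ext})$-cscK metric on $X$.

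It remains to show that $\psi_0$ minimizes $\M_{\om,v,w\ell^{\ext}}$. For any competitor $\psi\in\CE^{1,T}_\om$, I would lift to $\pi^\ast\psi\in\CE^{1,T}_{\pi^\ast\om}$, use the chain $\M_{\om_\vep,v,w\ell^{\ext}_\vep}(\vph_\vep)\leq \M_{\om_\vep,v,w\ell^{\ext}_\vep}(\pi^\ast\psi)$ from Lemma~\ref{lem_minimizer}, and then pass to the limit using lower semicontinuity of the weighted entropy and continuity of the twisted weighted energies as $\om_\vep\to\pi^\ast\om$, together with the identification $\M^{\rm rel}_{\pi^\ast\om}(\pi^\ast\psi)=\M_{\om,v,w\ell^{\ext}}(\psi)$ afforded by Condition~\ref{cond_A} via \cite[Lem.~4.21]{Boucksom_Jonsson_Trusiani_2024}. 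I expect this final step to be the main technical obstacle: one has to match the downstairs and upstairs functionals carefully, switching between an adapted measure on $X$ and $\om_Y^n$ on $Y$, and to establish lower semicontinuity of $\M^{\rm rel}_{\pi^\ast\om}$ along the mixed $\CC^\infty_\loc(Y\setminus E)$ and $L^1$ convergence supplied by the a priori estimates. The variational framework developed in \cite[Sec.~4]{Boucksom_Jonsson_Trusiani_2024} is tailored precisely to handle this matching.
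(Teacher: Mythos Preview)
Your existence argument (Part~1) matches the paper's proof: uniform coercivity from \cite[Thm.~A]{Boucksom_Jonsson_Trusiani_2024}, smooth solutions from \cite{DiNezza_Jubert_Lahdili_2024_b, Han_Liu_2024}, a uniform entropy bound (the paper spells this out via \eqref{eq:BJT_Mab_ineq} and the bound $-\E_\vep(\vph_\vep)\leq D$), then Theorem~\ref{thm_est_cscK_general_2} and Arzel\`a--Ascoli.

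The minimizer argument has a genuine gap. You propose to compare $\M_{\om_\vep}(\vph_\vep)\leq\M_{\om_\vep}(\pi^\ast\psi)$ and pass to the limit, invoking ``lower semicontinuity of the weighted entropy'' on the left and ``continuity of the twisted weighted energies'' on the right. The left-hand side is fine: the paper establishes $\liminf_\vep\M_\vep^{\rm rel}(\vph_\vep)\geq\M_X^{\rm rel}(\psi_0)$ exactly along these lines. But on the right you need the opposite inequality, $\limsup_\vep\M_{\om_\vep}(\pi^\ast\psi)\leq\M_X(\psi)$, and entropy is only \emph{lower} semicontinuous under weak convergence of $\MA_{\om_\vep,v}(\pi^\ast\psi)$---the wrong direction. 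For a general $\psi\in\CE^{1,T}_\om$ there is no reason for $\H_{\om_Y^n,\om_\vep}(\pi^\ast\psi)$ to converge, or even to stay bounded, as $\vep\to 0$.

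The paper repairs this with a two-layer approximation you do not mention. First, approximate $u$ by $u_j\in\CC^0(X)\cap\CC^\infty(X^\reg)$ solving $(\om+\ddc u_j)^n=c_j f^{T,j}\om^n$ with smooth $T$-invariant densities $f^{T,j}\to\om_u^n/\om^n$ in $L^\chi$, so that $\M_X^{\rm rel}(u_j)\to\M_X^{\rm rel}(u)$. Second, for each fixed $j$ construct $u_{j,\vep}\in\PSH(Y,\om_\vep)$ by solving $(\om_\vep+\ddc u_{j,\vep})^n=e^{u_{j,\vep}-\pi^\ast u_j}\pi^\ast(c_j f^{T,j}\om^n)$; these are uniformly bounded, monotone in $\vep$, and converge locally smoothly on $Y\setminus E$ and uniformly on $Y$ (by Dini). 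This strong convergence is what forces genuine convergence $\H_{\vep,\om_Y^n}(u_{j,\vep})\to\H_{\om_Y^n}(\pi^\ast u_j)$, after which Lemma~\ref{lem_convergence_Mabuchi_smt} gives $\M_\vep^{\rm rel}(u_{j,\vep})\to\M_X^{\rm rel}(u_j)$. Only then can one chain $\M_X^{\rm rel}(\psi_0)\leq\liminf_\vep\M_\vep^{\rm rel}(\vph_\vep)\leq\liminf_\vep\M_\vep^{\rm rel}(u_{j,\vep})=\M_X^{\rm rel}(u_j)$ and let $j\to\infty$. Your citation of \cite[Sec.~4]{Boucksom_Jonsson_Trusiani_2024} is apt for the identification of functionals, but it does not by itself supply this approximation step.
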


\begin{proof}
\noindent{\bf Part 1: existence.}
Since $\M^{\rm rel}_X:=\M_{\om, v, w l^{\ext}}$ is  $T_\BC$-coercive, there are positive constants $A_0$ and $B_0$ such that
\[
    \M^{\rm rel}_X \geq A_0 d_{1,T}  - B_0
    \quad \text{on } \CE^1_{\nmlz}(X,\om)^T.
\]
By \cite[Thm.~A]{Boucksom_Jonsson_Trusiani_2024}, for any $A \in (0,A_0)$, there exists $B > 0$ such that for all sufficiently small $\vep > 0$, 
\begin{equation}\label{eq_coer_uniform_proof}
    \M_{\vep}^{\rm rel} \geq A d_{1,\om_\vep,T} - B
    \quad\text{on}\ \, \CE^1_\nmlz(Y,\om_\vep)^T,
\end{equation}
where $d_{1,\om_\vep,T}$  is  $d_{1,T}$ with respect to $\om_\vep$ in \eqref{eq_def_d_1} and
\[
        \M_{\vep}^{\rm rel} 
    := \M_{\om_\vep,v,w \ell^{\ext}_\vep}= \H_{\vep, v, \om_Y^n} +\E_{\vep, v}^{-\Ric(\om_Y) }  + \E_{\vep,vw \ell^{\ext}_\vep}
\]
is the weighted Mabuchi functional on $(Y, \om_\vep)$.
By the existence result obtained in \cite{DiNezza_Jubert_Lahdili_2024_b, Han_Liu_2024}, there is a smooth pair $(\vph_\vep, F_\vep)$ solving
\begin{equation}\label{eq_proof}
\begin{cases} 
    v(m_{\vph_\vep})(\omega_\vep+dd^c \vph_\vep)^n = e^{ F_\vep} \omega_Y^n, \quad \E_{\omega_\vep} (\vph_\vep)=0 \\ 
    \Dt_{\vph_\vep, v} F_\vep = - (w l^{\ext}_\vep)(m_{\vph_\vep})+ \tr_{\vph_\vep, v} (\Ric(\om_Y))
\end{cases}
\end{equation} 
Namely, $\om_{\vep, \vph_\vep}$ is a $(v, w l^{\ext}_\vep)$-cscK metric and $\vph_\vep$ minimizes $\M_{\vep}^{\rm rel}$. 

\smallskip
From Lemma \ref{lem_minimizer}, we know that $\vph_\vep $ minimizes the relative Mabuchi functional $\M_{\vep}^{\rm rel} $, so $\M_{\vep}^{\rm rel} (\vph_\vep) \leq \M_{\vep}^{\rm rel} (0)\leq C_1$, which gives 
\[
    d_{1,\om_\vep,T}(\vph_\vep,0) \leq \frac{C_1+B}{A}
\] 
using the coercivity condition. 
By the definition of $d_{1,\om_\vep,T}$, for each $\vep$ there exists $\sigma_\vep \in T_\BC$ such that $d_{1,\om_\vep}(\sigma_\vep \cdot \vph_\vep, 0) \leq \frac{C_1+B}{A}$. 
Set $\tilde \vph_\vep := \sigma_\vep \cdot \vph_\vep$. 
By the $T_\BC$-action, we have 
\[
    \om_{\vep, \tilde \vph_\vep} = \sigma_\vep^* \omega_{\vep, \vph_\vep},\quad
    m_{\tilde \vph_\vep} =  \sigma_\vep^* m_{\vph_\vep}
\] 
(see e.g. \cite[Lem.~3.19]{Boucksom_Jonsson_Trusiani_2024}). 
Pulling back the first equation in \eqref{eq_proof} by $\sigma_\vep$, we get 
\[
    v(m_{\tilde \vph_\vep})(\omega_\vep+dd^c \tilde \vph_\vep)^n 
    = \sm_\vep^\ast \lt(v(m_{\vph_\vep}) (\om_\vep + \ddc \vph_\vep)^n\rt)
    = e^{\sigma^*_\vep F_\vep} (\sigma^*\omega_Y)^n 
    = e^{\tilde F_\vep}\omega_Y^n
\] 
where $\tilde F_\vep= \sigma_\vep^*F_\vep +\log\frac{(\sigma_\vep^* \omega_Y)^n}{\om_Y^n}$. 
For any $T$-invariant K\"ahler metric $\omega$ on $Y$ and $\sm \in T_\BC$, we have  
\[
    \sm^\ast \lt(\tr_{\om,v} \bullet\rt) = \tr_{\sm^\ast\om,v} (\sm^\ast \bullet)
    \quad\text{and}\quad
    \sm^\ast \lt(\Dt_{\om,v} \bullet\rt) = \Dt_{\sm^\ast\om,v} (\sm^\ast \bullet).
\]
Combining this with the second equation in \eqref{eq_proof} yields
\begin{align*}
    \Dt_{\tilde \vph_\vep, v} \tilde F_\vep 
    &= \Dt_{\sm_\vep^\ast \om_{\vep, \vph_\vep},v} \sm_\vep^\ast F_\vep + \Dt_{\tilde \vph_\vep, v}  \log\frac{(\sigma_\vep^* \omega_Y)^n}{\om_Y^n}
    = \sigma_\vep^*\Dt_{ \vph_\vep, v} F_\vep +  \Dt_{\tilde \vph_\vep, v}  \log\frac{(\sigma_\vep^* \omega_Y)^n}{\om_Y^n}\\
    &= - (w l^{\ext}_\vep)(m_{ \tilde \vph_\vep})+ \tr_{\tilde \vph_\vep, v} (\Ric( \sigma_\vep^* \om_Y)) + \tr_{\tilde \vph_\vep, v} (\Ric( \om_Y) -\Ric(\sigma_\vep^*\omega_Y)) \\
    &=   - (w l^{\ext}_\vep)(m_{ \tilde \vph_\vep})+ \tr_{\tilde \vph_\vep, v} (\Ric( \om_Y)). 
\end{align*}
Therefore, $(\sigma_\vep \cdot \vph_\vep, \tilde F_\vep)$ also solves \eqref{eq_proof}. 
Replacing $\vph_\vep$ by $\sigma_\vep \cdot \vph_\vep$ and $F_\vep$ by $\tilde{F}_\vep$, we thus obtain 
\begin{equation}\label{eq_d1_control}
    d_{1,\om_\vep}(\vph_\vep, 0) \leq \frac{C_1+B}{A}. 
\end{equation}
Since $\M_\vep^{\rm rel}$ is $T_\BC$-invariant, this still minimize $\M_\vep^{\rm rel}$. 

\smallskip
Recall that $\pi^\ast \om + \ddc \rho \geq C_\rho \om_Y$.
We shall verify the assumptions in Theorem~\ref{thm_uniform_cscK} with $\rho_2 = \rho$ and $\mu = \om_Y^n$. 
It is obvious that the condition \ref{cond_skoda} holds. 
We have $\Ric(\om_Y) \leq C \om_Y$ for some constant $C > 0$ and $C_\rho \om_Y \leq \pi^\ast\om +\ddc \rho \leq \om_\vep +\ddc \rho$; hence, $\Ric(\om_Y) \leq K_2 (\om_\vep + \ddc \rho)$ for some constant $K_2 > 0$.
We now show that the uniform control on the entropies $\H_\vep(\vph_\vep)$, which will confirm the condition \ref{cond_ent}. 

\smallskip
If a closed $(1,1)$-current $\Ta \geq - A \pi^\ast \om$ for some constant $A > 0$, then
\begin{equation}\label{eq:twisted_energy_bound}
    \E_{\vep, v}^{\Ta}(\psi) \leq C(A,v,\Ta) (d_{1,\vep}(\psi, 0) + 1).
\end{equation}
for any $\vep \in (0,1]$ and for all $\psi \in \CE^1(Y,\om_\vep)$ (c.f \cite[Lem.~4.22]{Boucksom_Jonsson_Trusiani_2024}).  
By Condition~\ref{cond_A} and the strong openness \cite{Berndtsson_2013_openness_conjecture, Guan_Zhou_2015_strong_openness}, there is a constant $a > 1$ such that $\int_Y e^{-a K_1 \rho_1}\om_Y^n < +\infty$. 
Set $\mu_Y := e^{-aK_1\rho_1}\om_Y^n$.
Similar to \cite[Sec.~4.4]{Boucksom_Jonsson_Trusiani_2024}, we consider 
\[
    \widehat{\H}_{\vep,v}(\psi) 
    := \H_{\mu_Y}(\MA_{\vep,v}(\psi))
    \quad\text{and}\quad
    \widehat{\Ric} := \Ric(\om_Y) + K_1 \ddc \rho_1,
\]
 then $\widehat{\Ric} \geq - K_1 \pi^\ast \om \geq - K_1 \om_\vep$ by assumption.  
It follows from \cite[Lem.~A.2]{Boucksom_Jonsson_Trusiani_2024} and \cite[Prop.~3.44]{Boucksom_Jonsson_Trusiani_2024} that 
\begin{equation}\label{eq_H}
   \frac{1}{a}\widehat{\H}_{\vep, v} (\psi)=  \frac{1}{a}\H_{\vep, v} (\psi) + \int_Y K_1\rho_1 \MA_{\vep, v} (\psi)
\end{equation}
and 
\begin{equation}\label{eq_E}
    \E^{-\widehat{\Ric}}_{\vep , v} (\psi)= \E^{-{\Ric(\om_Y)}}_{\vep , v}(\psi) -\int_Y K_1\rho_1  \MA_{\vep, v} (\psi)+ \int_Y K_1\rho_1 \MA_{\vep, v}(0).
\end{equation} 
Therefore, we get
\begin{equation}\label{eq:BJT_Mab_ineq}
    \frac{1}{a} \H_{\vep,v,\om_Y^n}(\psi) + \E_{\vep,v}^{-\Ric(\om_Y)}(\psi) 
   =  \frac{1}{a} \widehat{\H}_{\vep,v}(\psi) + \E_{\vep,v}^{-\widehat{\Ric}}(\psi) - \int_Y K_1\rho_1 \MA_{\vep,v}(0),
\end{equation} 
with $ \int_Y K_1\rho_1 \MA_{\vep,v}(0)\rightarrow   \int_Y K_1\rho_1 v(m_{\pi^*\om} ) (\pi^* \omega)^n$ as $\vep \to 0$. 
Here without loss of generality, we assume that  $\int_Y\rho_1 v(m_{\pi^*\om} ) (\pi^* \omega)^n=0$. 
 Then one can derive
\begin{equation}\label{eq:Mab_ineq}
\begin{split}
        \M_{\vep}^{\rm rel} (\psi) 
    &= \H_{\vep,v,\om_Y^n}(\psi) + \E_{\vep,v}^{-\Ric(\om_Y)}(\psi) + \E_{\vep, v w l^{\ext}}(\psi) \\
    &\geq \lt(1-\frac{1}{a}\rt) \H_{\vep,v,\om_Y^n}(\psi) + \E_{\vep,v}^{-\widehat{\Ric}}(\psi) + \E_{\vep, v w l^{\ext}}(\psi) -C.
\end{split}    
\end{equation}
 By  \cite[Lem. 4.20]{Boucksom_Jonsson_Trusiani_2024},  $|\E_{\vep} (\psi)| \leq  C_2 d_{1, \om_\vep}(\psi, 0)$ for all $\psi \in \CE^1(Y,\om_\vep)$, hence
\eqref{eq_d1_control}  implies that
\[
    |\E_{\vep,vw \ell^{\ext}} (\vph_\vep)| \leq C_3= C(A,B, C_1,C_2, C_v, C_w), 
\]
for all $\vep$ small, and then \eqref{eq:twisted_energy_bound} and \eqref{eq:Mab_ineq} imply that
\[
    \lt(1-\frac{1}{a}\rt) \H_{\vep, v, \omega_Y^n}(\vph_\vep) 
    \leq C - \E_{\vep,vw \ell^{\ext}}(\vph_\vep) + \E^{\widehat{\Ric}}_{\vep,v}(\vph_\vep) 
    \leq C_1 + C_3 + C(K_1,v,\widehat{\Ric})C_4.
\]
Therefore, $\H_{\vep, v, \omega_Y^n}(\vph_\vep) \leq C'$ for all sufficiently small $\vep > 0$ and for some uniform constant $C' > 0$. 
We have
\begin{align*}
    &|\H_{\vep,v, \omega_Y^n}(\vph_\vep) - \H_{\vep, \omega_Y^n}(\vph_\vep)| 
    = \lt|\int_Y \log \lt(\frac{v(m_{\vph_\vep}) \om_{\vep,\vph_\vep}^n}{\om_Y^n}\rt) v(m_{\vph_\vep})\om_{\vep,\vph_\vep}^n 
    - \int_Y \log \lt(\frac{\om_{\vep,\vph_\vep}^n}{\om_Y^n}\rt) \om_{\vep,\vph_\vep}^n\rt| \\
    &= \lt|\int_Y \lt(1-\frac{1}{v(m_{\vph_\vep})}\rt) \log \lt(\frac{v(m_{\vph_\vep}) \om_{\vep,\vph_\vep}^n}{\om_Y^n}\rt) v(m_{\vph_\vep})\om_{\vep,\vph_\vep}^n 
    + \int_Y \log(v(m_{\vph_\vep})) \om_{\vep,\vph_\vep}^n\rt|
    \leq C_v' C' + C_v', 
\end{align*}
where $C'_v > 0$ depend only on $C_v$; hence, we obtain a uniform upper bound for $\H_{\vep, \omega_Y^n}(\vph_\vep)$ as required.

\smallskip
By Theorem~\ref{thm_est_cscK_general_2}, we obtained uniform $L^\infty$ and local $\CC^l$-estimates for $\vph_\vep -\sup_Y  \vph_\vep$. 
The Arzel\`a--Ascoli theorem shows that there is a subsequence $\vph_\vep -\sup_Y  \vph_\vep$ converging in $\CC_{\loc}^{\infty}(Y \setminus E)$ to $\vph_0 \in \PSH(Y \setminus E, \pi^*\omega) \cap \CC^{\infty}(Y \setminus E)$ which satisfies $(v,w\ell^{\ext})$-cscK equations on $Y \setminus E$. 
Moreover, since $\vph_\vep -\sup_Y  \vph_\vep$ is uniformly bounded on $Y$, we infer that $\sup_{Y \setminus E}|\vph_0|\leq C_0$ for some $C_0>0$. 
As $\vph_0$ is bounded from above on $Y \setminus E$, the upper semi-continuous extension of $\vph_0$ to the whole $Y$ is an $\pi^\ast \om$-psh, and it is also bounded on $Y$ by a lower bound of $\vph_0$ on $Y \setminus E$. 
We still denote this extension by $\vph_0$ afterward.  
This deduces the existence of a singular weighted cscK metric $\omega+dd^c\psi_0$ in $\{\omega\}$ on $X$, where $\psi_0\in \PSH(X, \omega)\cap  L^\infty(X) \cap \CC^{\infty}(X^\reg)$ and $ \varphi_0=\pi^*\psi_0$. 

\smallskip
\noindent{\bf Part 2: Minimizer.}
It remains to show that $\psi_0$ is a minimizer of $\M_{X}^{\rm rel}$ on $\CE^{1,T}_\om$. 
Note that $\vph_\vep$ are uniformly bounded and converging locally smoothly to $\pi^\ast \psi_0$ on $Y \setminus E$. 
Once can derive $\E_{\vep,vwl^{\ext}}(\vph_\vep) \to \E_{vwl^{\ext}}(\pi^\ast \psi_0)$ as $\vep \to 0$. 
Thus, by \eqref{eq:BJT_Mab_ineq} with $a =1$ and by the semi-continuity with respect to strong convergence of $\widehat{\H}_{\vep,v}$ and $\E_{\vep,v}^{-\widehat{\Ric}}$ 
(cf. \cite[Lem.~4.22]{Boucksom_Jonsson_Trusiani_2024}) , one gets
\[
    \liminf_{\vep \to 0}     \M_{\vep}^{\rm rel}  (\vph_{\vep}) 
    \geq \widehat{\H}_{v}(\pi^* \psi_0) 
    + \E^{-\widehat{\Ric}}_{v}(\pi^* \psi_0) 
    + \E_{vwl^{\ext}}(\pi^* \psi_0), 
\] 
where we used the fact that $ \int_Y K_1\rho_1 \MA_{\vep,v}(0)\rightarrow \int_Y K_1\rho_1 v(m_{\pi^*\om} ) (\pi^* \omega)^n =0$. 
It follows from \cite[Lem.~4.26]{Boucksom_Jonsson_Trusiani_2024} that 
\[
    \widehat{\H}_{v}(\pi^* \psi_0) + \E^{-\widehat{\Ric}}_{v}(\pi^* \psi_0) \geq \H_{\mu_X} (\psi_0)+   \E^{-{\Ric(\mu_X)}}_{v}(\psi_0). 
\]
This shows that
\begin{equation}\label{eq:lsc_Mab_soln}
    \liminf_{\vep \to 0}     \M_{\vep}^{\rm rel} (\vph_\vep)
    \geq     \M_X^{\rm rel} (\psi_0).
\end{equation}

\smallskip
Fix an arbitrary $u \in \CE^{1,T}_\om$. 
Without loss of generality, assume that $\H_{v, \mu_X}(u) < +\infty$ and denote by $f = \om^n_u/\omega^n$ which is $T$-invariant.
By \cite[Lem.~3.4]{Pan_To_Trusiani_2023}, there exists $0 \leq f^j \in \CC^\infty(X)$ converging to $f$ in $L^\chi$ as $j \rightarrow 0$ with $\chi(s):= (s+1)\log(s+1)-s$. 
Consider $f^{T,j}(x):= \int_T f^j(\sigma \cdot x)d\mu(\sigma)$, where $\mu$ is the  Haar measure on $T$ normalized by $\int_T d\mu(\sigma) = 1$.
We claim that $f^{T,j} \in \CC^\infty(X)$ also converges to $f$ in $L^\chi$. 
Indeed, since $\chi$ is convex, 
\[
    \chi( |f^{ T,j}-f|(x))  
    = \chi \lt(\lt|\int_T(f^{ T,j}-f)(\sigma\cdot x)d\mu(\sigma) \rt| \rt) 
    \leq \int_T \chi(|f^j-f|(\sigma\cdot x)) d\mu(\sigma).
\] 
By Fubini's theorem and $T$-invariance of $\om$, we obtain that 
\begin{align*}
    &\int_{x \in X} \chi(|f^{j, T}-f|(x)) \om^n(x)
    \leq \int_{x \in X} \lt(\int_{\sm \in T} \chi(|f^j-f|(\sigma\cdot x)) d\mu(\sigma)\rt) \om^n(x)\\
    &= \int_{\sm \in T} \lt(\int_{x \in X} \chi(|f^j-f| (\sm \cdot x)) \omega^n(x)\rt) d\mu(\sigma) 
    = \int_{\sm \in T} \lt(\int_{x \in X} \chi(|f^j-f| (\sm \cdot x)) \omega^n(\sm \cdot x)\rt) d\mu(\sigma).
\end{align*}
Hence, $f^{T,j}$ converges to $f$ in $L^\chi$.  
Consider $u_j \in \CC^\infty(X^\reg) \cap L^\infty(X)$ solving 
\[
    (\om + \ddc u_j)^n = c_j f^{T,j} \om^n 
    \quad\text{with}\quad 
    \sup_X u_j = 0
\] 
where $c_j > 0$ is a normalizing constant. 
Moreover, it follows from \cite[Thm.~1.3]{Cho_Choi_2024} that $u_j$ is continuous on $X$. 
By \cite[Lem.~3.4]{Pan_To_Trusiani_2023}, $u_j$ converges strongly to $u$, and $\H_{\omega^n}(u_j) \rightarrow \H_{\omega^n}(u)$. 
Since $u_j$ converges to $u$ strongly, $u_j$ converges to $u$ in $W^{1,2}(\om^n)$ (cf. \cite[Lemma 1.9]{BBEGZ_2019}) and thus $v(m_{u_j})$ converges almost everywhere to $v(m_u)$ (see Definition \ref{def_weighted_setting} (i)). 
We have 
\begin{equation}\label{eq:wH_split}
    \H_{\mu_X,v}(u_j) 
     = \H_{\omega^n, v}(u_j) + \int_X  v(m_{u_j})  c_j f^{T, j} \log \left(\frac{\om^n}{\mu_X}  \right)  \omega^n.
\end{equation}
Note that $g := \log(\om^n/\mu_X)$ is quasi-psh. 
For the second term in \eqref{eq:wH_split}, consider
\begin{align*}
    \int_X v(m_{u_j}) c_j g f^{T, j} \omega^n 
    - \int_X v(m_u) g f \omega^n
    &= \int_X v(m_{u_j}) c_j g (f^{T, j}-f) \omega^n 
    + \int_X [v(m_{u_j}) c_j-v(m_u)] g f \omega^n.
\end{align*}
By H\"older--Young inequality, 
\[
    \left|\int_X v(m_{u_j}) c_j g (f^{T, j}-f) \omega^n\right| \leq \|f^{T,j}-f\|_{L^\chi} \|v(m_{u_j}) c_j g\|_{L^{\chi^\ast}} \xrightarrow[j \to \infty]{} 0
\]
as $\|v(m_{u_j}) c_j g\|_{L^{\chi^\ast}}$ is uniformly bounded by the uniform control of $v$, $c_j$ and exponential integrability of $g$ by quasi-plurisubharmonicity.  On the other hand, it follows from Lemma \ref{lem_convergence_entropy}, we have $\H_{\om^n,v}(u_j) \to \H_{\om^n, v}(u)$.
This shows that $\H_{\mu_X, v}(u_j) \to \H_{\mu_X,v}(u)$ and thus, $\M_{v,wl^{\ext}}(u_j) \to \M_{v,wl^{\ext}}(u)$. 

\smallskip
Then for each $j$ fixed, one can find a family of functions $u_{j, \vep} \in \PSH(Y, \om_\vep)$ such that 
\begin{itemize}
    \item $(u_{j,\vep})_{\vep \in (0,1)}$ are uniformly bounded and continuous;
    \item $u_{j,\vep}$ converges locally smoothly on $Y \setminus E$ as $\vep \to 0$;
    \item $u_{j,\vep}$ decreases to $\pi^\ast u_j$ as $\vep \to 0$.
\end{itemize}
Since $\pi^* u_j$ is continuous on $Y$, Dini's theorem implies $ u_{j,\vep} $ converges uniformly to $\pi^*u_{j}$ as $\vep\rightarrow 0$. 
Indeed, for a fixed $j$, consider $u_{j,\vep}$ the unique solution to the following equation
\begin{equation*}
    (\om_\vep+dd^c u_{j,\vep})^n
    = e^{u_{j,\vep} - \pi^\ast u_j} \pi^* (c_j f^{T,j} \om^n). 
\end{equation*}  
Then by \cite{EGZ_2009}, one has a uniform $\CC^0$-estimate for $u_{j,\vep}$ on $Y$, local $\CC^2$-estimate for $u_{j,\vep}$ away from $E$, and $u_{j, \vep}$ converges locally smoothly to $\pi^* u_j$ in $Y \setminus E$. 
We now check that $u_{j,\vep}$ is decreasing as $\vep \to 0$.
For $0 < \vep_1 < \vep_2$, we have 
\[
    (\om_{\vep_2} + \ddc u_{j, \vep_1})^n 
    \geq (\om_{\vep_1} + \ddc u_{j, \vep_1})^n 
    = e^{u_{j,\vep_1} - \pi^\ast u_j} \pi^* (c_j f^{T,j} \om^n),
\]
so $u_{j,\vep_1}$ is a subsolution to the equation 
\[
    (\om_{\vep_2} + \ddc \vph) = e^{\vph} e^{-\pi^\ast u_j} \pi^* (c_j f^{T,j} \om^n).
\] 
Thus, we obtain that $u_{j,\vep_1} \leq u_{j,\vep_2}$ for any $0 < \vep_1 < \vep_2$. 

\smallskip
Combining with \eqref{eq:lsc_Mab_soln}, we get 
\[
    \M_X^{\rm rel} (\psi_0)
    \leq \liminf_{\vep \to 0} \M_{\vep}^{\rm rel} (\vph_{\vep}) 
    \leq \liminf_{\vep \to 0} \M_{\vep}^{\rm rel}(u_{\vep, j})  = \M_X^{\rm rel} (u_j), 
\]
where the second inequality follows from the fact that $\vph_\vep $ is a minimizer for $\M_{\vep, v, w l^{\ext}}$ and the last equality follows from Lemma \ref{lem_convergence_Mabuchi_smt} below. 
Letting  $j\rightarrow +\infty$, we obtain that $\M_{v,wl^{\ext}}(\psi_0) \leq \M_{v,wl^{\ext}}(u)$ for arbitrary $u \in \CE^{1,T}_\om$ and this finishes the proof.
\end{proof}

We recall a corollary of the Lebesgue dominated convergence theorem:
\begin{lem}\label{lem_convergence_entropy}
Fix a measure $\mu_X$ on $X$ and $\mu = f\mu_X$ another measure on $X$ with $f \in L^1(X,\mu_X)$ and $\H_{\mu_X}(\mu) < +\infty$. 
Assume that $\mu_j = f_j \mu_X$ is a sequence of measures such that $f_j$ converges to $f$ almost everywhere with respect to $\mu_X$, $\mu_j(X)\rightarrow\mu(X)$ and $\H_{\mu_X}(\mu_j)\rightarrow \H_{\mu_X}(\mu)$. 
If $(v_j)_{j\geq 1}$ is a sequence of positive functions converging to $v$ almost everywhere with respect to $\mu_X$ as $j\rightarrow +\infty$, and uniformly bounded from above, then
\[
    \H_{\mu_X} (v_j\mu_j)\underset{j\rightarrow +\infty}{\longrightarrow}  \H_{\mu_X}(v\mu).
\]
\end{lem}

\begin{proof}
From the hypothesis, we have
\[
    \int_Xf_j\log f_j \mu_X \underset{j\rightarrow +\infty}{\longrightarrow} \int_X f\log f \mu_X < +\infty.
\] 
Now we write
\begin{align*}
    \H_{\mu_X}(v_j\mu_j) 
    &= \int_{X} \log(v_j f_{j}) (v_jf_{j}) \mu_X 
    =  \int_X v_j\log(v_j)f_{j} \mu_X + \int_{X} v_j \log(f_{j}) f_{j} \mu_X \\
    &= \int_X v_j\log(v_j) f_{j} \mu_X + \int_{X} v_j (f_j\log(f_{j}) + e^{-1}) \mu_X 
    - e^{-1}\int_X v_j \mu_X. 
\end{align*}
As $(v_j)_j$ are uniformly bounded, we have a constant $M > 10$ such that $0 < v_j + v\leq M$ for all $j$. 
Hence, 
\[
    0 \leq |v_j\log(v_j)| f_j \leq M\log M f_j, 
\]
and 
\[
    0\leq v_j (f_j\log(f_j) +e^{-1})
    \leq M (f_j\log(f_j) +e^{-1}).
\]
As $v_j \to v$ and $f_j \to f$ a.e., by a generalized Lebesgue dominated convergence theorem (cf. \cite[Exercise 2.20, p.59]{Folland}), we have  
\[
    \lim_{j\to+\infty}\int_X v_j \log(v_j) f_{j} \mu_X  
    = \int_X v \log(v) f \mu_X,  
\] 
and  
\[
    \lim_{j\to+\infty} \int_{X} v_j (f_j\log(f_{j}) + e^{-1}) \mu_X  
    = \int_{X} v (f\log(f) + e^{-1}) \mu_X. 
\]
From the above to limit and the hypothesis, we obtain 
$\H_{\mu_X} (v_j\mu_j)\rightarrow \H_{\mu_X}(v\mu)$ as $j \to +\infty$.
\end{proof}

{ 
\begin{lem}\label{lem_convergence_Mabuchi_smt} 
Consider $u \in \PSH(X, \om)^T \cap \CC^0(X)$ such that $u$ is smooth on $X^\reg$ and $\H_{\mu_X}(u) < +\infty$. 
Assume that $u_{\vep} \in \PSH(Y, \om_\vep)^T \cap \CC^{0}(Y)\cap \CC^\infty(Y\setminus E)$ converges smoothly to $\pi^* u$ in $Y \setminus E$, uniformly on $Y$  and $\H_{\vep, \om^n_Y} (u_\vep)\rightarrow \H_{\om^n_Y}(\pi^* u)$ as $\vep \to 0$. 
Then $\M_{\vep}^{\rm rel}(u_\vep) \rightarrow\M^{\rm rel}_X (u)$ as $\vep \rightarrow 0$. 
\end{lem}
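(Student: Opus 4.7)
The plan is to show convergence of each of the three constituents of
\[
    \M^{\rm rel}_\vep(u_\vep) = \H_{\vep, v, \om_Y^n}(u_\vep) + \E_{\vep, v}^{-\Ric(\om_Y)}(u_\vep) + \E_{\vep, v w \ell^{\ext}_\vep}(u_\vep)
\]
as $\vep \to 0$, and then identify the limit on $Y$ with $\M^{\rm rel}_X(u)$ via the comparison \eqref{eq:BJT_Mab_ineq} with $a=1$ together with \cite[Lem.~4.26]{Boucksom_Jonsson_Trusiani_2024}. As a preliminary observation, the uniform convergence $u_\vep \to \pi^\ast u$ on $Y$ combined with the smooth convergence on $Y \setminus E$ gives, by Bedford--Taylor continuity, weak convergence of the mixed products $\om_{\vep, u_\vep}^k \wedge \om_\vep^{n-k}$ to $(\pi^\ast\om+\ddc \pi^\ast u)^k \wedge (\pi^\ast \om)^{n-k}$; the moment maps $m_{u_\vep}$ converge smoothly on $Y \setminus E$ to $m_{\pi^\ast u}$ with values in a fixed compact subset of $\mathfrak{t}^\vee$; and $\ell^{\ext}_\vep \to \ell^{\ext}$.

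For the energy pieces $\E_{\vep, v w \ell^{\ext}_\vep}(u_\vep)$ and $\E_{\vep, v}^{-\Ric(\om_Y)}(u_\vep)$, the plan is to express each as a line integral along $t \mapsto t u_\vep$ via the variational characterizations recalled after \eqref{eq_w_laplacian}. The resulting integrands—products of $v, v', w$, $\ell^{\ext}_\vep$ evaluated at $m_{t u_\vep}$ against the bounded currents $\om_{\vep, tu_\vep}^n$ or $n\Ric(\om_Y)\wedge \om_{\vep,tu_\vep}^{n-1}$—are uniformly bounded in $\vep$ and $t$, and converge weakly as measures on $Y$. Dominated convergence in $t \in [0,1]$ then yields convergence to the corresponding energies evaluated at $\pi^\ast u$ with reference $\pi^\ast \om$.

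The delicate step is the convergence of the weighted entropy. Setting $g_\vep := \om_{\vep, u_\vep}^n/\om_Y^n$ and $v_\vep := v(m_{u_\vep})$, decompose
\[
    \H_{\vep, v, \om_Y^n}(u_\vep) = \int_Y v_\vep \log v_\vep \cdot g_\vep\, \om_Y^n + \int_Y v_\vep\, g_\vep \log g_\vep\, \om_Y^n.
\]
The first integral passes to the limit because $v_\vep \log v_\vep$ is uniformly bounded, continuous in $m_{u_\vep}$, and $v_\vep \to v(m_{\pi^\ast u})$ smoothly on $Y \setminus E$, while the measures $\om_{\vep, u_\vep}^n$ converge weakly. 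For the second integral, the hypothesis $\H_{\vep, \om_Y^n}(u_\vep) \to \H_{\om_Y^n}(\pi^\ast u)$ furnishes a uniform $L\log L$ bound on $g_\vep$, which renders $\{g_\vep \log g_\vep\}_\vep$ uniformly integrable with respect to $\om_Y^n$. Combined with the pointwise a.e.\ convergence $g_\vep \to g := \om_{\pi^\ast u}^n/\om_Y^n$ on $Y \setminus E$ (a set of full $\om_Y^n$-measure) and the uniform convergence of the bounded weights $v_\vep$, Vitali's theorem delivers the desired limit $\int_Y v(m_{\pi^\ast u}) g \log g\, \om_Y^n$.

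Collecting the three limits gives
\[
    \lim_{\vep\to 0} \M^{\rm rel}_\vep(u_\vep) = \H_{v, \om_Y^n}(\pi^\ast u) + \E_{\pi^\ast\om, v}^{-\Ric(\om_Y)}(\pi^\ast u) + \E_{\pi^\ast\om, v w \ell^{\ext}}(\pi^\ast u).
\]
Rewriting via \eqref{eq:BJT_Mab_ineq} with $a=1$, using $\widehat{\Ric} = \Ric(\om_Y) + K_1\ddc \rho_1$, this equals $\widehat{\H}_v(\pi^\ast u) + \E_{\pi^\ast\om,v}^{-\widehat{\Ric}}(\pi^\ast u) + \E_{\pi^\ast\om, v w \ell^{\ext}}(\pi^\ast u) - \int_Y K_1\rho_1 \MA_v(0)$. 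Since $u$ is continuous on $X$, smooth on $X^\reg$, and has finite $\H_{\mu_X}$, equality holds in \cite[Lem.~4.26]{Boucksom_Jonsson_Trusiani_2024}, identifying the above with $\H_{v,\mu_X}(u) + \E_v^{-\Ric(\mu_X)}(u) + \E_{vw\ell^{\ext}}(u) = \M^{\rm rel}_X(u)$. The main obstacle in this scheme is the weighted entropy convergence: passing from the scalar assumption on the unweighted entropy to the joint integrability statement that resists the variable weight $v_\vep$ and the blow-up of $g_\vep$ along $E$, for which uniform integrability via the $L\log L$ bound is precisely the right tool.
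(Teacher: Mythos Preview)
Your overall scheme—prove convergence of each of the three summands of $\M^{\rm rel}_\vep(u_\vep)$ to the corresponding quantity at $\pi^\ast u$, then identify the limit with $\M^{\rm rel}_X(u)$—coincides with the paper's. Two points deserve comment.

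First, for the weighted entropy, the claim that a uniform $L\log L$ bound on $g_\vep$ renders $\{g_\vep\log g_\vep\}$ uniformly integrable is incorrect as written: an $L\log L$ bound gives uniform integrability of $g_\vep$, not of $g_\vep\log g_\vep$. What saves you is that you have more than a bound—the hypothesis gives $\int g_\vep\log g_\vep \to \int g\log g$, and since $(x\log x)_-$ is bounded, Scheff\'e's lemma (equivalently the generalized dominated convergence theorem, which is what the paper invokes) yields $g_\vep\log g_\vep\to g\log g$ in $L^1$; then the uniformly bounded, a.e.\ convergent weights $v_\vep$ pass through.

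Second, your final identification detours unnecessarily through $\widehat{\Ric}$, $\rho_1$, and \cite[Lem.~4.26]{Boucksom_Jonsson_Trusiani_2024}, where you assert equality in what is stated as an inequality without justification. The paper establishes the needed identity directly: setting $D=K_Y-\pi^\ast K_X$ and $\rho_{Y/X}=\log(\pi^\ast\mu_X/\om_Y^n)$, one has $[D]-\ddc\rho_{Y/X}=\pi^\ast\Ric(\mu_X)-\Ric(\om_Y)$, and since $\E^{[D]}_{\pi^\ast\om}(\pi^\ast u)=0$ (the $\pi$-exceptional divisor pairs trivially against products of pulled-back classes), the argument of \cite[Lem.~4.15]{Boucksom_Jonsson_Trusiani_2024} gives
\[
    \H_{\om_Y^n,v}(\pi^\ast u)+\E^{-\Ric(\om_Y)}_{\pi^\ast\om}(\pi^\ast u)=\H_{\mu_X,v}(u)+\E^{-\Ric(\mu_X)}_{\om}(u).
\]
This is exactly the identification you need, and it makes no appeal to Condition~\ref{cond_A} or to $\rho_1$.
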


\begin{proof}
The proof follows a similar approach to that of \cite[Prop.~4.16]{Boucksom_Jonsson_Trusiani_2024} where the authors assume $u$ to be smooth on $X$ instead of $X^\reg$. 
Here, we only provide a brief outline and indicate the necessary modifications. 

\smallskip
Set $D = K_Y - \pi^* K_X = \sum_i a_i E_i$ and define $\rho_{Y/X} = \log \frac{\pi^\ast \mu_X}{\om_Y^n}$. 
Then we have 
\[
    [D] - dd^c \rho_{Y/X} = \pi^* \Ric(\mu_X) - \Ric(\om_Y^n).
\]
For $u \in \PSH(X, \omega) \cap L^\infty(X)$ and any smooth $(1,1)$-form $\theta$ (or positive closed $(1,1)$-current), 
\[
    \E^{\theta}_{\pi^* \om}(\pi^* u) := \sum_{j=0}^{n-1} \int_Y (\pi^*u) \theta\wedge (\pi^*\om + dd^c\pi^* u)^j \wedge (\pi^* \om)^{n-1-j}.
\]
Note that $\E^{[D]}_{\pi^* \om}(\pi^* u) = 0$ for any $u \in \PSH(X,\om) \cap L^\infty(X)$. 
Following the same argument in \cite[Lem.~4.15]{Boucksom_Jonsson_Trusiani_2024}, one can infer that for any $u \in \PSH(X, \om)\cap L^\infty(X)$ and smooth on $X^\reg$ we have
\[
    (\H_{ \om_Y^n, v} (\pi^* u) + \E^{-\Ric(\omega_Y)}_{\pi^*\om}(\pi^*u ))-( \H_{\mu_X, v}(u)+ \E^{-\Ric(\mu_X)}_{\om}(u))  = \E^{[D]}_{\pi^*\om} (\pi^* u)=0;
\]
therefore,
\begin{equation}\label{eq_ent_ener}
    \H_{ \om_Y^n, v} (\pi^* u) + \E^{-\Ric(\omega_Y)}_{\pi^*\om}(\pi^*u )= \H_{\mu_X, v}(u)+ \E^{-\Ric(\mu_X)}_{\om}(u).
\end{equation}

\smallskip
Since $u_\vep$ converges uniformly to $\pi^* u$, we have 
$\E_{\om_\vep}^{-\Ric(\om_Y)} (u_\vep) \to \E_{\pi^* \om}^{-\Ric(\om_Y)} (\pi^*u)$ as $\vep \to 0$. 
Moreover, since $\ell^{\ext}_\vep \rightarrow \ell^{\ext}$ in $\frak{t} \oplus \BR$, we have 
$\E_{\vep, vw\ell_\vep^{\ext}} (u_\vep) \to \E_{\om, vw\ell^{\ext}} (u)$. 
Since $u_\vep \to \pi^\ast u$ locally smoothly on $Y \setminus E$, we have $v (m_{u_\vep})$ converges to $v(m_{\pi^*u})$ almost everywhere. 
In addition, $\H_{\vep, \om_Y^n} (u_\vep)\rightarrow \H_{\om_Y^n}(\pi^* u)$, so Lemma \ref{lem_convergence_entropy} implies $\H_{\vep, \om_Y^n, v} (u_\vep)\rightarrow \H_{\om_Y^n, v}(\pi^* u)$. 
All in all, these yield 
\begin{align*}
    \lim_{\vep \to 0} \M_{\vep}^{\rm rel}(u_\vep) &= \H_{\om_Y^n,v}(\pi^* u) + \E_{\pi^* \om}^{-\Ric(\om_Y)} (\pi^*u) + \E_{\om, vw\ell^{\ext}} (u)\\
    &= \H_{\mu_X, v}(u)+ \E^{-\Ric(\mu_X)}_{\om}(u) + \E_{\om, vw\ell^{\ext}} (u) 
    = \M_X^{\rm rel} (u),
\end{align*}
as required, where the second equality follows from \eqref{eq_ent_ener}.
\end{proof}
}

\subsubsection{Singular cscK and extremal metrics} 
We begin by considering the problem of finding a singular cscK metric.

\begin{proof}[Proof of  Corollary \ref{bigthm:cscK}]
Take $T \subset \Aut_{\red}(X) $ to be a maximal torus and $\pi: Y\rightarrow X$ to be the $T$-equivariant resolution of singularities in Condition \ref{cond_A}. For cscK metrics, we have $v=1$, $w=1$, and $\ell^{\ext}= \bar{s}$ where $\bar{s} := \frac{n}{V_\om} c_1(X)\cdot \{\om\}^{n-1}$, so   
$\M_\om=\M_{v,\ell^{\ext}}$. By the hypothesis, $\M_{v,\ell^{\ext}}$ is  $T_\BC$-coercive, the existence of a singular cscK metric on $X$ now follows from Theorem \ref{thm_existence}. In this case, $\ell^{\ext}_\vep $ is some affine function on $\frak{t}^\vee$, converging to $\ell^{\ext}= \bar{s}$ as $\vep \to 0$ by \cite[Lem.~4.18]{Boucksom_Jonsson_Trusiani_2024}.
Hence, this singular cscK metric on $X$ is approximated by extremal metrics in $[\omega_\vep]$.
\end{proof} 

Let $X$ be a normal compact K\"ahler variety with log terminal singularities. 
Fix a K\"ahler class $\alpha$ and take a maximal torus $T \subset \Aut_{\red}(X)$.   Let $\pi: Y\rightarrow X$ be a $T$-equivariant resolution of singularities. 
We now consider the problem of the existence of singular extremal metrics in the K\"ahler class $\alpha$.

\smallskip
Let $\xi_{\ext}$ be the extremal vector field defined by $T$ and $\alpha$ (cf. Section \ref{eg_extremal}) and let $\om \in \alpha$ be a $T$-equivariant K\"ahler metric. 
A singular extremal metric in this setting is defined as a positive current of the form
$\pi^*\om + dd^c \vph$ where $\vph \in \PSH(Y, \pi^\ast \om) \cap L^\infty (Y)$ and $\vph$ is smooth away from $\Exc(\pi)$. 
Additionally, $\pi^* \om + dd^c \vph$ is a genuine $(1, \ell^{\ext})$-cscK metric on $Y \setminus \Exc(\pi)$ where $\ell^{\ext}(p) = \langle \xi_{\ext}, p \rangle+ \bar{s}$ , where $\bar{s}:= \frac{n}{V_\om} c_1(X)\cdot \{\om\}^{n-1}$. 

\smallskip
Under Condition \ref{cond_A}, Theorem~\ref{thm_existence} implies the following existence result of singular extremal metrics: 

\begin{thm}\label{thm_extremal}
Under the above setting, moreover, assume that $X$ satisfies Condition~\ref{cond_A}. 
If the weighted Mabuchi functional $\M_{\omega, v, \ell^{\ext}}$ on $X$ is $T_\BC$-coercive, then $X$ admits a singular extremal metric in $\alpha$. 
\end{thm}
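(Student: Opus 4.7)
The plan is to deduce Theorem~\ref{thm_extremal} directly from Theorem~\ref{thm_existence} (i.e., Theorem~\ref{bigthm_existence_0}) by specializing the weighted data to $v \equiv 1$ and $w_0 \equiv 1$. With this choice, the resulting weight in Theorem~\ref{thm_existence} is $w = w_0 \, \ell^{\ext}_{\om, v, w_0} = \ell^{\ext}_{\om, 1, 1}$, which coincides with the affine function $w_{\ext}(p) = \langle \xi_{\ext}, p\rangle + c_{\ext}$ by the computation of Section~\ref{eg_extremal}. Hence a singular $(1, \ell^{\ext})$-cscK metric in $\{\om\}$ is, by the definition given just before the statement, a singular extremal metric in $\alpha$.

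First, I would check that Setting~\ref{sett:set_sec3} is in force: $X$ is log terminal by hypothesis, $T \subset \Aut_{\red}(X)$ is a maximal compact torus, and Condition~\ref{cond_A} provides a $T$-equivariant K\"ahler resolution $\pi : Y \to X$. The auxiliary $T$-invariant quasi-psh function $\rho$, smooth on $Y \setminus E$, with analytic singularities along $E$ and such that $\pi^\ast \om + \ddc \rho$ is a K\"ahler current, exists by \cite[Thm.~3.17]{Boucksom_2004}. Then I would verify the two hypotheses of Theorem~\ref{bigthm_existence_0}: the concavity of $\log v$ is automatic since $v \equiv 1$, and the $T_\BC$-coercivity of $\M_{\om, 1, w_{\ext}}$ is exactly the hypothesis of Theorem~\ref{thm_extremal}. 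Invoking Theorem~\ref{thm_existence} then yields $\psi_0 \in \PSH(X, \om) \cap L^\infty(X) \cap \CC^\infty(X^\reg)$ such that $\om + \ddc \psi_0$ is a singular $(1, w_{\ext})$-cscK metric in $\{\om\}$; pulling back, $\pi^\ast \om + \ddc(\pi^\ast \psi_0)$ is a singular extremal metric in $\alpha$ in the sense defined above.

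The only subtle point, and the one I would double-check, is the match between the two definitions of the extremal affine function: the $\ell^{\ext}_{\om, 1, 1}$ attached to $(\om, 1, 1)$ via the pairing identity~\eqref{eq_ext_affine} in the singular formalism, and the classical $w_{\ext}$ arising from the extremal vector field $\xi_{\ext}$ in Section~\ref{eg_extremal}. Bimeromorphic invariance of the extremal vector field and the identification $\Aut_{\red}(X) \simeq \Aut_{\red}(\tilde X)$ recalled in Section~\ref{sect_red_aut} reduce this to the smooth case, where the two definitions agree. Beyond this bookkeeping, no genuinely new analytic input is required: the statement is a direct corollary of the weighted existence theorem.
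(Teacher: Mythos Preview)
Your proposal is correct and matches the paper's approach exactly: the paper does not give a separate proof of Theorem~\ref{thm_extremal} but simply states it as a direct consequence of Theorem~\ref{bigthm_existence_0} with $v\equiv 1$ and $w_0\equiv 1$. Your careful verification that $\ell^{\ext}_{\om,1,1}=w_{\ext}$ via Section~\ref{eg_extremal} and that the concavity of $\log v$ is trivially satisfied is precisely the bookkeeping the paper leaves implicit.
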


\subsection{Constructing examples of singular cscK metrics}\label{sec:examples}

We shall give a way to construct examples of singular cscK metric in the spirit of Arezzo--Pacard \cite{Arezzo_Pacard_2006}   (see also \cite{Arezzo_Pacard_2009,Arezzo_Pacard_Singer_2011,Szekelyhidi_2015}) and use variational argument and our existence result. 
Before illustrating the process, we need the following lemma: 

\begin{lem}\label{lem:openness_coercivity}
Let $(X, \om)$ be a compact K\"ahler variety with log terminal singularities and let $f: Y \to X$ be a blowup along a compact submanifold $S \subset X^{\reg}$ of codimension $\geq 2$. 
Consider $\om_Y$ a K\"ahler metric on $Y$ and $\om_\vep = f^\ast \om + \vep \om_Y$ for $\vep \in [0,1]$. 
If $\M_\om$ is coercive, then $\M_{\om_\vep}$ is coercive for all sufficiently small $\vep > 0$. 
\end{lem}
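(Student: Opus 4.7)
The plan is to reduce the statement to the openness of uniform coercivity on a common resolution, for which \cite[Thm.~A]{Boucksom_Jonsson_Trusiani_2024} is directly applicable.

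First I would set up the geometry: because $S\subset X^{\reg}$ is a smooth submanifold of codimension $\geq 2$, the blowup $f:Y\to X$ is an isomorphism over an open neighbourhood of $X^{\sing}$, hence $Y$ is again a compact K\"ahler variety with log terminal singularities, with $Y^{\sing}$ identified with $X^{\sing}$ via $f$. I would then fix a $T$-equivariant resolution $\pi_Y:Z\to Y$ of $Y$ chosen so that the composition $\pi_X:=f\circ\pi_Y:Z\to X$ is also a $T$-equivariant resolution of $X$, together with a $T$-invariant K\"ahler metric $\om_Z$ on $Z$. By standard log-resolution constructions (essentially the same data used in Condition~\ref{cond_A}, see \cite[Sec.~4.1]{Boucksom_Jonsson_Trusiani_2024}), the triple $(Z,\pi_X,\om_Z)$ and the triple $(Z,\pi_Y,\om_Z)$ can simultaneously be arranged to satisfy Condition~\ref{cond_A} for $(X,\om)$ and for $(Y,f^{*}\om)$ respectively; the point is that the exceptional divisors of the two resolutions agree up to the $f$-exceptional divisor $E$, which lies over $S\subset X^{\reg}$ and contributes only non-negative discrepancies.

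Next I would apply \cite[Thm.~A]{Boucksom_Jonsson_Trusiani_2024} to $\pi_X:Z\to X$: the coercivity of $\M_{\om}$ on $X$ yields constants $A_0,B_0,t_0>0$ such that
\[
\M_{\pi_X^{*}\om+t\om_Z}(\psi)\geq A_0\,d_T(\psi,0)-B_0
\]
for every $\psi\in\CE^{1,T}_{\nmlz}(Z,\pi_X^{*}\om+t\om_Z)$ and every $t\in(0,t_0]$. A close look at BJT's proof shows that the lower bound only uses Condition~\ref{cond_A} and an upper bound on the perturbing form in terms of $\om_Z$; therefore the same inequality persists when $t\om_Z$ is replaced by any $T$-invariant closed positive $(1,1)$-form $\theta_Z$ on $Z$ with $\theta_Z\leq t_0\om_Z$. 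Taking $\theta_Z=\vep\,\pi_Y^{*}\om_Y+s\om_Z$ for $\vep+s$ small (and noting $\pi_Y^{*}\om_Y\leq C\om_Z$ for some fixed $C$), one deduces
\[
\M_{\pi_Y^{*}\om_\vep+s\om_Z}(\psi)\geq A_0\,d_T(\psi,0)-B_0
\]
uniformly for $\psi\in\CE^{1,T}_{\nmlz}(Z,\pi_Y^{*}\om_\vep+s\om_Z)$ and for $s\in(0,s_0]$, where $s_0$ depends only on $t_0,C,\vep$.

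Finally I would descend from $Z$ back to $Y$. For any $\phi\in\CE^{1,T}_{\nmlz}(Y,\om_\vep)$, the pullback $\pi_Y^{*}\phi$ is admissible for the class $\pi_Y^{*}\om_\vep+s\om_Z$ for every $s>0$, and letting $s\to 0$ the compatibility of entropy, twisted Monge--Amp\`ere energies and $d_T$-distance under the resolution $\pi_Y$ (cf.\ \cite[Sec.~4.1.2]{Pan_To_Trusiani_2023} and \cite[Lem.~4.22, Lem.~4.26]{Boucksom_Jonsson_Trusiani_2024}) gives
\[
\liminf_{s\to 0}\M_{\pi_Y^{*}\om_\vep+s\om_Z}(\pi_Y^{*}\phi)\geq \M_{\om_\vep}(\phi)
\qquad\text{and}\qquad
\lim_{s\to 0}d_T(\pi_Y^{*}\phi,0)_Z=d_T(\phi,0)_Y,
\]
after the standard normalization $\int_Y K_1\rho_1\,v(m_{\pi^{*}\om})(\pi^{*}\om)^n=0$ of the ancillary potential. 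Passing to the limit in the coercivity inequality of the previous paragraph yields $\M_{\om_\vep}(\phi)\geq A_0\,d_T(\phi,0)-B_0$, which is the desired coercivity on $Y$ for all sufficiently small $\vep>0$. The main obstacle is this last descent step: one must verify that the lower bound on the resolution survives the $s\to 0$ limit, which requires invoking the lower-semicontinuity of $\H_{v}+\E_v^{-\widehat{\Ric}}$ used in the proof of Theorem~\ref{thm_existence} together with the pullback identity \eqref{eq_ent_ener} relating the weighted energies on $Y$ and $Z$.
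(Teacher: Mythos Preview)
Your descent step contains a fatal direction error. You correctly state (and the references you cite indeed give) the inequality
\[
\liminf_{s\to 0}\M_{\pi_Y^{*}\om_\vep+s\om_Z}(\pi_Y^{*}\phi)\geq \M_{\om_\vep}(\phi),
\]
but this is useless for your purpose. From the coercivity on $Z$ you get $\liminf_s \M_{Z,s}(\pi_Y^{*}\phi)\geq A_0\,d_T(\phi,0)-B_0$; combining this with the displayed inequality tells you only that $\liminf_s \M_{Z,s}(\pi_Y^{*}\phi)$ dominates \emph{both} $\M_{\om_\vep}(\phi)$ and $A_0\,d_T(\phi,0)-B_0$, from which nothing follows about the relation between the latter two. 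To descend coercivity you would need the \emph{reverse} inequality $\M_{\om_\vep}(\phi)\geq \limsup_s \M_{Z,s}(\pi_Y^{*}\phi)$ for every $\phi\in\CE^{1}_{\nmlz}(Y,\om_\vep)$, and this is not available: the identity \eqref{eq_ent_ener} and Lemma~\ref{lem_convergence_Mabuchi_smt} give equality only for potentials that are continuous and smooth on the regular locus, not for arbitrary finite-energy $\phi$. A secondary issue is that you are importing Condition~\ref{cond_A}, which is not among the hypotheses of the lemma and is not needed.

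The paper avoids both problems by arguing directly on $Y$ via contradiction, without ever passing to a smooth resolution. The key observation is that for a blowup along $S\subset X^{\reg}$ one has $-\Ta_Y\geq -D\,f^{*}\om$ for some $D>0$; this yields $-n\,\E_{\Ta_Y,\om_\vep}\geq (n+1)D\,\E_{\om_\vep}$ and hence a uniform entropy bound for any putative violating sequence $u_k\in\CE^{1}_{\nmlz}(Y,\om_{\vep_k})$. One then runs the standard geodesic argument: on the $d_1$-geodesic from $0$ to $u_k$ pick $v_k$ at fixed distance $R$, use convexity of $\M_{\om_{\vep_k}}$ to bound $\M_{\om_{\vep_k}}(v_k)\leq AR$, apply strong compactness \cite[Thm.~2.10]{Boucksom_Jonsson_Trusiani_2024} to extract a limit $v_0\in\CE^{1}_{\nmlz}(X,\om)$, and invoke lower semicontinuity \cite[Lem.~4.6, 4.9, 4.16]{Boucksom_Jonsson_Trusiani_2024} to get $A_0R-B_0\leq \M_\om(v_0)\leq AR$, contradicting $A<A_0$ for $R$ large. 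Here lower semicontinuity is used in the correct direction, from $(Y,\om_{\vep_k})$ down to $(X,\om)$, where coercivity is already known.
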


\begin{proof}
The proof follows the same strategy in \cite[Thm.~4.11]{Pan_To_Trusiani_2023} and \cite[Thm.~A]{Boucksom_Jonsson_Trusiani_2024}.
Since $f$ is a blowup of $S$, for $m \in \BN^\ast$ so that $mK_Y$ and $mK_X$ are both $\BQ$-Cartier, there is a smooth hermitain metric $h_Y$ of $mK_Y$ such that 
\begin{equation}\label{eq:lt_modifi_cond}
    -\frac{1}{m}\Ta(mK_Y,h_Y) \geq - D f^\ast \om
\end{equation}
for some constant $D \geq 0$. 
Denote by $\Ta_Y = \frac{1}{m}\Ta(mK_Y,h_Y)$ and $\Ta_X = \frac{1}{m}\Ta(mK_X,h_X)$ for some hermitian metric $h_X$ on $mK_X$. 
We also set $\mu_Y$ (resp. $\mu_X$) to be the corresponding probability measure of $h_Y$ (resp. $h_X$). 

\smallskip
Recall that from \cite[Sec.~4]{Pan_To_Trusiani_2023}, 
\[
    \M_{\om_\vep} 
    = \H_{\mu_Y,\om_\vep} + \bar{s}_\vep \E_{\om_\vep} - n \E_{\Ta_Y,\om_\vep} - C_\vep
    \quad\text{on } \CE^1(Y,\om_\vep)
\]
and
\[
    \M_\om 
    = \H_{\mu_X,\om} + \bar{s} \E_{\om} - n \E_{\Ta_X,\om} - C
    \quad\text{on } \CE^1(X,\om)
\]
where $\bar{s}_\vep = \frac{n c_1(Y) \cdot [\om_\vep]^{n-1}}{[\om_\vep]^n}$, $\bar{s} = \frac{n c_1(X) \cdot [\om]^{n-1}}{[\om]^n}$, $C_\vep = \H_{\mu_Y,\om_\vep}(0) = \int_Y \log \lt(\frac{\om_\vep^n}{\mu_Y}\rt) \mu_Y$ and $C = \H_{\mu_X,\om_\vep}(0) = \int_X \log \lt(\frac{\om^n}{\mu_X}\rt) \mu_X$. 
Let $A_0, B_0>0$ be two constant such that $\M_{\om} \geq A_0 (-\E_\om) - B_0$ on $\CE^1_{\nmlz}(X,\om)$. 
We claim that for all $0 < A < A_0$ there are $\vep_0 > 0$ and $B > 0$ such that for all $\vep \in (0, \vep_0)$, 
\[
    \M_{\om_\vep} \geq A(-\E_{\om_\vep}) - B
\]s
on $\CE^1_{\nmlz}(Y, \om_\vep)$.

\smallskip
Suppose otherwise, for an $A \in (0, A_0)$, there are $\vep_k \to 0$, $B_k \to +\infty$, as $k \to +\infty$, and $u_k \in \CE^1_{\nmlz}(Y,\om_{\vep_k})$ such that 
\[
    \M_{\om_{\vep_k}}(u_k) < A(-\E_{\om_{\vep_k}} (u_k)) - B_k.
\]
Without loss of generality, one can assume that $u_k$ is bounded. 
Otherwise, by \cite[Lem.~3.4]{Pan_To_Trusiani_2023}, one can find a sequence bounded $\om_k$-psh functions $(v_{k,j})_j$ converging strongly to $u_k$ and their entropies also converge to $\H_{\mu_Y, \om_{\vep_k}}(u_k)$. 
Then for sufficiently large $j$, we have $\M_{\om_{\vep_k}}(v_{k,j}) < A(-\E_{\om_{\vep_k}}(v_{k,j})) - B_k$. 

\smallskip
Note that from \eqref{eq:lt_modifi_cond}, for all $\psi \in \CE_{\nmlz}^1(Y,\om_\vep)$, 
\begin{align*}
    -n \E_{\Ta_Y,\om_{\vep}}(\psi)
    &= - \frac{1}{V_\vep} \sum_{j=0}^{n-1} \int_Y \psi \Ta_Y \w \om_{\vep,\psi}^j \w \om_\vep^{n-1-j}\\
    &\geq \frac{1}{V_\vep} \sum_{j=0}^{n-1} \int_Y \psi D f^\ast \om \w \om_{\vep,\psi}^j \w \om_\vep^{n-1-j}
    \geq (n+1)D \E_{\om_\vep}(\psi).
\end{align*}
Hence, 
\[
    \H_{\mu_Y, \om_\vep}(u_k) + (\bar{s}_{\vep_k} + (n+1)D) \E_{\om_{\vep_k}}(u_k)
    \leq \M_{\om_{\vep_k}}(u_k) < A(-\E_{\om_{\vep_k}}(u_k)) - B_k.
\]
After enlarging $D$, one may assume that for all $\vep \in [0,1]$, $(n+1) D + A - \bar{s}_\vep > \dt$ for a uniform $\dt > 0$.  
Then we have $B_k < ((n+1)D + A - \bar{s}_\vep) (-\E_{\om_{\vep_k}}(u_k))$ and this implies that $d_k := -\E_{\om_{\vep_k}}(u_k) \to + \infty$ as $k \to +\infty$.
Let $g_k(s)$ be the $d_1$-geodesic connecting $0$ and $u_k$ in $\CE^1_{\nmlz}(Y,\om_{\vep_k})$. 
Fix a constant $R > 0$ and define $v_k := g_k(R)$.
By the convexity of Mabuchi functional \cite[Prop.~4.7]{Pan_To_Trusiani_2023}, 
\[
    \M_{\om_{\vep_k}}(v_k) \leq \frac{d_k - R}{d_k} \M_{\om_{\vep_k}}(0) + \frac{R}{d_k} \M_{\om_{\vep_k}}(u_k)
    \leq \frac{R}{d_k} (A d_k - B_k) \leq AR.
\]
From the expression of Mabuchi functional, we have $\H_{\mu_Y, \om_{\vep_k}}(v_k) \leq ((n+1)D + A - \bar{s}_{\vep_k})R$. 

\smallskip
In the argument below, although $Y$ is singular, corresponding proofs in  \cite{Boucksom_Jonsson_Trusiani_2024} proceed exactly the same.
By the strong compactness \cite[Thm.~2.10]{Boucksom_Jonsson_Trusiani_2024}, up to a subsequence, $v_k$ converges in $v_0 \in \CE^1_{\nmlz}(Y, f^\ast \om)$ and $\E_{\om_{\vep_k}}(v_k) \to \E_{f^\ast \om} (v_0)$ as $k \to +\infty$.
Note that $v$ can descend to a function in $\CE^1_{\nmlz}(X,\om)$, which we still denote by $v$.  
By \cite[Lem.~4.16]{Boucksom_Jonsson_Trusiani_2024}, $\H_{\mu_X,\om}(v) - n\E_{\Ta_X,\om}(v) \leq \H_{\mu_Y,f^\ast\om}(v) - n\E_{\Ta_Y,f^\ast\om}(v)$
and \cite[Lem.~4.6 and Lem.~4.9]{Boucksom_Jonsson_Trusiani_2024} shows that $\M_{\om_{0}}(v_0) \leq \liminf_{k \to +\infty} \M_{\om_{\vep_k}}(v_k)$.
All in all, we obtain
\[
    A_0 R - B_0 \leq \M_{\om}(v_0) \leq \liminf_{k \to +\infty} \M_{\om_{\vep_k}}(v_k) \leq A R.
\]
Letting $R = \frac{B_0}{A_0 - A} + 1$, this yields a contradiction. 
\end{proof}

\subsubsection{Construct singular cscK on blowups of singular KEs} \label{sect_examples}
Let $(X,\om)$ be a compact K\"ahler variety with log-terminal singularity. 
Suppose that $K_X$ is $m$-Cartier for some $m \in \BN^\ast$, and pick a hermitian metric $h$ on $mK_X$.
Assume that either
\begin{itemize}
    \item $K_X$ is ample and $\om = \frac{\ii}{m}\Ta(mK_X, h)$; or
    \item $K_X$ numerically trivial and $\frac{\ii}{m}\Ta(mK_X, h) = 0$; or else
    \item $K_X$ is anti-ample, $\om = -\frac{\ii}{m}\Ta(mK_X, h)$, and $(X, K_X)$ is K-stable.
\end{itemize}
Note that in the above cases, $\{\om\}$ contains a unique singular K\"ahler--Einstein metric and the Mabuchi function with respect to $\om$ is coercive. 
We further assume that $X$ admits a resolution of Fano type $\pi: \widehat{X} \to X$. 

\smallskip
Let $f: Y \to X$ be a blowup of $N$ distinct points in the smooth locus of $X$. 
Denote the irreducible components of the exceptional divisor of $f$ by $(D_i)_i$. 
Since $f$ is an isomorphism near the singularities of $Y$ and $X$, one can verify that $Y$ also admits a resolution of Fano type. 

\smallskip
For any $a := (a_1,\cdots,a_N) \in \BR_{>0}^n$, there exists a constant $\dt_a > 0$, such that for all $\dt \in (0,\dt_a)$, the class $\{f^\ast\om\} - \dt \sum_{i=1}^N a_i c_1(\CO(D_i))$ contains a K\"ahler metric $\om_{Y,a,\dt}$. 
By Lemma~\ref{lem:openness_coercivity}, the Mabuchi functional with respect to $f^\ast \om + \vep \om_{Y,a,\dt}$ is coercive for all sufficiently small $\vep$.  
Corollary~\ref{bigthm:cscK} then ensures the existence of a singular cscK metric in the class $(1+\vep)\{f^\ast\om\} - \vep\dt \sum_{i=1}^N a_i c_1(\CO(D_i))$ on $Y$. 

\smallskip
If $X$ is a K\"ahler variety with log terminal singularities, $\Aut(X)$ is discrete and it admits a crepant resolution $\pi: Y \to X$, it is not difficult to check that $\pi: Y \to X$ satisfies Condition~\ref{cond_A}. 
In the case of K\"ahler--Einstein varieties with log terminal singularities and discrete automorphism groups that admit crepant resolutions, the above construction provides a method to produce numerous singular cscK metrics on their blowups at points within the smooth locus.

\smallskip
In dimension two, all surfaces with canonical singularities admit crepant resolutions. 
In dimension three, the famous result of \cite[Thm. 1.2]{Bridgeland_King_Reid_2001} establishes that singular varieties locally modeled on $\BC^3/G$, where $G < \RSL(3,\BC)$ is finite, also admit crepant resolutions. 
Three-dimensional ODP singularity also admits a crepant resolution, and it is not a quotient singularity. 

\smallskip
We also extract the following example from \cite[Rmk.~34]{Szekelyhidi_2024} and \cite[Example 4.10]{Boucksom_Jonsson_Trusiani_2024} that is not a crepant resolution.

\begin{eg}[Isolated cone singularities] \label{eg_iso_sing}
Let $V$ be a {smooth} projective variety and let $L$ be an ample line bundle on $V$. 
Set $C_a(V, L):= \Spec \sum_{m \geq 0} H^0(V, L^m)$ the corresponding affine cone. 
Assuming $K_V\sim_{\BQ} r\cdot L$ for some $r\in \BQ$, by \cite[Lem.~3.1]{Kollar_2013}, $C_{a}(V, L)$ is klt if and only if $r<0$. 
Moreover, $C_{a}(V, L)$ is canonical if and only if $r \leq -1$.
Therefore, one can choose $r\in (-1, 0)$ to get a klt isolated singularity which is not canonical.

\smallskip
Assume that $X$ has klt isolated singularities, and each singular point is locally isomorphic to an affine cone $C_a(V,L)$ where $V$ is a Fano manifold.  
Then blowing up the singularities yields a resolution of singularities such that $\pi: Y\rightarrow X$ is projective.  In this case, $-K_Y = -\pi^* K_X-\sum_i a_i E_i$ is $\pi$-nef if and only if $X$ has canonical singularities, i.e. $a_i\geq 0$.  
When $-K_Y$ is $\pi$-nef, Condition~\ref{cond_A} holds by choosing $\rho_1=0$.

\smallskip
We now consider a slightly more general situation where $-K_Y$ is not $\pi$-nef, i.e. $r\in (-1, 0)$ in the setting above and explain Condition \ref{cond_A} in this setting.
Let $\mu_X$ be a $T$-invariant adapted measure on $X$ (cf. Definition \ref{def_adapted_measure}) and let $\mu_Y$ be a $T$-invariant volume form on $Y$. 
Recall that $\Ric(\mu_X)$ is defined in Section \ref{sect_singular_recall}, which is a smooth form on $X$, so $\Ric(\mu_X)\geq -K_1\omega$ for some $K_1>0$.  
Remark that for any $T$-invariant function $\psi_0 = -\sum_{i} a_i\log|s_i|^2_{h_i}$ for some smooth hermitian metric $h_i$ on $\mathcal{O}_X(E_i)$, we have $\sum_i a_i\Theta_{h_i}= \ddc \psi_0 +  \sum_i [E_i] \in c_1(K_Y - \pi^\ast K_X)$ since $K_Y = \pi^* K_X+\sum_i a_i E_i$. 
Therefore, by $\ddc$-lemma, $\ddc \psi_0 +  \sum_i [E_i] = \sum_i a_i \Ta_{h_i} = \pi^\ast \Ric(\mu_X)-\Ric(\mu_Y)+ \ddc f$ for some $f\in C^\infty(Y)$; hence 
\begin{equation}\label{eq_ric_pi_ric}
    \Ric(\mu_Y)+ dd^c \psi = \pi^\ast \Ric(\mu_X) - \sum_i a_i  [E_i],
\end{equation}
where $\psi:= - f+\psi_0=-f -\sum_{i} a_i\log|s_i|^2_{h_i}$ for some smooth hermitian metric $h_i$ on $\mathcal{O}_X(E_i)$. 
Since $X$ has only klt isolated cone singularities, we have a resolution by a single blow-up at each isolated point; hence, $E_i$'s are disjoint, so there exists $K_2>0$ such that (cf. the construction in \cite[Lem.~2.2]{Coman_Ma_Marinescu_2017})
\begin{equation}
  K_2\{\pi^\ast \omega\} -\sum_{a_i>0}a_i c_1(\mathcal{O}_X(E_i)) \geq 0.   
\end{equation}
Therefore, there exist a smooth hermitian metrics $h'_i$ on $\mathcal{O}_X(E_i)$, such that 
  \begin{equation}\label{eq_K_1}
    - \sum_{a_i>0 } a_i [E_i] + \sum_{a_i>0 } dd^c a_i\log|s_i|^2_{h'_i}    \geq - K_2\pi^\ast \omega.
\end{equation}
Using \eqref{eq_ric_pi_ric}, \eqref{eq_K_1} and the fact that $\Ric(\mu_X)\geq -K_1\omega$ on $X$, we get
\begin{align*}
    \Ric(\mu_Y)+ dd^c (\psi+ \sum_{a_i>0}a_i\log|s_i|^2_{h'_i}) &=   \pi^\ast \Ric(\mu_X) - \sum_i a_i  [E_i]+ dd^c\sum_{a_i>0}a_i\log|s_i|^2_{h'_i}  \\
   &= \pi^\ast \Ric(\mu_X) - \sum_{a_i<0} a_i  [E_i]  + ( - \sum_{a_i>0 } a_i [E_i]   +  \sum_{a_i>0 } dd^c a_i\log|s_i|^2_{h'_i} ) \\
    & \geq -K_1 \pi^*\omega -K_2 \pi^*\omega =: -K\pi^*\omega.
\end{align*} 
Therefore $ \Ric(\mu_Y)+ \ddc \phi\geq -K\pi^*\omega$ where $\phi:=\psi+ \sum_{a_i>0}a_i\log|s_i|^2_{h_i'}= u- \sum_{ a_j<0}a_j\log|s_j|^2_{h_j}$ for some $u \in \CC^\infty(Y)$.
One can obtain $\int_X e^{-\phi} \mu_Y^n < +\infty$ as $a_j>-1$. 
Hence, Condition \ref{cond_A} holds by taking $\rho_1 = \phi/K$.
\end{eg}

\subsubsection{Mixing construction with smoothing}
Let us stress that this construction also works on the $\BQ$-Gorenstein smoothable setting. 
Consider a $\BQ$-Gorenstein smoothing $f: (\CX,\om_\CX) \to \BD$ of $(X_0,{\om_{\CX}}_{|X_0})$ where $X_0$ is a K\"ahler--Einstein variety with log terminal singularities and $\Aut(X)$ is discrete, and $\{{\om_{\CX}}_{|X_0}\}$ is a K\"ahler--Einstein class. 
Denote by $\CZ$ the singular set of $f$. 
Take a finite set of points $\{p_1, \cdots, p_N\}$ in $X_0^\reg$. 
There exists smooth curves $C_1, \cdots, C_N$ in $\CX$ such that these curves are disjoint, each $C_i$ intersects $X_0$ transversely at the single point $p_i$, $C_i \subset \CX \setminus \CZ$, and the restriction $f_{|C_i}: C_i \to \BD$ is an isomorphism. 
Now consider the blowup map $\mu: \CY \to \CX$ along all the curves $(C_i)_i$ and let $Y_0 = \Bl_{\{p_1,\cdots,p_N\}}X$. 
Then $\pi = f \circ \mu: \CY \to \BD$ is a $\BQ$-Gorenstein smoothing of $(Y_0, {\om_{\CY,\vep}}_{|Y_0})$ where $\om_{\CY,\vep} = \mu^\ast \om_{\CX} + \vep \om_\CY$. 
Here $\om_\CY$ is a hermitian metric on $\CY$ and relatively K\"ahler that defined by
\[
    \om_\CY = \mu^\ast \om_\CX - \sum_{i = 1}^N a_i \Ta_{h_i}(\CO(E_i))
\]
where $E_i$ is the exceptional divisor over $C_i$, $a_i > 0$ and $h_i$ is some hermitian metric on $\CO(E_i)$. 
Since the Mabuchi functional $\M_{{\om_\CX}_{|X_0}}$ on $X_0$ is coercive, it follows from Lemma~\ref{lem:openness_coercivity} $\M_{{\om_{\CY,\vep}}_{|Y_0}}$ on $Y_0$ as well. 
By \cite[Thm.~C]{Pan_To_Trusiani_2023}, this implies the existence of a cscK metric in the class $\{{\om_{\CY,\vep}}|Y_0\}$. 

\smallskip
For examples of smoothable Calabi--Yau varieties, we refer the reader to \cite[Sec.~8]{Druel_Guenancia_2018}. 
For the smoothable Fano case, \cite{Liu_Xu_2019, Liu_2022} prove that mildly singular cubic varieties in dimensions three and four are K-stable. 
Additionally, explicit examples of K-stable singular cubic threefolds can be found, for instance, in \cite[Sec.~3, 4]{Cheltsov_Tschinkel_Zhang_2024_equivariant_1} and \cite[Sec.~5]{Cheltsov_Tschinkel_Zhang_2024_equivariant_2}.

\appendix
\section{Weighted Aubin--Yau inequality}\label{sec:appx_A}
In this section, for the reader's convenience,  we provide detailed proof of a Laplacian inequality (see also \cite[Lem.~5.6]{DiNezza_Jubert_Lahdili_2024}), which generalizes \cite[p. 98--99]{Siu_book} to the weighted setting.

\begin{lem}\label{lem:weighted_AY} 
Let $\om$ and $\om_X$ be two $T$-invariant K\"ahler metrics. 
Assume that $\vph \in \CH^T_\om$ satisfies
\[
    v(m_\vph)(\omega + dd^c \vph)^n = e^{F} \omega_X^n.
\]
Then there exist positive constants $\CB_0, \CC_0$ such that 
\begin{align*}
    \Dt_{\om_\vph, v} \log \tr_{\om_X} \om_\vph 
    &\geq \frac{\Delta_{\omega_X} F}{\tr_{\om_X} \om_\vph} - \CB_0 \tr_{\om_\vph}\om_X - \CC_0
    - \frac{1}{\tr_{\om_X} \om_\vph}
    \sum_{\af,\bt} (\log v)_{\af\bt}(m_\vph)  
    \langle d m^{\xi_\af}_\vph, d m^{\xi_\bt}_\vph \rangle_{\om_X}. 
\end{align*} 
In particular, if $\mathfrak{t}^\vee \ni p \mapsto \log v(p) \in \BR$ is concave, then 
\[
    \Dt_{\om_\vph, v} \log \tr_{\om_X} \om_\vph 
    \geq \frac{\Delta_{\omega_X} F}{\tr_{\om_X} \om_\vph} - \CB_0 \tr_{\om_\vph}\om_X - \CC_0.
\] 
We note that the constants $\CB_0 = B + \max_X |\Scal(\omega_X)|$ and $\CC_0 = C_v^2 (C_2 + C_v^2 C_\xi)$ where
\begin{itemize}
    \item $\Bisec(\om_X) \geq -B$ is a negative lower bound for the bisectional curvature of $\om_X$; 
    
    \item $C_v > 0$ is a constant such that
    \begin{equation}\label{eq:appx_v_bdd}
    \begin{split}
        C_v^{-1} 
        &\leq v + \sum_\af |v_\af| + \sum_{\af,\bt} |v_{\af\bt}| 
        \leq C_v \quad{\text{on $P = \im (m_\om)$}},\\
        C_v^{-1} 
        &\leq |v| + \sum_\af |v_\af| + \sum_{\af,\bt} |v_{\af\bt}| 
        \leq C_v \quad{\text{on $P_{\om_X} = \im (m_{\om_X})$}};
    \end{split}    
    \end{equation}

    \item $C_2$ depending only on $n$ and $C_1$ where $C_1 > 0$ is a constant so that for any $\af$,
    \begin{equation}\label{eq:appx_L_om_X}
        -C_1 \om_X \leq \CL_{J \xi_\af} \om_X \leq C_1 \om_X.
    \end{equation}
\end{itemize}
\end{lem}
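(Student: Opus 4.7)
The plan is to derive the weighted inequality by combining Siu's classical Aubin--Yau inequality with explicit chain-rule expansions generated by the weight $v$. Taking $\log$ in the weighted Monge--Amp\`ere equation gives
\[
    \log\det g_{\vph, i\bar j} = F - \log v(m_\vph) + \log\det g_{X, i\bar j}.
\]
Applying $\Dt_{\om_X}$ to this identity and inserting it into the classical Aubin--Yau (Siu) inequality
\[
    \Dt_{\om_\vph}\log\tr_{\om_X}\om_\vph \geq \frac{\Dt_{\om_X}\log\det g_{\vph, i\bar j}}{\tr_{\om_X}\om_\vph} - B\tr_{\om_\vph}\om_X
\]
yields a preliminary estimate whose right-hand side features $\Dt_{\om_X} F - \Dt_{\om_X} \log v(m_\vph) - \tr_{\om_X}\Ric(\om_X)$.

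The main computation is then the chain-rule expansion
\[
    \Dt_{\om_X}\log v(m_\vph) = \sum_\af \frac{v_\af(m_\vph)}{v(m_\vph)}\Dt_{\om_X} m^{\xi_\af}_\vph + \sum_{\af,\bt}\Bigl(\frac{v_{\af\bt}}{v} - \frac{v_\af v_\bt}{v^2}\Bigr)(m_\vph)\,\langle dm^{\xi_\af}_\vph, dm^{\xi_\bt}_\vph\rangle_{\om_X}.
\]
With the sign reversed, the second sum is precisely the quadratic form appearing in the statement. The first sum must be merged with $\Dt_{\om_X} F$ to produce $\Dt_{\om_X, v} F$, and simultaneously the left-hand side must be converted from $\Dt_{\om_\vph}$ to $\Dt_{\om_\vph, v}$. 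Both conversions introduce first-order errors, which I control using the identity $dd^c m^\xi_{\om'} = -\CL_{J\xi}\om'$ (valid for any $T$-invariant K\"ahler form $\om'$): this replaces moment-map Laplacians by Lie derivatives, bounded for $\om' = \om_X$ by $C_1$ via \eqref{eq:appx_L_om_X}. The evaluation mismatch between $v_\af(m_\vph)$ and $v_\af(m_{\om_X})$ is handled by a Taylor comparison, controlled by \eqref{eq:appx_v_bdd} since both moment maps lie in the compact polytope $P$.

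The remaining terms are absorbed as follows. The scalar-curvature contribution $\tr_{\om_X}\Ric(\om_X)/\tr_{\om_X}\om_\vph$ is handled using the Cauchy--Schwarz bound $n^2 \leq \tr_{\om_X}\om_\vph \cdot \tr_{\om_\vph}\om_X$ together with $|m^{\xi_\af}_{\Ric(\om_X)}|\leq C_{\Ric}$ and the $C_v$ bound on $(\log v)'$; this yields the $2 C_v^2 C_{\Ric} \tr_{\om_\vph}\om_X$ summand of $\CB_0$, while the $B\tr_{\om_\vph}\om_X$ summand comes directly from Siu. The constant $\CC_0$ collects the $\CL_{J\xi_\af}\om_X$ contributions (giving $C_v^2 C_2$, with $C_2$ depending on $n$ and $C_1$) and the $|\xi_\af|^2_{\om_X}\leq C_\xi$ terms that arise from Cauchy--Schwarz applied to moment-map gradients (giving $C_v^4 C_\xi$). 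The final assertion is then immediate: when $\log v$ is concave, $\tfrac{v_\af v_\bt}{v^2} - \tfrac{v_{\af\bt}}{v} = -\partial^2_{\af\bt}\log v$ is positive semi-definite, so the quadratic form is $\geq 0$ and may be discarded.

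The main obstacle is the bookkeeping in the second paragraph: converting the first-order sum $\sum_\af \frac{v_\af(m_\vph)}{v(m_\vph)}\Dt_{\om_X} m^{\xi_\af}_\vph$, together with the analogous left-hand-side correction coming from $\Dt_{\om_\vph, v} - \Dt_{\om_\vph}$, into the clean form $\Dt_{\om_X, v} F$ modulo errors of the advertised type. This requires careful tracking of the mismatch between evaluation of $v_\af$ at $m_\vph$ versus at $m_{\om_X}$, and using the $T$-invariance of $\om$, $\om_X$, and $\vph$ to handle all Lie-derivative terms. Once this is arranged, the remainder of the proof is a matter of collecting constants into $\CB_0$ and $\CC_0$.
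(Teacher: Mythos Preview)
Your overall architecture—classical Aubin--Yau, chain-rule expansion of $\Dt_{\om_X}\log v(m_\vph)$, then conversion to weighted Laplacians on both sides—matches the paper's. The quadratic form indeed arises exactly from your chain rule. But the paragraph on bookkeeping contains a genuine gap.

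The first-order sum $\sum_\af(\log v)_\af(m_\vph)\,\Dt_{\om_X} m_\vph^{\xi_\af}$ cannot be ``merged with $\Dt_{\om_X}F$ to produce $\Dt_{\om_X,v}F$'': the correction $\Dt_{\om_X,v}F-\Dt_{\om_X}F=\langle(\log v)'(m_{\om_X}),m_{dd^cF}\rangle$ is a first-order expression in $F$, whereas your sum is second-order in $m_\vph$; they are unrelated. Nor can the sum be bounded as an error via your identity $dd^c m_{\om'}^{\xi}=-\CL_{J\xi}\om'$: here $\om'=\om_\vph$, not $\om_X$, so $\Dt_{\om_X}m_\vph^{\xi_\af}=-\tr_{\om_X}\CL_{J\xi_\af}\om_\vph$ involves third derivatives of $\vph$ and is not controlled by $C_1$ (which bounds only $\CL_{J\xi_\af}\om_X$).

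What the paper actually does is recognize that the \emph{left-hand} correction $\tr_{\om_\vph}(d\log v(m_\vph)\wedge d^c\tr_{\om_X}\om_\vph)$, expanded via the Lie-derivative formula for $\CL_{J\xi_\af}\tr_{\om_X}\om_\vph$, equals precisely $+\sum_\af(\log v)_\af(m_\vph)\,\Dt_{\om_X}m_\vph^{\xi_\af}$ up to errors that involve only $\CL_{J\xi_\af}\om_X$ (these \emph{are} bounded by $C_1$, giving $C_2$). This cancels the $-\sum_\af(\log v)_\af(m_\vph)\,\Dt_{\om_X}m_\vph^{\xi_\af}$ from your chain rule, leaving only the quadratic form. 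The conversion $\Dt_{\om_X}\to\Dt_{\om_X,v}$ on the right is then handled separately by applying it to $F-\log v(m_\vph)=\log(\om_\vph^n/\om_X^n)$: the correction is $\langle(\log v)'(m_{\om_X}),m_{\Ric(\om_X)}-m_{\Ric(\om_\vph)}\rangle$, bounded by $2C_v^2 C_{\Ric}$—this, not any scalar-curvature term, is where $C_{\Ric}$ enters. A residual cross term $\tr_{\om_X}(d\log v(m_{\om_X})\wedge d^c\log v(m_\vph))$ is bounded by $C_v^4 C_\xi\tr_{\om_X}\om_\vph$ via $|\xi_\af|_{\om_X}^2\le C_\xi$, which is where $C_\xi$ enters. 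The key structural point you are missing is that the first-order sum is not an error to be absorbed but the term that the left-hand weighted correction produces and cancels.
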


\begin{rmk}
The concavity condition on $\log v(p)$ holds in many interesting cases. 
Here we extract some examples from \cite[Sec.~3]{Lahdili_2019}: 
\begin{itemize}
    \item cscK and extremal metrics: $v(p) = 1$,
    \item K\"ahler--Ricci solitons: $v(p)= e^{\langle \xi, p \rangle}$ for some fixed $\xi \in \frak{t}$,
    \item K\"ahler metrics given by the generalized Calabi construction: $v(p)=\Pi_j (\langle \xi_j, p \rangle + a_j)^{d_j} $ with $d_j>0$. 
\end{itemize}
\end{rmk}

\begin{proof}
Before entering the proof, we recall a basic equality of Lie derivative about the quotient of volume forms. 
Suppose that $\af$ is an $(n,n)$-form and $\bt$ is a volume form on $X$. 
For all vector fields $V$, one can derive the following
\begin{equation*}
    \CL_V \lt(\frac{\af}{\bt}\rt) = \frac{\CL_V \af}{\bt} - \frac{\af}{\bt} \cdot \frac{\CL_V \bt}{\bt}.
\end{equation*}
As a consequence, we have
{\small
\begin{equation}\label{eq:appx_L_quot_vol}
    \CL_V \lt(\frac{\om_\vph^n}{\om_X^n} \rt)
    = \frac{n (\CL_V \om_\vph) \w \om_\vph^{n-1}}{\om_\vph^n} - \frac{n (\CL_V \om_X) \w \om_X^{n-1}}{\om_X^n}\cdot \frac{\om_\vph^n}{\om_X^n}
\end{equation}
and 
\begin{equation}\label{eq:appx_L_tr}
    \CL_V \lt(\tr_{\om_X} \om_\vph\rt) 
    = \frac{n (\CL_V \om_\vph) \w \om_X^{n-1} + n(n-1) (\CL_V \om_X) \w \om_\vph \w \om_X^{n-2}}{\om_X^n} - (\tr_{\om_X} \om_\vph) \frac{n (\CL_V \om_X) \w \om_X^{n-1}}{\om_X^n}
\end{equation}
}
\smallskip
By the standard Aubin--Yau's inequality, we have
\begin{align*}
   \Dt_{\om_\vph} \log \tr_{\om_X} \om_\vph 
    &\geq - \frac{\tr_{\om_X} \Ric(\omega_\vph ) }{\tr_{\om_X} \om_\vph} - B \tr_{\om_\vph}\omega_X
    \\
    & =\frac{\Dt_{\om_X} (F - \log v(m_\vph))}{\tr_{\om_X} \om_\vph} - \frac{\tr_{\om_X} (\Ric(\omega_X))}{\tr_{\om_X} \om_\vph}   - B \tr_{\om_\vph} \om_X\\
    &\geq \frac{\Dt_{\om_X} (F - \log v(m_\vph))}{\tr_{\om_X} \om_\vph}   - B' \tr_{\om_\vph} \om_X,
\end{align*}
where $B' = \max_X |\Scal(\omega_X)| + B $,  and $ (\tr_{\om_\vph}\omega_X) (\tr_{\omega_X}\omega_{\vph}) \geq 1$,
and thus, 
\begin{equation}\label{eq:appx_AY_1}
\begin{split}
    &\Dt_{\om_\vph, v} \log \tr_{\om_X} \om_\vph\\ 
    &\geq \frac{\Dt_{\om_X} (F - \log v(m_\vph))}{\tr_{\om_X} \om_\vph} - B' \tr_{\om_\vph} \om_X +\tr_{\om_\vph} [d \log v(m_\vph) \w d^c \log \tr_{\om_X} \om_\vph]\\
    &= \frac{\Dt_{\om_X} F}{\tr_{\om_X} \om_\vph} - B' \tr_{\om_\vph} \om_X
    + \frac{1}{\tr_{\om_X} \om_\vph} \bigg\{\tr_{\om_\vph} [d \log v(m_\vph) \w d^c \tr_{\om_X} \om_\vph] - \Dt_{\om_X} \log v(m_\vph)\bigg\}.
\end{split}
\end{equation}

\smallskip
It suffices to establish a suitable lower bound for the terms in $\{\cdots\}$ in \eqref{eq:appx_AY_1}.
Note that $P$ does not depend on $\vph$ and $d^c = J^{-1} \circ d \circ J$.  
By Cartan's formula, $i_V d^c f = - i_{JV} df = - \CL_{JV} f$.
Using \eqref{eq:appx_L_tr}, \eqref{eq:appx_L_om_X} and \eqref{eq:appx_v_bdd}, we obtain the following:
{\small
\begin{equation}\label{eq:appx_tr_dlog_dtr}
\begin{split}
    &\tr_{\om_\vph} (d \log v(m_\vph) \w d^c \tr_{\om_X} \om_\vph)
    = \sum_\af (\log v)_\af(m_\vph) i_{\xi_\af} d^c \tr_{\om_X} \om_\vph 
    = - \sum_\af (\log v)_\af(m_\vph) (\CL_{J\xi_\af} \tr_{\om_X} \om_\vph)\\
    &= - \sum_\af (\log v)_\af(m_\vph) \lt(\frac{n(\CL_{J\xi_\af}\om_\vph) \w \om_X^{n-1} + n(n-1) (\CL_{J \xi_\af} \om_X) \w \om_\vph \w \om_X^{n-2}}{\om_X^n}
    - \tr_{\om_X} \om_\vph \cdot \frac{n (\CL_{J \xi_\af} \om_X) \w \om_X^{n-1}}{\om_X^n}\rt)\\
    &\geq 
    - \sum_\af (\log v)_\af(m_\vph) \lt(\frac{n(\CL_{J\xi_\af}\om_\vph) \w \om_X^{n-1}}{\om_X^n}\rt) - C_2 C_v^2 \tr_{\om_X} \om_\vph \\
    &= \sum_\af (\log v)_\af(m_\vph) \Dt_{\om_X} m_\vph^{\xi_\af} - C_2 C_v^2 \tr_{\om_X} \om_\vph \\
    &= \frac{1}{v(m_\vph)} \lt(\Dt_{\om_X} v(m_\vph) - \sum_{\af,\bt} v_{\af\bt}(m_\vph) \langle d m_\vph^{\xi_\af}, d m_\vph^{\xi_\bt} \rangle_{\om_X}\rt) - C_2 C_v^2 \tr_{\om_X} \om_\vph \\
    &= \Dt_{\om_X} \log v(m_\vph) 
    -\sum_{\af,\bt} 
    (\log v)_{\af\bt}(m_\vph)
    \langle d m_\vph^{\xi_\af}, d m_\vph^{\xi_\bt} \rangle_{\om_X} - C_2 C_v^2 \tr_{\om_X} \om_\vph 
\end{split}
\end{equation}
}%
where $C_2 > 0$ is a constant depending only on $C_1$ and $n$.
To conclude, combining \eqref{eq:appx_AY_1} and \eqref{eq:appx_tr_dlog_dtr}, we finally obtain
{%\small
\begin{align*}
    &\Dt_{\om_\vph,v} \log \tr_{\om_X} \om_\vph \geq \frac{\Dt_{\om_X} F}{\tr_{\om_X}\om_\vph} - B' \tr_{\om_\vph} \om_X  - C_2C_v^2 
    - \frac{1}{\tr_{\om_X} \om_\vph}
    \sum_{\af,\bt} 
    (\log v)_{\af\bt}(m_\vph) 
    \langle d m^{\xi_\af}_\vph, d m^{\xi_\bt}_\vph \rangle_{\om_X}
\end{align*}
}%
as required.
\end{proof}

\section{Weighted local Chen--Cheng's $\CC^2$-estimate} \label{sect_local_chen_cheng}
This section aims to give details for the $\CC^2$-estimate in Theorem~\ref{thm_local_higher_est}.
The proof follows a similar argument in \cite[Prop.~4.1]{Chen_Cheng_2021_1} with further analysis of the weighted terms. Recall that we have two local equations on $B_1(0)\subset \BC^n$,
\[
    v(m_{dd^c \phi}) \det (\phi_{i\bar{j}}) = e^G 
    \quad\text{and}\quad
    \Dt_{\phi,v} G = -S = -w(m_{dd^c \phi}).
\]
Here we denote $\Delta f := \sum_{k=1}^n \partial_{k} \partial_{\bar k} f$, $\Dt_{\phi,v} f:= \sum_{i,j=1}^n \phi^{i\bar j}  \partial_i \partial_{\bar j} f+ \frac{1}{2} \langle \dd \log v(m_{\ddc \phi}), \dd f \rangle_{\ddc \phi}$ and 
\begin{equation}\label{eq_grdient_prod}
    \tr_\phi(df_1 \w d^c f_2) = \frac{n (\ii \pl f_1 \w \db f_2 + \ii \pl f_2 \w \db f_1)\w (\ddc \phi)^{n-1}}{(\ddc \phi)^n} = \sum_{i,j} \phi^{i\bar{j}} \Re((f_1)_i (f_2)_{\bar{j}}),
\end{equation}
which are different from the scaling we used before. 
We first prove the following lemma:
\begin{lem}\label{lem:appxB}
There exist positive constants $K,C$ depending on $v,w, \|G\|_{L^\infty(B_1(0))}$ and $\|\phi\|_{L^\infty(B_1(0))}$ such that 
\[
    \Delta_{\phi, v}u\geq -C \cdot (\Delta\phi) \cdot u \quad\text{on $B_1(0)$}
\]
where $u = e^{\frac{G}{2}} |d G|_\phi^2 + K \Dt \phi$.
\end{lem}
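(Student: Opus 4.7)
The plan is to evaluate $\Dt_{\phi,v}$ on each summand of $u = h|dG|_\phi^2 + K\Dt\phi$ with $h := e^{G/2}$, and to show that the dangerous terms either cancel using the two given equations or can be absorbed by $C\Dt\phi$ after choosing $K$ large. This is the local weighted analogue of the Chen--Cheng auxiliary function argument; we follow the framework of \cite{DiNezza_Jubert_Lahdili_2024, Han_Liu_2024}, specialized to the Euclidean (flat) setting. Throughout, all weighted correction terms of the form $\tr_\phi(d\log v(m_\phi)\w \dc (\cdot))$ produced by the difference $\Dt_{\phi,v}-\Dt_\phi$, as well as their derivatives, are bounded uniformly using $C_v, C_w$ and the pointwise bounds on the derivatives of $v,w$, together with the fact that the moment polytope of $\ddc\phi$ is uniformly controlled in terms of $\|\phi\|_{L^\infty}$.

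For the first summand, a weighted Bochner identity combined with $\Ric(\om_\phi) = -\ddc G + \ddc\log v(m_\phi)$ (obtained by applying $\ddc\log$ to the Monge--Amp\`ere equation) yields
\[
    \Dt_\phi |dG|_\phi^2 \geq 2\mathrm{Re}\langle dG, d(\Dt_\phi G)\rangle_\phi + |\nabla dG|_\phi^2 + |\db dG|_\phi^2 - C|dG|_\phi^2\bigl(|dG|_\phi^2+1\bigr).
\]
Differentiating the weighted cscK equation $\Dt_{\phi,v} G = -w(m_{\ddc\phi})$ controls $\langle dG, d(\Dt_\phi G)\rangle_\phi$ by $\oh|\nabla dG|_\phi^2 + C(|dG|_\phi^4+1)$ after Young's inequality. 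For the second summand, the concavity of $\log v$ and $\Bisec(g_{\mathrm{eucl}})=0$ permit Lemma~\ref{lem:weighted_AY}, applied to the pair $(\om_\phi, g_{\mathrm{eucl}})$, to give
\[
    \Dt_{\phi,v}\log\Dt\phi \geq \frac{\Dt_{\mathrm{eucl},v} G}{\Dt\phi} - \CC_0,
\]
so that multiplying by $\Dt\phi$ and bounding $\Dt_{\mathrm{eucl},v} G$ in terms of $w(m_{\ddc\phi})\Dt\phi$ (using $\Dt_{\phi,v} G = -w$ together with $\Dt_{\mathrm{eucl}} G \leq \Dt\phi \cdot \Dt_\phi G$ up to weighted corrections) produces $\Dt_{\phi,v}\Dt\phi \geq -C\Dt\phi(1+|dG|_\phi^2)$.

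Assembling the three computations, the cross term $2\mathrm{Re}\langle dh, d|dG|_\phi^2\rangle_\phi = h\,\mathrm{Re}\langle dG, d|dG|_\phi^2\rangle_\phi$ combines with the good Bochner term $h|\nabla dG|_\phi^2$ via Cauchy--Schwarz to absorb the worst residual contribution $Ch|dG|_\phi^4\Dt\phi$ coming from the Ricci correction; this is precisely the role of the multiplier $e^{G/2}$, where $\|G\|_{L^\infty}$ enters to keep $h$ uniformly bounded above and below. Choosing $K$ sufficiently large depending on $v, w, \|G\|_{L^\infty}$ and $\|\phi\|_{L^\infty}$, every remaining term is dominated by $C\Dt\phi\cdot u$, yielding the claimed inequality. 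The main obstacle is the bookkeeping of the moment-map derivative terms generated by the weighted Laplacians and by $\ddc\log v(m_\phi)$ in the Ricci correction (absent in the classical Chen--Cheng setup); these contributions are tamed uniformly by the pointwise bounds on $v, w$ and their first and second derivatives on the compact moment polytope, together with the vanishing of $\mathcal{L}_{J\xi_\af} g_{\mathrm{eucl}}$ in the flat background, which eliminates several otherwise problematic terms.
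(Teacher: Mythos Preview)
Your outline has the right architecture but contains a genuine gap in the treatment of the second summand, and it misidentifies the mechanism behind both the factor $e^{G/2}$ and the constant $K$.

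The claim that $\Delta_{\phi,v}\Delta\phi \geq -C\,\Delta\phi\,(1+|dG|_\phi^2)$ follows from Lemma~\ref{lem:weighted_AY} together with ``$\Delta_{\mathrm{eucl}}G \leq \Delta\phi\cdot\Delta_\phi G$'' does not work. That last inequality is simply false: in coordinates diagonalizing $\phi_{i\bar j}$ one has $\Delta G=\sum_i G_{i\bar i}$ and $\Delta_\phi G=\sum_i\lambda_i^{-1}G_{i\bar i}$, and since the $G_{i\bar i}$ have no sign there is no pointwise comparison. The equation $\Delta_{\phi,v}G=-w(m_{dd^c\phi})$ gives you nothing about $\Delta_{\mathrm{eucl}}G$. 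In the paper the raw term $-K\Delta G$ coming from $\Delta_{\phi,v}(K\Delta\phi)=-K\Delta G+\cdots$ is \emph{not} bounded on its own; it is killed by the good Hessian term retained from the first summand via
\[
e^{G/2}|\nabla^\phi\overline{\nabla^\phi}G|_\phi^2+\tfrac{K^2}{4}(\Delta\phi)^2\;\geq\;K|\Delta G|,
\]
a Cauchy--Schwarz step that is the actual reason for introducing $K$. Your proposal discards this good term too early (spending it to absorb a putative $C h|dG|_\phi^4\Delta\phi$ that never appears) and then has nothing left to control $-K\Delta G$.

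Relatedly, the role of $e^{G/2}$ is not to ``absorb $Ch|dG|_\phi^4\Delta\phi$ via Cauchy--Schwarz''. Its purpose is that the precise coefficient $\tfrac12$ in the exponent makes the combination $\tfrac14|\nabla^\phi G|_\phi^4+\tfrac12\phi^{i\bar i}\phi^{j\bar j}(G_iG_jG_{\bar i\bar j}+G_{\bar i}G_{\bar j}G_{ij})+|\nabla^\phi\nabla^\phi G|_\phi^2$ a perfect square, so the quartic $|dG|_\phi^4$ terms disappear exactly, leaving $|\nabla^\phi\overline{\nabla^\phi}G|_\phi^2$ available for the step above. Finally, the assertion that $\mathcal{L}_{J\xi_\alpha}g_{\mathrm{eucl}}=0$ is unjustified in this local picture; the paper only uses a two-sided bound $-C\omega\leq\mathcal{L}_{\xi_\alpha}\omega\leq C\omega$, which contributes an extra $-C_3\Delta\phi$ rather than vanishing.
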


\begin{proof}
We first remark that by arithmetic and geometric means inequality, one has {\small
\[
    \Dt \phi \geq n\det(\phi_{i\bar{j}})^{1/n} = n e^{G/n} v(m_{dd^c \phi})^{-1/n} \geq c
    \quad\text{and}\quad
    \tr_\phi \om \geq n \det(\phi_{i\bar{j}})^{-1/n}
    = n e^{-G/n} v(m_{dd^c \phi})^{1/n} \geq c.
\] }
where $\om$ is the Euclidean metric and $c > 0$ is a constant depending only on $n$, $\|G\|_{L^\infty(B_1(0))}$ and $C_v$.
We next review the following estimate by Chen and Cheng \cite[p.~16, (4.3)]{Chen_Cheng_2021_1}.  
Under the normal coordinates with respect to $\phi_{i\bar{j}}$, we may assume that $\om_{i\bar{j}}(x_0) = \dt_{ij}$, $\phi_{i\bar{j}}(x_0) = \ld_i \dt_{ij}$ and $\pl_k \phi_{i\bar{j}}(x_0) = 0$ at a fixed point $x_0$, then, at $x_0$, we have 
{\small
\begin{equation}\label{eq:appxB_CC}
\begin{split}
    e^{-\frac{G}{2}} \Dt_{\phi} (e^{\frac{G}{2}} |d G|_\phi^2)
    &= \Dt_{\phi} |\nabla^\phi G|_\phi^2 + \frac{1}{2} \phi^{i\bar{i}}(\pl_i G \cdot \pl_{\bar{i}} |\nabla^\phi G|_\phi^2 + \pl_{\bar{i}} G \cdot \pl_i |\nabla^\phi G|_\phi^2) + \frac{1}{4} |\nabla^\phi G|_\phi^4 + \frac{1}{2} \Dt_\phi G \cdot |\nabla^\phi G|_\phi^2 \\
    &= \phi^{i\bar{i}} (\pl_i \Dt_\phi G \cdot \pl_{\bar{i}} G + \pl_{\bar{i}} \Dt_\phi G \cdot \pl_i G) + \frac{1}{2}\Ric_{\phi}(\nabla^\phi G, \nabla^\phi G) + |\nabla^\phi \nabla^\phi G|_{\phi}^2 + |\nabla^\phi \overline{\nabla^\phi} G|_{\phi}^2 \\
    &\quad + \frac{1}{2} \phi^{i\bar{i}} \phi^{j\bar{j}} ( G_i G_j G_{\bar{i}\bar{j}} + G_i G_{\bar{j}} G_{j \bar{i}} + G_{\bar{i}} G_{\bar{j}} G_{ij} + G_{\bar{i}} G_j G_{i \bar{j}}) + \frac{1}{4} |\nabla^\phi G|_\phi^4 + \frac{1}{2} \Dt_\phi G \cdot |\nabla^\phi G|_\phi^2\\
    &\geq \phi^{i\bar{i}} (\pl_i \Dt_\phi G \cdot \pl_{\bar{i}} G + \pl_{\bar{i}} \Dt_\phi G \cdot \pl_i G) + \frac{1}{2}\Ric_{\phi}(\nabla^\phi G, \nabla^\phi G) + |\nabla^\phi \overline{\nabla^\phi} G|_{\phi}^2 \\ 
    &\quad + \frac{1}{2} \phi^{i\bar{i}} \phi^{j\bar{j}} (G_i G_{\bar{j}} G_{j \bar{i}} + G_{\bar{i}} G_j G_{i \bar{j}}) + \frac{1}{2} \Dt_\phi G \cdot |\nabla^\phi G|_\phi^2
\end{split}
\end{equation}
}%
where $\Ric_\phi := \Ric(dd^c \phi) = - dd^c \log \det(\phi_{i\bar{j}})$.
The last inequality comes from the fact that
{\small
\[
    \frac{1}{4} |\nabla^\phi G|_\phi^4 + \frac{1}{2} \phi^{i\bar{i}} \phi^{j\bar{j}} (G_i G_j G_{\bar{i}\bar{j}} + G_{\bar{i}} G_{\bar{j}} G_{ij}) + |\nabla^\phi \nabla^\phi G|_\phi^2 
    = \lt|G_{ij} + \frac{1}{2} \sqrt{\ld_i^{-1} \ld_j^{-1}} G_i G_j\rt|_\phi^2 \geq 0.
\] }
From the first equation, we have $\Ric(dd^c \phi) = - dd^c (G-\log v(m_{dd^c \phi}))$.
Rewrite \eqref{eq:appxB_CC} as follows
{\small
\begin{align*}
    e^{-\frac{G}{2}} \Dt_{\phi} (e^{\frac{G}{2}} |d G|_\phi^2)
    &\geq \phi^{i\bar{i}} (\pl_i \Dt_\phi G \cdot \pl_{\bar{i}} G + \pl_{\bar{i}} \Dt_\phi G \cdot \pl_i G) 
    -  \phi^{i\bar{i}} \phi^{j\bar{j}} G_{i\bar{j}} G_i G_{\bar{j}}\\
    &\quad + \phi^{i\bar{i}} \phi^{j\bar{j}} (\log v(m_{dd^c \phi}))_{i\bar{j}} G_i G_{\bar{j}} + \phi^{i\bar{i}} \phi^{j\bar{j}} G_{i\bar{j}} G_i G_{\bar{j}} + |\nabla^\phi \overline{\nabla^\phi} G|_{\phi}^2 + \frac{1}{2} \Dt_\phi G \cdot |d G|_\phi^2\\
    &= 2\tr_\phi (d \Dt_\phi G \w d^c G) 
    + \phi^{i\bar{i}} \phi^{j\bar{j}} (\log v(m_{dd^c \phi}))_{i\bar{j}} G_i G_{\bar{j}} + |\nabla^\phi \overline{\nabla^\phi} G|_{\phi}^2 + \frac{1}{2} \Dt_\phi G \cdot |d G|_\phi^2.
\end{align*}
}%
Considering the weighted version of the above inequality, one can infer 
{\small
\begin{equation}\label{appxB_first_term_1}
\begin{split}
    e^{-\frac{G}{2}} \Dt_{\phi,v} (e^{\frac{G}{2}} |d G|_\phi^2)
    &\geq 2\tr_\phi (d \Dt_\phi G \w d^c G) 
    + \phi^{i\bar{i}} \phi^{j\bar{j}} (\log v(m_{dd^c \phi}))_{i\bar{j}} G_i G_{\bar{j}} + |\nabla^\phi \overline{\nabla^\phi} G|_{\phi}^2 + \frac{1}{2} \Dt_\phi G \cdot |d G|_\phi^2 \\ 
    &\quad + \frac{1}{2} \tr_\phi (d \log v(m_{dd^c \phi}) \w d^c G) |d G|_\phi^2
    + \tr_\phi (d \log v(m_{dd^c \phi}) \w d^c |d G|_{\phi}^2) \\
    &= 2\tr_\phi (d \Dt_\phi G \w d^c G) 
    + \phi^{i\bar{i}} \phi^{j\bar{j}} (\log v(m_{dd^c \phi}))_{i\bar{j}} G_i G_{\bar{j}} + |\nabla^\phi \overline{\nabla^\phi} G|_{\phi}^2 + \frac{1}{2} \Dt_{\phi,v} G \cdot |d G|_\phi^2 \\ 
    &\quad + \tr_\phi (d \log v(m_{dd^c \phi}) \w d^c |d G|_{\phi}^2)
\end{split}
\end{equation}
}%
Under normal coordinates with respect to $\phi_{i\bar{j}}$ at $x_0$ and using \eqref{eq_grdient_prod}, we obtain 
{
\begin{align*}
    \tr_\phi (d \log v(m_{dd^c \phi}) \w d^c |d G|_{\phi}^2) 
    &= \phi^{i\bar{i}}  \Re \lt((\log v(m_{dd^c \phi}))_i [\phi^{j\bar{j}} G_j G_{\bar{j}}]_{\bar{i}}\rt)\\
    &=\phi^{j\bar{j}} \Re
    \Big(G_{\bar{j}} \phi^{i\bar{i}} (\log v (m_{\ddc\phi}))_i G_{j\bar{i}} 
    + G_j \phi^{i\bar{i}} (\log v(m_{\ddc\phi}))_i G_{\bar{j}\bar{i}} \Big),
\end{align*}
}%
  and 
\begin{align*}
 &2\tr_\phi(d G \w d^c [\tr_\phi (d \log v(m_{dd^c \phi}) \w d^c G)]) =  \phi^{i\bar i} \Big( G_i \partial_{\bar i} \Re ( \phi^{k \bar j} (\log v(m_{\ddc\phi}))_k G_{\bar j})  +  G_{\bar i} \partial_{i} \Re ( \phi^{k \bar j} (\log v(m_{\ddc\phi}))_k G_{\bar j})  \Big)  \\
 &= \phi^{i\bar i} G_i \phi^{j \bar j} \Re[\log v(m_{\ddc\phi}))_{j\bar i} G_{\bar j} + (\log v(m_{\ddc\phi}))_{j} G_{\bar j \bar i}  ] +  \phi^{i\bar i} G_{\bar i} \phi^{j \bar j}  \Re[\log v(m_{\ddc\phi}))_{j i} G_{\bar j} + (\log v(m_{\ddc\phi}))_{j} G_{\bar j  i}  ], 
\end{align*}
hence
\begin{align*}
\tr_\phi (d \log v(m_{dd^c \phi}) \w d^c |d G|_{\phi}^2) &= 2\tr_\phi(d G \w d^c [\tr_\phi (d \log v(m_{dd^c \phi}) \w d^c G)])
   \\
   &\quad - \Re\lt( \phi^{i\bar{i}}\phi^{j\bar{j}} (\log v(m_{dd^c \phi}))_{ij} G_{\bar{i}} G_{\bar{j}}\rt) 
    - \phi^{i\bar{i}}\phi^{j\bar{j}} (\log v(m_{dd^c \phi}))_{i\bar{j}} G_{\bar{i}} G_j.
\end{align*}
Therefore, the inequality \eqref{appxB_first_term_1} can be expressed as
{\small
\begin{equation}\label{appxB_first_term_2}
\begin{split}
    e^{-\frac{G}{2}} \Dt_{\phi,v} (e^{\frac{G}{2}} |d G|_\phi^2)
    &\geq 2 \tr_\phi (d \Dt_{\phi,v} G \w d^c G) 
    + |\nabla^\phi \overline{\nabla^\phi} G|_{\phi}^2 + \frac{1}{2} \Dt_{\phi,v} G \cdot |\nabla^\phi G|_\phi^2 \\ 
    &\quad - \Re\lt(\phi^{i\bar{i}}\phi^{j\bar{j}} (\log v(m_{dd^c \phi}))_{ij} G_{\bar{i}} G_{\bar{j}}\rt).
\end{split}
\end{equation} }
Recall that 
\[
    d m^{\xi_\af}_{dd^c \phi} 
    = - i_{\xi_\af} dd^c \phi 
    = - i_{\xi_\af} (2 \iii \phi_{p\bar{q}} dz^p \w d\bar{z}^q) 
    = - 2 \iii \phi_{p\bar{q}}(\xi_\af')^p d\bar{z}^q + 2 \iii \phi_{p\bar{q}}(\xi_\af'')^{\bar{q}} dz^p.
\]
Hence,  {\small
\begin{align}
\label{eq:appxB_D2log}
    &\pl_{ij}^2 \log v(m_{dd^c \phi}) 
    = \pl_i \lt\{\sum_\af  (\pl_\af \log v)(m_{\ddc\phi}) \cdot 2\iii \phi_{j\bar{l}} (\xi''_\af)^{\bar{l}}\rt\} \\
    &= 4\sum_{\af,\bt} (\pl_{\af\bt}^2 \log v)(m_{\ddc\phi}) \cdot [-\phi_{i \bar{q}} \phi_{j \bar{l}} (\xi''_\bt)^{\bar{q}} (\xi''_\af)^{\bar{l}}]  
    + 2\sum_\af (\pl_\af \log v)(m_{\ddc\phi}) \cdot \iii [\phi_{j\bar{l}i} (\xi''_\af)^{\bar{l}} + \phi_{j\bar{l}} (\xi''_\af)^{\bar{l}}_i]. \nonumber
\end{align} }
Note that $\xi'_{\af}$ is a holomorphic vector field for any $\af$. 
Therefore, $(\xi'_\af)^j_{\bar{i}} = 0 = (\xi''_\af)^{\bar{j}}_i$ for any $i,j \in \{1, \cdots, n\}$. 

\smallskip
Combining \eqref{appxB_first_term_2}, \eqref{eq:appxB_D2log}, and the fact that $(\xi''_\af)^{\bar{l}}_i = 0$, under normal coordinates with respect to $\phi$ at $x_0$, we get {\small
\begin{align*}
    e^{-\frac{G}{2}} \Dt_{\phi,v} (e^{\frac{G}{2}} |d G|_\phi^2)
    &\geq 2\tr_\phi (d \Dt_{\phi,v} G \w d^c G) 
    + |\nabla^\phi \overline{\nabla^\phi} G|_{\phi}^2 + \frac{1}{2} \Dt_{\phi,v} G \cdot |\nabla^\phi G|_\phi^2 \\ 
    &\quad + 4\Re \sum_{\af,\bt} (\pl_{\af\bt}^2 \log v)(m_{dd^c \phi}) (\xi''_\bt)^{\bar{i}} (\xi''_\af)^{\bar{j}} G_{\bar{i}} G_{\bar{j}} \\
    &\geq 2\tr_\phi (d \Dt_{\phi,v} G \w d^c G) 
    + |\nabla^\phi \overline{\nabla^\phi} G|_{\phi}^2 + \frac{1}{2} \Dt_{\phi,v} G \cdot |\nabla^\phi G|_\phi^2 
    - 4 C_v^4 \max_\af |\xi_\af''.G|^2\\ 
    &\geq 2 \tr_\phi (d \Dt_{\phi,v} G \w d^c G) 
    + |\nabla^\phi \overline{\nabla^\phi} G|_{\phi}^2 + \frac{1}{2} \Dt_{\phi,v} G \cdot |\nabla^\phi G|_\phi^2 - 4 C_v C_\xi |d G|_{\om}^2 
\end{align*} }
where $C_\xi > 0$ is a constant such that $|\xi|^2_{\om} \leq C_\xi$. 
Using the second equation $\Dt_{\phi,v} G = - w(m_{dd^c \phi})$, we then derive   
\begin{align*}
    \tr_\phi(d \Dt_{\phi,v} G \w d^c G)
    &= -\tr_\phi(d w(m_{dd^c \phi}) \w d^c G) 
    = - \sum_\af w_\af(m_{dd^c \phi}) \tr_\phi(d m_{dd^c \phi}^{\xi_\af} \w d^c G) \\
    &= \sum_\af w_\af(m_{dd^c \phi}) 
    \Re (\xi'_\af)^i G_i
    \geq -C_\xi C_w |d G|_{\om}\\
    &\geq -C_1 -|d G|^2_{\om} \geq - C_1 - (\Dt\phi) \cdot |d G|_{\phi}^2
\end{align*}
where $C_1 >0$ depending only on $C_\xi, C_v$. 
Then
\begin{equation}\label{eq:appxB_grad_G}
    e^{-\frac{G}{2}} \Dt_{\phi,v} (e^{\frac{G}{2}} |d G|_\phi^2)
    \geq - 2C_1 - C_2 (\Dt \phi) |d G|_\phi^2 + |\nabla^\phi \overline{\nabla^\phi} G|_\phi^2
\end{equation}
where $C_2 > 0$ depend only on $C_\xi, C_v, C_w$.

\smallskip
Next, we compute under normal coordinates with respect to $\omega$, chosen so that $\phi_{i\bar{j}}$ is diagonal. 
We obtain  
\begin{align*}
\Delta (G - \log v(m_{dd^c \phi})) &= \partial_i \partial_{\bar i} (\log \det  (\phi_{k\bar \ell})) \\
&= \Delta_{\phi} (\Delta \phi) - \frac{|\phi_{ p\bar q i} |^2}{\phi_{p\bar p} \phi_{q\bar q}}\leq    \Delta_{\phi} (\Delta \phi),
\end{align*}
and therefore, 
\begingroup
\allowdisplaybreaks {\small
\begin{align*}
    \Dt_{\phi,v} (\Dt \phi)
    &= \Dt_\phi (\Dt \phi) + \tr_\phi(d \log v(m_{dd^c \phi}) \w d^c \Dt \phi) \\
    &\geq \Dt G - \Dt \log v(m_{dd^c \phi}) + \tr_\phi (d \log v(m_{dd^c \phi}) \w d^c \Dt \phi) \\
    &\geq \Dt G - \sum_{\af,\bt} (\pl_{\af\bt}^2 \log v)(m_{dd^c \phi}) \langle d m_{dd^c \phi}^{\xi_\af}, d m_{dd^c \phi}^{\xi_\bt} \rangle - C_3 \Dt \phi\\
    &\geq \Dt G 
    - 2 C_v^2 (\Dt \phi)^2 
    - C_3 \Dt \phi, 
\end{align*} }
\endgroup
where $C_3$ depend only on $n$ and a constant $C>0$ such that $-C \om \leq \CL_{\xi_\af} \om \leq C \om$. 
The fourth line follows from the same estimate \eqref{eq:appx_tr_dlog_dtr}.

\smallskip
Then we obtain
\begin{align*}
    \Dt_{\phi,v} (e^{\frac{G}{2}} |d G|_\phi^2 + K \Dt \phi)
    &\geq -C_1 e^{\frac{G}{2}} - C_2 (\Dt \phi) e^{\frac{G}{2}} |d G|_\phi^2 
    + e^{\frac{G}{2}} |\nabla^\phi \overline{\nabla^\phi} G|_\phi^2 
    + K \Dt G
    - 2C_v^2 (\Dt \phi)^2 - C_3 \Dt \phi.
\end{align*}
Recall that $|\nabla^\phi \overline{\nabla^\phi} G|_\phi^2 = \sum_{i,j} \phi^{i\bar{i}} \phi^{j\bar{j}} |G_{i\bar{j}}|^2$ and $\|G\|_{L^\infty}\leq C_4$. 
By Cauchy--Schwarz inequality, we have the following estimate {\small
\begin{align*}
   e^{\frac{G}{2}} |\nabla^\phi \overline{\nabla^\phi} G|_\phi^2 +  e^{C_4}\frac{K^2}{4} (\Dt \phi)^2 
    \geq   e^{-C_4}\sum_{i,j} \phi^{i\bar{i}} \phi^{j\bar{j}} |G_{i\bar{j}}|^2 + e^{C_4}\frac{K^2}{4} \sum_{i,j} \phi_{i\bar{i}} \phi_{j\bar{j}} 
    \geq \sum_{i,j} K |G_{i\bar{j}}| 
    \geq K \sum_{i} |G_{i\bar{i}}| \geq K |\Dt G|.
\end{align*} }
Moreover, since $\Dt \phi \geq c$; hence, we derive the following estimate
\begin{align*}
\Dt_{\phi,v}u &=\Dt_{\phi,v} (e^{\frac{G}{2}} |d G|_\phi^2 + K \Dt \phi)
    \geq -C_1 e^{\frac{G}{2}} - C_2 (\Dt \phi) e^{\frac{G}{2}} |d G|_\phi^2 
      -C_5(\Delta \phi)^2-C_3 \Delta \phi\\
    &\geq   -C_2 \Delta\phi (e^{\frac{G}{2}} |d G|_\phi^2   )  -C_6 (\Delta\phi)^2  \quad \text{ ($C_6$ depending on $c, C_1, C_3, \|G\|_{L^\infty}$)}\\
    &\geq -C_2(\Delta\phi ) \left(e^{\frac{G}{2}} |d G|_\phi^2 + K \Dt \phi\right)
    = -C_2 \cdot (\Delta \phi) \cdot u.
\end{align*}
for $K =C_6 C_2^{-1}$ which only depends on $C_v, C_w$ and $\|G\|_{L^\infty(B_1(0))}$.
\end{proof}

Using Lemma~\ref{lem:appxB}, one can obtain Theorem~\ref{thm_local_higher_est} following an argument in Chen--Cheng's article \cite{Chen_Cheng_2021_1}: 

\begin{proof}[Proof of Theorem~\ref{thm_local_higher_est}]

\smallskip
Let $\eta$ be a positive smooth function with compact support in $B_1(0)$, $\eta \equiv 1$ on $B_{3/4}(0)$ and $0 \leq \eta \leq 1$. 
We first claim that there is a positive constant 
\[
    D_1 = D_1(n, \|G\|_{L^\infty(B_1(0))}, v, w, \|\Dt\phi\|_{L^{n+1}(B_1(0))}, C_\xi, C_\eta)
\] 
such that $\||dG|_\phi^2\|_{L^1(B_{3/4}(0))} \leq D_1$, where $C_\eta > 0$ is a constant such that $|d \eta|_{\om}, |\ddc \eta|_{\om} \leq C_\eta$ and $C_\xi > 0$ is a constant such that $|\xi_\af|_{\om} \leq C_\xi$.

\smallskip
To see the claim, set $C_G > 0$ a constant such that $|G| \leq C_G$ and $e^{-G} \leq C_G$ on $B_1(0)$.
Then we have 
\begingroup
\allowdisplaybreaks
{\small
\begin{align*}
    &\int_{B_{3/4}(0)} |dG|_\phi^2 \om^n
    \leq \int_{B_1(0)} \eta |dG|_\phi^2 e^{-G} v(m_{\ddc\phi}) (\ddc\phi)^n
    \leq C_G C_v \int_{B_1(0)} \eta dG \w \dc G \w (\ddc\phi)^{n-1} \\
    &= C_G C_v \int_{B_1(0)} - \eta G \ddc G \w (\ddc\phi)^{n-1}
    - G d \eta \w \dc G \w (\ddc \phi)^{n-1} \\
    &= C_G C_v \int_{B_1(0)} \eta G n [w(m_{\ddc\phi}) (\ddc \phi)^n + d \log v(m_{\ddc\phi}) \w \dc G \w (\ddc\phi)^{n-1}]  \\
    &\qquad + \frac{1}{2} C_G C_v \int_{B_1(0)} G^2 \ddc \eta \w (\ddc \phi)^{n-1}\\
    &\leq n e^{C_G} C_G^2 C_v^2 C_w \int_{B_1(0)} \om^n 
    + \frac{1}{2} C_G^3 C_v C_\eta \int_{B_1(0)} (\Dt \phi)^{n-1} \om^n \\
    &\qquad - \frac{1}{2} n C_G C_v \lt[\int_{B_1(0)} G^2 \eta \ddc \log v(m_{\ddc \phi}) \w (\ddc \phi)^{n-1} + G^2 d \log v(m_{\ddc\phi}) \w \dc \eta \w (\ddc\phi)^{n-1}\rt]\\
    &\leq n e^{C_G} C_G^2 C_v^2 C_w \int_{B_1(0)} \om^n 
    + \frac{1}{2} C_G^3 C_v C_\eta \int_{B_1(0)} (\Dt \phi)^{n-1} \om^n 
     + \frac{1}{2} n C_G C_v \lt[\int_{B_1(0)} e^{C_G} C_G^2 C_v^3 C_\xi^2 (\Delta \phi) \om^n 
    + e^{C_G} C_G^2 C_v^3 C_\xi C_\eta \om^n\rt] \\
    &\leq D_1.
\end{align*}
}%
\endgroup
Hence, the claim follows. 
We provide more details about the inequality next to the last one. Since 
{\small
\[
    \partial_{\bar j}\log v(m_{dd^c \phi}) 
    =  \frac{1}{v(m_{dd^c \phi})} \sum_\af v_\af(m_{dd^c \phi}) (-2\iii\phi_{k\bar{j}} (\xi_\af')^{k}),
\] }
we have $\tr_{\phi} (d \log v(m_{\ddc\phi}) \w \dc \eta) \leq C_v^2 C_\xi C_\eta$; thus, {\small
\[
    d \log v(m_{\ddc\phi}) \w \dc \eta \w (\ddc \phi)^{n-1}
    \leq C_v^2 C_\xi C_\eta (\ddc\phi)^n 
    = C_v^2 C_\xi C_\eta \frac{e^G}{v(m_{\ddc\phi})} \om^n
    \leq e^{C_G} C_v^3 C_\xi C_\eta \om^n.
\] }
On the other hand, for the term involving $dd^c \log v(m_{dd^c \phi})$, 
\begin{align*}
    &\pl_{i\bar{j}}^2 \log v(m_{dd^c \phi}) 
    = \pl_i \lt\{\sum_\af (\pl_\af \log v)(m_{dd^c \phi}) (-2\iii\phi_{k\bar{j}} (\xi_\af')^{k})\rt\} \\
    &= 4\sum_{\af,\bt} (\pl_{\af\bt}^2 \log v)(m_{dd^c \phi}) \phi_{i \bar{q}} \phi_{k \bar{j}} (\xi'_\bt)^{\bar{q}} (\xi_\af')^{k}
    + 2\sum_\af (\pl_\af \log v)(m_{dd^c \phi}) (-\iii) [\phi_{k\bar{j}i} (\xi'_\af)^{k} + \phi_{k\bar{j}} (\xi'_\af)^{k}_i].
\end{align*}
Note that $\CL_{\xi_\af} dd^c \phi = 0$ from the $G$-invariant assumption. 
Therefore, {\small
\begin{align*}
    0 &= \frac{1}{2} \CL_\xi dd^c \phi 
    = \frac{1}{2} d (i_{\xi} dd^c \phi) 
    = d \lt(\iii \phi_{k\bar{l}} (\xi')^k d\bar{z}^l 
    - \iii \phi_{k\bar{l}} (\xi'')^{\bar{l}} dz^k\rt) \\
    &= \lt(\iii \phi_{k\bar{l}} (\xi')^k_i d z^i \w d \bar{z}^l - \iii \phi_{k\bar{l}} (\xi'')^{\bar{l}}_{\bar{j}} d \bar{z}^j \w d z^k\rt) 
    -\iii \phi_{k\bar{l}}(\xi'')^{\bar{l}}_i dz^i \w dz^k 
    + \iii \phi_{k\bar{l}} (\xi')^k_{\bar{j}} d\bar{z}^j \w d\bar{z}^l \\
    &= \iii \lt(\phi_{\gm \bar{\bt}} (\xi')^\gm_\af + \phi_{\af \bar{\dt}} (\xi'')^{\bar{\dt}}_{\bar{\bt}} \rt) dz^\af \w d\bar{z}^\bt
    -\iii \phi_{k\bar{l}}(\xi'')^{\bar{l}}_i dz^i \w dz^k
    + \iii \phi_{k\bar{l}} (\xi')^k_{\bar{j}} d\bar{z}^j \w d\bar{z}^l
\end{align*} }
and this implies $ \phi_{\gm \bar{\bt}} (\xi')^\gm_\af + \phi_{\af \bar{\dt}} (\xi'')^{\bar{\dt}}_{\bar{\bt}} = 0$. 
In the normal coordinates of $dd^c\phi$, we get
\begin{align*}
    \phi^{i\bar j} \phi_{k\bar{j}} (\xi'_\af)^{k}_i
    = \frac{1}{2} \lt(\phi^{i\bar j } \phi_{k\bar{j}} (\xi'_\af)^{k}_i +\overline{\phi^{i\bar j} \phi_{\ell\bar{j}} (\xi'_\af)^{\ell}_i} \rt)
    = \frac{1}{2} \phi^{i\bar j } \lt(\phi_{k\bar{j}} (\xi'_\af)^{k}_i + \phi_{i\bar \ell} (\xi''_\af)^{\bar \ell}_{\bar j}\rt) = 0.
\end{align*}
This yields that $\tr_{\phi}dd^c \log v(m_{dd^c \phi}) \leq C_v^2 C_\xi^2 (\Delta \phi)$ and 
\[
    \ddc \log v(m_{\ddc\phi}) \w (\ddc\phi)^{n-1}
    \leq C_v^2 C_\xi^2 (\Delta \phi) (\ddc\phi)^n
    \leq e^{C_G} C_v^3 C_\xi^2 (\Delta \phi) \om^n.
\]

\smallskip
We also remark that 
$\Delta_{\phi,v} f = \frac{1}{2v(m_{dd^c \phi})} \partial_\af(v(m_{dd^c \phi}) g_\phi^{\af\bt}\partial_{\bt} f)$ where $\af, \bt$ are real coordinates and $[(g_\phi)_{\af\bt}]_{1 \leq \af, \bt \leq 2n}$ is the Riemannian metric associate to $\ddc\phi$. 
Therefore, it follows from Lemma~\ref{lem:appxB}, that
\[
    \pl_\af(v(m_{\ddc\phi}) g_\phi^{\af\bt} \pl_\bt u) \geq -2 C v(m_{\ddc\phi}) (\Dt\phi) u.
\]
Using \cite[Lem.~6.3, arXiv version]{Chen_Cheng_2021_1}), we derive that 
\begin{equation}\label{eq:appxB_CC_6.3_2}
    \|u\|_{L^\infty( B_{1/2}(0))} \leq D_2 (\|u\|_{L^1(B_{3/4}(0))}+1)\leq C,
\end{equation}
where $D_2$ depends on $C_v$, $C_w$, $p$, $\|S\|_{L^\infty}$, $\|\Delta \phi\|_{L^p (B_1(0))}$, $\|\tr_\phi \om\|_{L^p(B_1(0))}$. 
Recall that $u = e^{\frac{G}{2}} |d G|_\phi^2 + K \Dt \phi$; hence we get $$\|u\|_{L^1(B_{3/4}(0))}\leq e^{C_G/2}\||dG|^2_\phi\|_{L^1(B_{3/4}(0))} + K \| \Delta \phi\|_{L^1(B_{3/4}(0))}\leq e^{C_G/2}D_1+  K \| \Delta \phi\|_{L^1(B_{3/4}(0))}.$$
Then combining this with \eqref{eq:appxB_CC_6.3_2}  implies the uniform $L^\infty$-estimates of $|dG|_{\phi}$ and $\Delta \phi$ on $B_{1/2}(0)$ which only depend on $C_v$, $C_w$, $p$, $\|S\|_{L^\infty}$, $\|\Delta \phi\|_{L^p (B_1(0))}$, $\|\tr_\phi \om\|_{L^p(B_1(0))}$. 
Then the standard Evans--Krylov estimate and bootstrapping argument imply higher order estimates on  $B_{1/2}(0)$ for $\phi, G$ which depend on $C_v$, $C_w$, $p$, $\|S\|_{L^\infty}$, $\|\Delta \phi\|_{L^p (B_1(0))}$, $\|\tr_\phi \om\|_{L^p(B_1(0))}$.
\end{proof}

\bibliographystyle{smfalpha_new}
\bibliography{biblio}
\end{document}